\theoremstyle{plain}
\newtheorem{theorem}{Theorem}[section]
\newtheorem{lemma}{Lemma}[section]
\newtheorem{remark}{Remark}[section]
\newtheorem{proposition}{Proposition}[section]
\newif \ifLastSection \LastSectionfalse
\numberwithin{equation}{section}
\begin{document}

\title{{\bf {Convergence to nonlinear diffusion waves for solutions of $M_1$ model}}}

\author[1]{Nangao Zhang}
\author[2]{Changjiang Zhu\thanks{Corresponding author. \authorcr Email addresses: mazhangngmath@mail.scut.edu.cn
 (zhang), machjzhu@scut.edu.cn (zhu)
.}}
\affil[1,2]{ \normalsize  School of Mathematics, South China University of Technology, Guangzhou 510641, P.R. China}

\date{}

\maketitle

\textbf{{\bf Abstract:}} In this paper, we are concerned with the asymptotic behavior of solutions of $M_1$ model proposed in the radiative transfer fields.
Starting from this model, combined with the compressible Euler equation with damping, we introduce a more general system.
We rigorously prove that the solutions to the Cauchy problem of this system globally exist and time-asymptotically converge to the shifted nonlinear diffusion waves whose profile is self-similar solution to the corresponding parabolic equation governed by the classical Darcy's law.
Moreover, the optimal convergence rates are also obtained.
Compared with previous results obtained by Nishihara, Wang and Yang in \cite{Nishihara-Wang-Yang2000}, we have a weaker and more general condition on the initial data, and the conclusions are more sharper. The approach adopted in the paper is the technical time-weighted energy estimates with the  Green function method together.

\bigbreak \textbf{{\bf Key Words}:} $M_1$ model, Darcy's law, nonlinear diffusion waves, time-weighted energy estimates, Green function method.

\bigbreak \textbf{{\bf AMS Subject Classification}:} 85A25, 35L65, 35B40.

\section{Introduction and main results}\label{S1}

Radiative transfer has a huge influence on the hydrodynamic flow in applications such as superorbital atmospheric re-entry, fires or astrophysics. In such regimes, it is important to have a good prediction of the radiative variables. However, solving the full radiative transfer equation is too expensive. It is hence necessary to develop other models for radiative that are cheap, yet accurate enough to give good predictions of the radiative effects. In this case, the $M_{1}$ model is an interesting choice (cf. \cite{Berthon-Charrier2003, Berthon-Dubois2010}). In the present paper, we just consider the scattering part and we omit the role played by the temperature, then the corresponding simplified model reads as follows (cf. \cite{Berthon-Charrier-Dubroca2007, Goudon-Lin2013}):
\begin{equation}\label{1.1a}
\left\{\begin{array}{l}
\partial_{t}\rho+c\nabla \cdot (\rho u)=0,\\[2mm]
  \partial_{t}(\rho u)+ c\nabla \cdot P(\rho, u)=-c\sigma\rho u.
 \end{array}
        \right.
\end{equation}
Here, the unknown function $\rho=\rho(x,t) \geq 0$ denotes the radiative energy, and $u=u(x,t) \in \mathbb{R}^{n} (1\leq n\leq3)$ denotes the normalized radiative flux. The positive constants $c$ and $\sigma$ denote the speed of the light and the opacity respectively.
Concerning the radiative pressure $P(\rho, u)$, it is given by
\begin{equation}\label{1.2a}
P(\rho, u)=\frac{1}{2}\left((1-\chi(u)) \mathbb{I}_{n}+(3 \chi(u)-1) \frac{u \otimes u}{|u|^{2}}\right) \rho,
\end{equation}
with
\begin{equation}\label{1.3a}
  \chi(u)=\frac{3+4 |u|^{2}}{5+2 \sqrt{4-3|u|^{2}}},
\end{equation}
 where $\mathbb{I}_{n}$ is the identity matrix of order $n$ and $\left|u\right| \leq 1$.

In this paper, we shall restrict ourselves to the one-dimensional case. We set $c=1$ without loss of generality, then \eqref{1.1a} can be rewritten as
\begin{equation}\label{1.4a}
\left\{\begin{array}{l}
 \rho_{t} +\left(\rho u \right)_{x}=0,\\[2mm]
 \displaystyle \left(\rho u \right)_{t}+\left(\frac{\rho}{3}\right)_{x}+\left(\frac{2\rho u^{2}}{2+\sqrt{4-3u^{2}}} \right)_{x}=-\sigma \rho u,
 \end{array}
        \right.
\end{equation}
with the following initial data and the far field behaviors
\begin{equation}\label{1.5a}
 (\rho,u)|_{t=0}=(\rho_0,u_0)(x) \rightarrow (\rho_{\pm}, u_{\pm}), \quad \mbox{as} \quad x \rightarrow \pm \infty \quad \mbox{with} \quad \rho_{+} \neq \rho_{-},
\end{equation}
where $\rho_{\pm}$ and $u_{\pm}$ are the constant states.

We are interested in the large time behavior of solutions to the Cauchy problem \eqref{1.4a}-\eqref{1.5a}. Suppose that $\rho \geq C > 0$, then it is more convenient to use the Lagrangian coordinates to explore this system. We consider the coordinate transformation as follows:
\begin{equation}\notag
 x \Rightarrow \int_{(0,0)}^{(x, t)} \rho(y, s) \mathrm{d} y-(\rho u)(y, s) \mathrm{d} s, \quad t \Rightarrow \tau ,
\end{equation}
 and we still denote the Lagrangian coordinates by $(x, t)$ for simplicity.

 Let $v=\frac{1}{\rho}$, then the Cauchy problem \eqref{1.4a}-\eqref{1.5a} can be transformed as the following form
\begin{equation}\label{1.7a}
 \left\{\begin{array}{l}
 v_{t}-u_{x}=0,\\[2mm]
\displaystyle  u_{t}+\left(\frac{1}{3v} \right)_{x}-\left(\frac{u^{2}\sqrt{4-3u^{2}}}{v(2+\sqrt{4-3u^{2}})} \right)_{x}=- \sigma u,
 \end{array}
        \right.
\end{equation}
with initial data
\begin{equation}\label{1.8a}
  (v,u)|_{t=0}=(v_0,u_0)(x) \rightarrow (v_{\pm}, u_{\pm}), \quad \mbox{as} \quad x \rightarrow \pm \infty \quad \mbox{with} \quad v_{+} \neq v_{-}.
\end{equation}

Due to its complexity, the study on \eqref{1.1a} is quite limited and far from being well. The global existence of smooth solutions with small initial data has been studied by many authors (see \cite{Goudon-Lin2013,Kawashima1983,Li-Peng-Zhao2021,Nishida1978} and references therein), and some numerical methods are also considered in \cite{Berthon-Charrier-Dubroca2007,Cordier-Buet2004}. However, there are very few studies on the large time behavior of solutions to $M_{1}$ model, to the best of our knowledge. In physics, the damping effects usually causes the dynamical system to possess the nonlinear diffusive phenomena, such interesting phenomena for 1-D compressible Euler equations with damping was firstly observed by Hsiao and Liu in \cite{Hsiao-Liu1992}. Here we are also mainly concerned with the nonlinear diffusive phenomena of \eqref{1.7a}-\eqref{1.8a}.

Considering the relationship between \eqref{1.7a} and the compressible Euler equations with damping, we expect to obtain more general results including these two systems. So in this paper, we prefer to consider the following more general system
\begin{equation}\label{1.1}
 \left\{\begin{array}{l}
 v_{t}-u_{x}=0,\\[2mm]
  u_{t}+p(v)_{x}=-\alpha u+(g(u)f(v))_{x}, \quad (x,t) \in \mathbb{R} \times \mathbb{R}^{+},
 \end{array}
        \right.
\end{equation}
with initial data
\begin{equation}\label{1.2}
  (v,u)|_{t=0}=(v_0,u_0)(x) \rightarrow (v_{\pm}, u_{\pm}), \quad \mbox{as} \quad x \rightarrow \pm \infty \quad \mbox{and}~~v_{+} \neq v_{-} .
\end{equation}
 Here $u=u(x,t) ~\mbox{and}~ v=v(x,t)>0:\mathbb{R}\times (0,\infty) \rightarrow \mathbb{R}$ are unknown variables, $p$ is a smooth function of $v$ with $p>0$, $g$ and $f$ are smooth function of $u$ and $v$, respectively. $v_0(x)$ and $u_0(x)$ are the given initial data, damping constant $\alpha > 0$, $v_{\pm}>0$ and $u_{\pm}$ are constants.

Let us recall some known results about the above system now. For $(g(u)f(v))_{x}\equiv 0$, the system \eqref{1.1} becomes the compressible Euler equations with linear damping
\begin{equation}\label{1.3}
 \left\{\begin{array}{l}
 v_{t}-u_{x}=0,\\[2mm]
  u_{t}+p(v)_{x}=-\alpha u,\quad (x,t) \in \mathbb{R} \times \mathbb{R}^{+}.
 \end{array}
        \right.
\end{equation}
The global existence and asymptotic behavior of the solutions to the Cauchy problem for \eqref{1.3} has been extensively studied (see \cite{Fang-Xu2009, Jiu-Zheng2012, Kawashima1983, Wang-Yang2001} and references therein). Among them, Hsiao and Liu in \cite{Hsiao-Liu1992} firstly showed the solutions $(v,u)$ of \eqref{1.3} tended time-asymptotically to the nonlinear diffusion waves $(\bar{v},\bar{u})$ of the system
\begin{equation}\notag
 \left\{\begin{array}{l}
 \bar{v}_{t}=-\frac{1}{\alpha}p(\bar{v})_{xx},\\[2mm]
  p(\bar{v})_{x}=-\alpha \bar{u},
 \end{array}
   \right.	
\end{equation}
in the sense
 \begin{equation}\notag
 \left\|(v-\bar{v}, u-\bar{u})(t) \right\|_{L^{\infty}} \leq C(t^{-\frac{1}{2}},t^{-\frac{1}{2}}),	
 \end{equation}
 when initial perturbation is small belonging to $H^{3} \times H^{2}$. Then, by taking more detailed but elegant energy estimates, Nishihara in \cite{Nishihara1996} successfully improved the convergence rates as
\begin{equation}\notag
 \left\|(v-\bar{v}, u-\bar{u})(t) \right\|_{L^{\infty}} \leq C(t^{-\frac{3}{4}},t^{-\frac{5}{4}}),	
 \end{equation}
 provided that small initial perturbation belongs to $H^{3} \times H^{2}$. Subsequently, when the small initial perturbation belonged to $(H^{3} \cap L^{1}) \times (H^{2} \cap L^{1})$, by constructing an appropriate approximate Green's function and using energy methods, Nishihara, Wang and Yang in \cite{Nishihara-Wang-Yang2000} further improved the convergence rates as
  \begin{equation}\notag
 \left\|(v-\bar{v}, u-\bar{u})(t) \right\|_{L^{\infty}} \leq C(t^{-1},t^{-\frac{3}{2}}),	
 \end{equation}
 which is optimal in the sense comparing with the heat equation.
 These conclusions require that both the initial disturbance and the wave strength around a particular diffusion wave are suitably small, some of these restrictions were later partially relaxed by Zhao in \cite{Zhao2001}. Later, Mei in \cite{Mei2010} pointed out that the best asymptotic profiles are the solutions for the corresponding nonlinear diffusion equation with some specific selected initial data, and obtained that the convergence rates to the profile is in the form of
\begin{equation}\notag
 \left\|(v-\bar{v}, u-\bar{u})(t) \right\|_{L^{\infty}} \leq C(t^{-\frac{3}{2}}\ln t,t^{-2}\ln t),	
 \end{equation}
 provided that small initial perturbation belongs to $(H^{3} \cap L^{1}) \times (H^{2} \cap L^{1})$.
 For other studies related to \eqref{1.3} with nonlinear damping or vacuum, and so on, we can refer to these interesting works (\cite{Hsiao-Liu1993, Hsiao-Luo1996, Hsiao-Tang1995, Huang-Marcati-Pan2005, Huang-Pan2003, Huang-Pan2006, Huang-Pan-Wang2011, Marcati-Mei-Rubino, Marcati-Nishihara, Mei2009, Nishihara2003, Zhu2003, Zhu-Jiang2006}) and references therein.

 When $\alpha=\sigma$, $p(v)=\frac{1}{3v}, g(u)=\frac{u^{2}\sqrt{4-3u^{2}}}{2+\sqrt{4-3u^{2}}}$ and $f(v)=\frac{1}{v}$, the system \eqref{1.1} can be reduced to $M_{1}$ model \eqref{1.7a} which we will study in the following.
Inspired by these preceding results, in the present paper, we will discuss the convergence to nonlinear diffusion waves for solutions of \eqref{1.1}-\eqref{1.2}, and we will obtain a sharper result which indeed improves those in Nishihara, Wang and Yang \cite{Nishihara-Wang-Yang2000} (See Remark \ref{R1}-\ref{R2}).

As in \cite{Hsiao-Liu1992,Nishihara1996}, the solutions of \eqref{1.1} time-asymptotically behave as those of Darcy's law
\begin{equation}\label{1.5}
 \left\{\begin{array}{l}
\bar{v}_{t}-\bar{u}_{x}=0,\\[2mm]
p(\bar{v})_{x}=- \alpha\bar{u},
 \end{array}
        \right.
\end{equation}
or
\begin{equation}\label{1.6}
 \left\{\begin{array}{l}
\bar{v}_{t}=-\frac{1}{\alpha}p(\bar{v})_{xx},\\[2mm]
p(\bar{v})_{x}=- \bar{u},
 \end{array}
        \right.
\end{equation}
with
\begin{equation}\label{1.7}
 (\bar{v},\bar{u})(x,t) \rightarrow (v_{\pm}, 0), \quad \mbox{as} \quad x \rightarrow \pm \infty.
\end{equation}
From $\eqref{1.1}_{1}$ and $\eqref{1.5}_{1}$, we have
\begin{equation}\label{1.8}
 (v-\bar{v})_{t}-(u-\bar{u})_{x}=0.
\end{equation}
Hinted by $\eqref{1.1}_{2}$, we suppose
\begin{equation}\label{1.9}
  u(x,t) \rightarrow {\rm e}^{-\alpha t}u_{\pm}  \quad \mbox{as} \quad x \rightarrow \pm \infty.
\end{equation}
Integrating \eqref{1.8} with respect to $x$, noting \eqref{1.9}, we obtain
\begin{equation}\label{1.10}
 \frac{{\rm d}}{{\rm d}t} \int_{-\infty}^{\infty} (v-\bar{v}) {\rm d}x ={\rm e}^{-\alpha t}(u_{+}-u_{-})=\frac{{\rm d}}{{\rm d}t}\left(\frac{u_{+}-u_{-}}{-\alpha}{\rm e}^{-\alpha t}  \cdot 1 \right),
\end{equation}
and hence
\begin{equation}\label{1.11}
 \frac{{\rm d}}{{\rm d}t} \int_{-\infty}^{\infty} \left[v(x,t)-\bar{v}(x+x_{0},t)-\frac{u_{+}-u_{-}}{-\alpha}{\rm e}^{-\alpha t}m_{0}(x)\right] {\rm d}x =0,
\end{equation}
 where $m_{0} \in C_{0}^{\infty}(\mathbb{R})$ satisfies
\begin{equation}\notag
  \int_{-\infty}^{\infty} m_{0}(x) {\rm d}x=1.
\end{equation}
Integrating \eqref{1.11} with respect to $t$, we obtain
\begin{equation}\label{1.12}
\begin{split}
 \int_{-\infty}^{\infty}& \left[v(x,t)-\bar{v}(x+x_{0},t)-\frac{u_{+}-u_{-}}{-\alpha}{\rm e}^{-\alpha t}m_{0}(x)\right] {\rm d}x\\
 &= \int_{-\infty}^{\infty} \left[v_{0}(x)-\bar{v}(x+x_{0},0)-\frac{u_{+}-u_{-}}{-\alpha}m_{0}(x) \right] {\rm d}x :=I(x_{0}).
 \end{split}
\end{equation}
Now, Let's determine $x_{0}$ such that $I(x_{0})=0$. Since
\begin{equation}\label{1.13}
\begin{split}
 I^{\prime}(x_{0})&=\frac{\partial}{\partial x_{0}}\left(\int_{-\infty}^{\infty}\left[v_{0}(x)-\bar{v}(x+x_{0},0)-\frac{u_{+}-u_{-}}{-\alpha}m_{0}(x)\right]\right){\rm d}x\\&
 =-\int_{-\infty}^{\infty}\bar{v}^{\prime}(x+x_{0},0){\rm d}x =-\left[\bar{v}(\infty,0)-\bar{v}(-\infty,0)\right]\\&=-(v_{+}-v_{-}),
\end{split}
\end{equation}
then we can obtain
\begin{equation}\label{1.14}
 I(x_{0})-I(0)=\int_{0}^{x_{0}}I^{\prime}(y){\rm d}y=-(v_{+}-v_{-})x_{0}.
\end{equation}
When $I(x_{0})=0$, we have
\begin{equation}\label{1.15}
 x_{0}=\frac{1}{v_{+}-v_{-}}I(0)=\frac{1}{v_{+}-v_{-}}\int_{-\infty}^{\infty} \left[v_{0}(x)-\bar{v}(x,0)-\frac{u_{+}-u_{-}}{-\alpha}m_{0}(x)\right] {\rm d}x.
\end{equation}
Thus, let's define
\begin{equation}\label{1.16}
  V(x,t)= \int_{-\infty}^{x} \left[v(y,t)-\bar{v}(y+x_{0},t)-\hat{v}(y,t) \right] {\rm d}y,
\end{equation}
with
\begin{equation}\label{1.17}
 \hat{v}(x,t)=\frac{u_{+}-u_{-}}{-\alpha}{\rm e}^{-\alpha t}m_{0}(x).
\end{equation}
Putting
\begin{equation}\label{1.18}
 \hat{u}(x,t)={\rm e}^{-\alpha t}\left[u_{-}+(u_{+}-u_{-})\int_{-\infty}^{x} m_{0}(y) {\rm d}y \right].
\end{equation}
Then one can immediately obtain
\begin{equation}\label{1.19}
 \left\{\begin{array}{l}
\hat{v}_{t}-\hat{u}_{x}=0,\\[2mm]
\hat{u}_{t}=- \alpha\hat{u}.
 \end{array}
        \right.
\end{equation}
Combining \eqref{1.1} and \eqref{1.5}, we get
\begin{equation}\label{1.20}
 \left\{\begin{array}{l}
(v-\bar{v}-\hat{v})_{t}-(u-\bar{u}-\hat{u})_{x}=0,\\[2mm]
(u-\bar{u}-\hat{u})_{t}+\bar{u}_{t}+ \left[p(v)-p(\bar{v})-g(u)f(v) \right]_{x}+\alpha(u-\bar{u}-\hat{u})=0.
 \end{array}
        \right.
\end{equation}
Setting
\begin{equation}\label{1.21}
  z(x,t)=u(x,t)-\bar{u}(x+x_{0},t)-\hat{u}(x,t),
\end{equation}
then from \eqref{1.16} and \eqref{1.21}, \eqref{1.20} can be transformed into
\begin{equation}\label{1.22}
 \left\{\begin{array}{l}
V_{t}-z=0,\\[2mm]
z_{t}+\left(p^\prime(\bar{v})V_{x}\right)_{x}+\alpha z=F_{1}+F_{2},\\[2mm]
(V,z)|_{t=0} :=(V_0,z_0)(x) \rightarrow 0 \quad \mbox{as} \quad x \rightarrow \pm \infty,
 \end{array}
        \right.
\end{equation}
or
\begin{equation}\label{1.23}
 \left\{\begin{array}{l}
V_{tt}+\left(p^\prime(\bar{v})V_{x}\right)_{x}+\alpha V_{t}=F_{1}+F_{2},\\[2mm]
(V,V_{t})|_{t=0} :=(V_0,z_0)(x) \rightarrow 0 \quad \mbox{as} \quad x \rightarrow \pm \infty,
 \end{array}
        \right.
\end{equation}
where
\begin{equation}\label{1.24}
 F_{1}:=\frac{1}{\alpha}p(\bar{v})_{xt}-\left(p(V_{x}+\bar{v}+\hat{v})-p(\bar{v})-p^\prime(\bar{v})V_{x} \right)_{x},
\end{equation}
\begin{equation}\label{1.25}
  F_{2}:= \left(g(z+\bar{u}+\hat{u})f(V_{x}+\bar{v}+\hat{v}) \right)_{x}.
\end{equation}
{\bf Notations.} In the following, $C$ and $c$ denote the generic positive constants depending only on the initial data,  but independent of the time. For any integer $m \geq 0$, we use $H^{m}$ to denote the usual Sobolev space $H^{m}\left(\mathbb{R}\right)$. Set $L^{2}=H^{m}$ when $m = 0$. For simplicity, the norm of $H^{m}$ is denoted by $\|\cdot\|_{m}$ with $\|\cdot\|_{0}=\|\cdot\|$.

In order to state our main result, we assume that the following assumptions hold:
\begin{align}
&\label{1.25a}\inf \limits_{|x|\leq |u_{\pm}|,{\min \left\{v_{+}, v_{-}\right\} \leq y \leq \max \left\{v_{+}, v_{-}\right\}}}\{g(x)f^\prime (y)-p^\prime (y)\}>0,\\
&\label{1.25b}p,f \in C^{3}(\mathbb{R}^{+}),~ p^\prime(v)<0~ \mbox{for}~ \mbox{any}~ v>0, \\
&\label{1.25c}g \in C^{3}(\mathbb{R}),~g(0)=g^\prime (0)=0.
\end{align}
The following are the main result.
\begin{theorem}\label{Thm1}
  Suppose that \eqref{1.25a}-\eqref{1.25c} hold, $\delta:=|v_{+}-v_{-}|+|u_{+}-u_{-}|$ and $\|V_{0}\|_{3}+\|z_{0}\|_{2}$ are sufficiency small, Then, there exists a unique time-global solution $(V, z)(x,t)$ of \eqref{1.22}, which satisfies
  \begin{equation}\notag
   V(x,t) \in C^{k}(0,\infty; H^{3-k}(\mathbb{R})), ~~ k=0,1,2,3, \quad z(x,t) \in C^{k}(0,\infty; H^{2-k}(\mathbb{R})), ~~ k=0,1,2,
  \end{equation}
  and
  \begin{align}
  &\label{1.26}\|\partial_{x}^{k}V(t)\|\leq C(1+t)^{-\frac{k}{2}},\qquad 0\leq k\leq 3,\\	
  &\label{1.27}\|\partial_{x}^{k}z(t)\|\leq C(1+t)^{-\frac{k}{2}-1},\qquad 0\leq k\leq 2,\\
  &\label{1.27a}(1+t)^{2}\|z_{t}(t)\|+(1+t)^{\frac{5}{2}}(\|z_{xt}(t)\|+\|z_{tt}(t)\|)\leq C.
  \end{align}
 Furthermore, under the additional assumption that $(V_{0}+\frac{1}{\alpha}z_{0})(x) \in L^{1}$, then the following improved decay estimates hold
 \begin{align}
&\label{1.28}\|\partial_{x}^{k}V(t)\| \leq C(1+t)^{-\frac{1}{4}-\frac{k}{2}},\quad 0\leq k \leq 3,\\
&\label{1.29}\|\partial_{x}^{k}z(t)\|\leq C(1+t)^{-\frac{1}{4}-\frac{k}{2}-1},\qquad 0\leq k\leq 2,\\
&\label{1.29a}(1+t)^{\frac{9}{4}}\|z_{t}(t)\|+(1+t)^{\frac{11}{4}}(\|z_{xt}(t)\|+\|z_{tt}(t)\|)\leq C.	
\end{align}
\end{theorem}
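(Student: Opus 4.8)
The plan is to follow the classical route for damped hyperbolic systems, but with the two tools promised in the abstract interlocked: local existence, global \emph{a priori} time-weighted energy estimates, a continuation argument, and finally the Green function of the linearized damped-wave operator to upgrade to the optimal rates. First I would establish local well-posedness of the quasilinear problem \eqref{1.22} (equivalently the second-order form \eqref{1.23}) on a short interval $[0,T_0]$ in the class $V\in\cap_{k=0}^{3}C^{k}(0,T_0;H^{3-k})$, $z\in\cap_{k=0}^{2}C^{k}(0,T_0;H^{2-k})$ by a routine iteration/fixed-point scheme, using that \eqref{1.25b} makes the principal part $V_{tt}+(p'(\bar v)V_x)_x$ strictly hyperbolic since $-p'(\bar v)>0$. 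Then I would introduce a solution functional $N(T)$ whose square is the supremum over $0\le t\le T$ of $\sum_{k=0}^{3}(1+t)^{k}\|\partial_x^{k}V(t)\|^{2}+\sum_{k=0}^{2}(1+t)^{k+2}\|\partial_x^{k}z(t)\|^{2}+(1+t)^{4}\|z_t(t)\|^{2}+(1+t)^{5}(\|z_{xt}(t)\|^{2}+\|z_{tt}(t)\|^{2})$, and aim to prove the closed inequality $N(T)^{2}\le C(\delta^{2}+\|V_0\|_3^{2}+\|z_0\|_2^{2})+C\,N(T)^{3}$. Combined with local existence and smallness this yields the global solution and the estimates \eqref{1.26}--\eqref{1.27a}. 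Throughout I would use the standard decay of the diffusion wave $\bar v$ (polynomial decay of $\|\partial_x^{j}\partial_t^{i}(\bar v-v_\pm)\|$ and $\bar v_x=O(\delta)$), the fact that $F_1$ in \eqref{1.24} splits into the fast-decaying source $\frac1\alpha p(\bar v)_{xt}$ plus a remainder quadratic in $V_x$, and that $F_2$ in \eqref{1.25} is genuinely quadratic because $g(0)=g'(0)=0$ by \eqref{1.25c} and $\bar u,\hat u$ are $O(\delta)$; assumption \eqref{1.25a} guarantees the effective sound speed $g(u)f'(v)-p'(v)$ stays bounded away from zero along the flow.

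The energy estimates I would carry out hierarchically on \eqref{1.23}. The basic step multiplies the equation by suitable combinations of $V_t$ and $V$, integrates by parts in $x$, and uses the damping together with the hyperbolicity to get $\|V\|_1^{2}+\|z\|^{2}+\int_0^t(\|z\|^{2}+\|V_x\|^{2})\,ds\le C(\cdots)$. Next, for $\gamma=1,2,3$ and for the $x$-derivatives of \eqref{1.23} up to order $3$, multiply by the time weights $(1+t)^{\gamma}\partial_x^{k}V_t$ and $(1+t)^{\gamma}\partial_x^{k}V$, so that the dissipative term $(1+t)^{\gamma}\|\partial_x^{k}z\|^{2}=(1+t)^{\gamma}\|\partial_x^{k}V_t\|^{2}$ appears with a good sign, while the loss terms of the form $\gamma(1+t)^{\gamma-1}(\cdots)$ are absorbed using the already-established lower-weight estimates, and the quadratic contributions from $F_1,F_2$ are bounded by $N(T)$ times the dissipation via $H^1\hookrightarrow L^\infty$. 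This produces \eqref{1.26}--\eqref{1.27}; the bounds \eqref{1.27a} on $z_t=V_{tt}$, $z_{xt}$ and $z_{tt}$ are then read off from \eqref{1.23} (and its $t$-derivative) once the spatial norms of $V$ are controlled.

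For the improved rates \eqref{1.28}--\eqref{1.29a} under the extra hypothesis $(V_0+\frac1\alpha z_0)\in L^{1}$, I would bring in the Green function method. Let $G(x,t)$ denote the Green function of the constant-coefficient damped-wave operator $\partial_t^{2}+\alpha\partial_t-c_\pm^{2}\partial_x^{2}$ (or, following \cite{Nishihara-Wang-Yang2000}, an approximate Green function for the variable-coefficient operator $\partial_t^{2}+\alpha\partial_t+(p'(\bar v)\,\partial_x\,\cdot\,)_x$), which decomposes into a heat-kernel-type part satisfying $\|\partial_x^{k}G(t)\|_{L^2}\le C(1+t)^{-1/4-k/2}$ plus an exponentially small remainder. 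Writing Duhamel's formula for \eqref{1.23}, the initial-data contribution inherits the $(1+t)^{-1/4}$ gain precisely from the $L^{1}$ bound on the combination $V_0+\frac1\alpha z_0$ that pairs with $G$; the source contributions $\int_0^{t}G(t-s)\ast(F_1+F_2)(s)\,ds$ are estimated using the decay \eqref{1.26}--\eqref{1.27} already obtained for $(V,z)$ and the fast decay of the diffusion-wave source, yielding the additional $(1+t)^{-1/4}$ after a bootstrap. Re-running the weighted energy hierarchy with this upgraded low-order bound then closes \eqref{1.28}--\eqref{1.29a}, and uniqueness follows from a standard energy estimate for the difference of two solutions.

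\textbf{Main obstacle.} The delicate point is the coupling of the two methods at top order: the energy estimates by themselves cannot detect the $(1+t)^{-1/4}$ improvement (they only give $\|V\|$ bounded), whereas the Green-function estimate of the Duhamel term requires the energy decay as input, so the two must be bootstrapped carefully against each other. In particular one must control the quadratic remainder in $F_1$ and the variable-coefficient error term $(p'(\bar v)-c_\pm^{2})V_{xx}$, absorbing the latter either into the approximate Green function or treating it as a source with just enough decay not to destroy the scheme; keeping the sharp heat-kernel rate while the coefficient $p'(\bar v(x,t))$ is both $x$- and $t$-dependent is the technically hardest part.
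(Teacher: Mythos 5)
Your overall two-stage architecture matches the paper: first close a time-weighted $L^{2}$ energy hierarchy (giving global existence and \eqref{1.26}--\eqref{1.27a}), then use Duhamel's formula plus the $L^{1}$ hypothesis to gain the extra factor $(1+t)^{-1/4}$ and bootstrap. However, the Green-function step you propose is genuinely different from what the paper does. You take the Nishihara--Wang--Yang route: Duhamel with (an approximate version of) the Green function for the second-order damped-wave operator $\partial_{t}^{2}+\alpha\partial_{t}+\bigl(p'(\bar v)\,\partial_{x}\,\cdot\,\bigr)_{x}$, whose long-wave part behaves like a heat kernel. The paper instead deliberately avoids that construction: it rewrites \eqref{1.23} as the first-order parabolic equation $V_{t}+p'(v_{+})V_{xx}=-V_{tt}+F_{1}+F_{2}+\bigl[(p'(v_{+})-p'(\bar v))V_{x}\bigr]_{x}$ and convolves with the plain constant-coefficient heat kernel, treating $V_{tt}$ and the $O(\delta)$ coefficient error as source terms. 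Because the $L^{2}$-framework already gives $\|V_{tt}\|\lesssim(1+t)^{-2}$, this source is harmless; the integration by parts in time on $\int_{0}^{t/2}G(t-\tau)\ast V_{\tau\tau}\,d\tau$ produces precisely the combination $V_{0}+z_{0}$ (i.e.\ $V_{0}+\tfrac1\alpha z_{0}$ after unscaling), which is why only that combination needs to be in $L^{1}$. A further point you should be aware of: the paper proves the $(1+t)^{-1/4}$ improvement only for $k=0,1$ via Duhamel (Lemma~3.8) and then propagates it to all $0\le k+l\le 3$, $l\le 2$, by re-running the weighted energy hierarchy with weights shifted by $\epsilon_{0}+\tfrac12$ (Lemma~3.9); this is what yields the $l=2$ rates that the approximate-Green-function method in \cite{Nishihara-Wang-Yang2000} did not reach. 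Both routes are in principle viable, but the paper's parabolic rewriting is substantially lighter and is exactly what buys the weaker hypothesis $(V_{0}+\tfrac1\alpha z_{0})\in L^{1}$ (instead of $V_{0},z_{0}\in L^{1}$ separately) and the extra time derivative.

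Two technical points your sketch glosses over but which drive the structure of the energy estimates. First, the terms $C\,{\rm e}^{-t}\|V\|_{2}^{2}$, $C\,{\rm e}^{-t}\|V_{xx}\|^{2}$, etc.\ arise with constants depending on $|u_{\pm}|$, which is \emph{not} assumed small; they cannot be absorbed by smallness and are handled by Gronwall because they decay exponentially. Second, \eqref{1.25a} is not merely a hyperbolicity statement: after integrating $\int_{\mathbb R}F_{2x}V_{xt}\,dx$ by parts one produces $-\tfrac12\frac{d}{dt}\int_{\mathbb R}gf'V_{xx}^{2}\,dx$, and the sign condition in \eqref{1.25a} ensures that the resulting energy density $V_{xt}^{2}+\bigl(gf'-p'(\bar v)\bigr)V_{xx}^{2}$ stays coercive, $gf'-p'(\bar v)\ge c_{0}>0$. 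This is the precise place where the new structural hypothesis is used, and it recurs at every higher order. Also, $\hat u$ is not in any $L^{p}$ for $p<\infty$, so the Duhamel estimate for the early-time piece of the $F_{2}$ source needs a separate integration-by-parts argument exploiting $g(0)=g'(0)=0$ and the decay of $\bar u$; this is not just bounding $\|F_{2}\|_{L^{1}}$.
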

\begin{remark}\label{R1}
It should be noted that in Nishihara, Wang and Yang \cite{Nishihara-Wang-Yang2000}, the authors required that the initial perturbation $(V_{0}, z_{0})(x)$ be sufficiently small in $(H^{3} \cap L^{1}) \times (H^{2} \cap L^{1})$. But in our Theorem \ref{Thm1}, we require that $(V_{0}+\frac{1}{\alpha}z_{0})(x) \in L^{1}$ and the initial perturbation $(V_{0}, z_{0})(x)$ be sufficiently small in $H^{3} \times H^{2}$, which is weaker than those needed in \cite{Nishihara-Wang-Yang2000}.
\end{remark}
\begin{remark}\label{R2}
As we can see from \cite{Nishihara-Wang-Yang2000}, the authors obtained the optimal decay rates $\|\partial_{x}^{k}\partial_{t}^{l}V(t)\| \leq C(1+t)^{-\frac{1}{4}-\frac{k}{2}-l}$ for $0\leq k+l \leq 3$ and $0\leq l \leq 1$, however it was not clarified in the case of $l=2$. Actually, except for $\|z_{tt}(t)\|$, the convergence rates shown in our results Theorem \ref{Thm1} are all optimal. As for $\|z_{tt}(t)\|$, we can also use the similar way to obtain an extra time-decay $(1+t)^{-\frac{1}{2}}$ when $\left(V_{0},z_{0}\right)(x)$ is small belonging to $H^{4} \times H^{3}$. Thus in this sense, this results in this paper improves the decay rates obtained in \cite{Hsiao-Liu1992, Nishihara1996,Nishihara-Wang-Yang2000}.
\end{remark}
\begin{remark}\label{R3}
Compared with \cite{Hsiao-Liu1992, Nishihara1996, Nishihara-Wang-Yang2000}, in order to close the {\it a priori} assumption \eqref{3.1},
we require an additional technical condition \eqref{1.25a}. Notice that if $g(u)\equiv 0$ or $f(v)\equiv C$, the assumption \eqref{1.25a} is naturally true.
\end{remark}
\begin{remark}\label{R4}
 In fact, as for the case that $v_{+}=v_{-}>0$, the asymptotic profiles of the solutions are expected to be the constant states, we can still obtain the corresponding decay rates which are same as in Theorem \ref{Thm1}.
\end{remark}

The proof of existence and decay rates in Theorem \ref{Thm1} is based on the analysis of the nonlinear diffusion waves and classical energy estimates, as well as Green function method. In fact, compared with former arguments developed in \cite{Hsiao-Liu1992,Nishihara1996, Nishihara-Wang-Yang2000}, our conclusions can be regarded as a more general case, the main new ingredients in our analysis lie in the following.

Firstly, as usual, we can obtain the convergence rates of the solutions by the elementary energy estimates and some elaborate computations.
However, since the complexity of the expression for $F_{2}$, the energy estimates become much more complicate and more difficult because we have to face some extra difficult terms, such as $C{\rm e}^{-t}\|V(t)\|_{2}^{2}$ in \eqref{3.12}, $C{\rm e}^{-t}\|V_{xx}(t)\|^{2}$ in \eqref{3.19}, \eqref{3.25} and \eqref{3.30}, $\frac{1}{2}\frac{{\rm d}}{{\rm d}t}\int_{\mathbb{R}}gf^{\prime}V_{xx}^{2}{\rm d}x$ and in \eqref{3.30} and so on.
For the first two bad terms, actually, one can easy to see in Lemma \ref{L3.3} that $C$ is related to $|u_{\pm}|$, since we don't have the assumption that $|u_{\pm}|\ll 1$, it seems impossible to absorb them with some good terms, but notice that they all have the property of exponential decay, by employing the Gronwall's inequality, we succeeded in obtaining the desired estimate.
As for the last bad term, we require a technical condition \eqref{1.25a}, then it can be absorbed by $-\frac{1}{2}\frac{{\rm d}}{{\rm d}t}\int_{\mathbb{R}}p^\prime(\bar{v}) V_{xx}^{2}{\rm d}x$. One can see Section \ref{S3.1} for more detials. This is a new ingredient in this paper.

The second new ingredient in our analyses lies in the way to obtain the decay estimates \eqref{1.28}-\eqref{1.29a}. As we can see from the dissussions in \cite{Nishihara-Wang-Yang2000}, once they got the existence and decay rates of the solutions in the $L^{2}$-framework, by constructing an approximate Green function for the initial perturbations in $L^{1}$-sense, they obtained the improved decay rates $\|\partial_{x}^{k}\partial_{t}^{l}V(t)\| \leq C(1+t)^{-\frac{1}{4}-\frac{k}{2}-l}$ for $0\leq k+l \leq 3$ and $0\leq l \leq 1$. However, the calculation process is quite complicated and tedious, and the case of $l=2$ is not clarified.
In this paper, we employ a different strategy to derive the improved decay rates \eqref{1.28}-\eqref{1.29a}. Actually, after obtaining the existence and decay rate of the solution in the $L^{2}$-framework, we give the integral representation of the solution through Green function. Then by analyzing the integral representation of the solution, combined with the weighted energy estimate, we firstly obtain \eqref{1.28} for $0\leq k\leq 1$.
 With all these preparations, by continuing to use weighted energy estimates, we can obtain \eqref{1.28}-\eqref{1.29a}. See Section \ref{S3.2} for more detials.
This technique is quite useful and somewhat counterintuitive, it has been successfully used in \cite{Nishikawa1998, Geng-Zhang2015}.
 We think this approach has at least two advantages: one is that when $0\leq k+l \leq 3$ and $0\leq l \leq 2$, we can obtain the optimal decay rates on $\|\partial_{x}^{k}\partial_{t}^{l}V(t)\|$ without having to increase the regularity of the initial value, and the other is that the calculation process is much simpler and clearer. By the way, we will also use this approach to help us consider the asymptotic behavior of solutions to \eqref{1.1} on the quarter plane $\mathbb{R}^{+}\times \mathbb{R}^{+}$ in the future.

The last new ingredient in our analyses is reflected in the regularity requirement for the initial value. As we can see from \cite{Nishihara-Wang-Yang2000}, they obtained their main results under the condition that
\begin{equation}\notag
|v_{+}-v_{-}|+|u_{+}-u_{-}|+\|V_{0}\|_{3}+\|z_{0}\|_{2}+\|V_{0}\|_{L^{1}}+\|z_{0}\|_{L^{1}}	
\end{equation}
is sufficiently small. Compared with \cite{Nishihara-Wang-Yang2000}, the conditions in our result (Theorem \ref{Thm1}) are indeed much weaker.  The main reason is that we combine Green function theory with weighted energy estimates, which avoids us making complex higher-order estimates using only the integral representation of the solution.

The paper is organized as follows. In Section \ref{S2}, we prepare some preliminaries, which are useful in the proof of Theorem \ref{Thm1}. Section \ref{S3} is devoted to the proof of the convergence of the solutions $(v, u)(x,t)$ to the nonlinear diffusion waves.

\section{Preliminaries}\label{S2}

In this section, we are going to introduce some results on some fundamental properties of the nonlinear diffusion waves $(\bar{v},\bar{u})(x,t)$ and the correction functions $(\hat{v},\hat{u})(x,t)$, which will be used later.

From the previous works of van Duyn and Peletier in \cite{Duyn-Peletier1977}, we can know that the nonlinear diffusion equation $\eqref{1.6}_{1}$ and \eqref{1.7} have a unique self-similar solution called nonlinear diffusion wave in the form
\begin{equation}\label{2.1}
 \left\{\begin{array}{l}
\bar{v}(x, t)= \phi \left(\frac{x}{\sqrt{1+t}}\right):=\phi(\xi), \quad \xi \in \mathbb{R}, \\[3mm]
\phi(\pm \infty)=v_{\pm}.
\end{array}\right.
\end{equation}
Substituting $\eqref{2.1}_{1}$ into $\eqref{1.6}_{1}$, it follows that
\begin{equation}\label{2.2}
 \left(p^{\prime}(\phi(\xi))\phi^{\prime}(\xi)\right)^{\prime}=\frac{\alpha}{2} \xi \phi^{\prime}(\xi).
\end{equation}
Therefore, for any $\xi_{0} \in \mathbb{R}$, one has
\begin{equation}\label{2.3}
 \phi^{\prime}(\xi)=\frac{\phi^{\prime}\left(\xi_{0}\right)p^{\prime}(\phi(\xi_{0}))}{p^{\prime}(\phi(\xi))}{\rm e}^{\int_{\xi_{0}}^{\xi} \frac{\alpha\eta}{2p^{\prime}(\phi(\eta))}{\rm d}\eta}.
\end{equation}
As one can see in \cite{Hsiao-Liu1992}, it is easy to prove that the self-similar solution $\phi(\xi)$ satisfies
\begin{equation}\label{2.4}
  \sum_{k=1}^{4}\left|\frac{{\rm d}^{k}}{{\rm d} \xi^{k}} \phi(\xi)\right|+\left|\phi(\xi)-v_{+}\right|_{\{\xi>0\}}+\left|\phi(\xi)-v_{-}\right|_{\{\xi<0\}} \leq C\left|v_{+}-v_{-}\right| {\rm e}^{-c \xi^{2}},
\end{equation}
and $\bar{v}(x,t)$ satisfies the following dissipative properties:
\begin{equation}\label{2.5}
 \begin{split}
&\bar{v}_{x}=\frac{\phi^{\prime}(\xi)}{\sqrt{1+t}}, \quad \bar{v}_{t}=-\frac{\xi \phi^{\prime}(\xi)}{2(1+t)}, \quad \bar{v}_{xx}=\frac{\phi^{\prime \prime}(\xi)}{1+t},\quad \bar{v}_{xt}=-\frac{\phi^{\prime}(\xi)+\xi \phi^{\prime \prime}(\xi)}{2(1+t)^{\frac{3}{2}}},\quad \bar{v}_{x x x}=\frac{\phi^{\prime \prime \prime}(\xi)}{(1+t)^{\frac{3}{2}}}, \\[3mm]
&\bar{v}_{tt}=\frac{ \xi^{2}\phi^{\prime \prime}(\xi)+3\xi \phi^{\prime}(\xi)}{4(1+t)^{2}}, \quad \bar{v}_{xxt}=-\frac{\xi \phi^{\prime \prime \prime}(\xi)+2\phi^{\prime \prime}(\xi)}{2(1+t)^{2}},\quad \bar{v}_{xtt}=\frac{ \xi^{2}\phi^{\prime \prime \prime}(\xi)+3\phi^{\prime}(\xi)+5\xi \phi^{\prime \prime}(\xi)}{4(1+t)^{\frac{5}{2}}},\\[3mm]
&\bar{v}_{ttt}=-\frac{9\xi^{2}\phi^{\prime \prime}(\xi)+15\xi \phi^{\prime}(\xi)+\xi^{3}\phi^{\prime \prime \prime}(\xi)}{8(1+t)^{3}},\quad \bar{v}_{xxxx}=\frac{\phi^{(4)}(\xi)}{(1+t)^{2}},\quad \bar{v}_{xxxt}=-\frac{\xi \phi^{(4)}(\xi)+3\phi^{\prime \prime \prime}(\xi)}{2(1+t)^{\frac{5}{2}}},\\[3mm]
&\bar{v}_{xttt}=-\frac{12\xi^{2}\phi^{\prime \prime \prime}(\xi)+\xi^{3}\phi^{(4)}(\xi)+15\phi^{\prime}(\xi)+33\xi \phi^{\prime \prime}(\xi)}{8(1+t)^{\frac{7}{2}}},\quad \bar{v}_{xxtt}=\frac{8\phi^{\prime \prime}(\xi)+7\xi \phi^{\prime \prime \prime}(\xi)+\xi^{2} \phi^{(4)}(\xi)}{4(1+t)^{3}}.
\end{split}
\end{equation}
Combining \eqref{2.4} and \eqref{2.5}, we have the decay rates of the nonlinear diffusion waves $\bar{v}(x,t)$.
\begin{lemma}\label{L2.1}
For each $p\in [1,\infty]$ is an integer, the solution $\bar{v}(x,t)$ of \eqref{1.6}-\eqref{1.7} holds that
\begin{equation}\label{2.6}
 \begin{split}
 &\min \left\{v_{+}, v_{-}\right\} \leq \bar{v}(x,t) \leq \max \left\{v_{+}, v_{-}\right\}, \\[2mm]
 & \left\|\partial_{x}^{k} \partial_{t}^{j} \bar{v}(t)\right\|_{L^{p}} \leq C\left|v_{+}-v_{-}\right|(1+t)^{-\frac{k}{2}-j+\frac{1}{2p}}, \quad  k,j \geq 0,\quad 1 \leq k+j \leq 4.
\end{split}
\end{equation}
\end{lemma}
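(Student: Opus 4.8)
The plan is to derive Lemma \ref{L2.1} directly from the pointwise estimate \eqref{2.4} on the self-similar profile $\phi$ together with the scaling identities \eqref{2.5}. The first statement, $\min\{v_+,v_-\}\le\bar v(x,t)\le\max\{v_+,v_-\}$, follows from the monotonicity of $\phi$: equation \eqref{2.3} shows that $\phi'(\xi)$ never changes sign (the right-hand side is a nonvanishing exponential times $\phi'(\xi_0)p'(\phi(\xi_0))/p'(\phi(\xi))$, and $p'<0$ by \eqref{1.25b}), so $\phi$ is monotone and hence takes values between its limits $v_\pm$; since $\bar v(x,t)=\phi(x/\sqrt{1+t})$, the same bound holds for $\bar v$.

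For the decay estimate, I would argue term by term using the table \eqref{2.5}. Each entry expresses $\partial_x^k\partial_t^j\bar v$ as $(1+t)^{-k/2-j}$ times a finite linear combination of terms of the form $\xi^m\phi^{(\ell)}(\xi)$ with $\ell\ge 1$. By \eqref{2.4} we have $|\phi^{(\ell)}(\xi)|\le C|v_+-v_-|\,{\rm e}^{-c\xi^2}$ for $1\le\ell\le 4$, and since $\xi^m{\rm e}^{-c\xi^2}\le C_m{\rm e}^{-c'\xi^2}$ for any $m\ge 0$ and any $c'<c$, every such combination is bounded pointwise by $C|v_+-v_-|\,{\rm e}^{-c\xi^2}$. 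Therefore
\begin{equation}\notag
 |\partial_x^k\partial_t^j\bar v(x,t)|\le C|v_+-v_-|\,(1+t)^{-\frac{k}{2}-j}\,{\rm e}^{-c x^2/(1+t)}.
\end{equation}
Taking the $L^p$ norm in $x$ and substituting $\xi=x/\sqrt{1+t}$ (so ${\rm d}x=\sqrt{1+t}\,{\rm d}\xi$) gives the factor $(1+t)^{1/(2p)}$ from $\|{\rm e}^{-c x^2/(1+t)}\|_{L^p_x}=(1+t)^{1/(2p)}\|{\rm e}^{-c\xi^2}\|_{L^p_\xi}$, and $\|{\rm e}^{-c\xi^2}\|_{L^p_\xi}$ is a finite constant (equal to $1$ when $p=\infty$). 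This yields exactly \eqref{2.6}. One subtlety to record is that \eqref{2.5} as written only lists derivatives up to fourth order, and the constant $c$ in the exponent may need to be shrunk slightly each time a polynomial factor $\xi^m$ is absorbed; both are harmless and one fixes a single $c>0$ at the end.

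I do not expect a serious obstacle here: the lemma is essentially a bookkeeping consequence of \eqref{2.4}, \eqref{2.5}, and the change of variables $\xi=x/\sqrt{1+t}$. The only mildly delicate point is making sure the estimate \eqref{2.4} is invoked with $\ell\ge 1$ so that the Gaussian factor is genuinely available for every term appearing in \eqref{2.5} in the range $1\le k+j\le 4$ — this is why the statement excludes the case $k=j=0$, for which $\bar v$ itself does not decay but is merely bounded, which is precisely the content of the first line of \eqref{2.6}. If one wanted, the same computation also gives the analogous weighted bounds $\|x^\beta\partial_x^k\partial_t^j\bar v\|_{L^p}$, but this is not needed for the statement as given.
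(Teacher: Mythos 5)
Your proof is correct and follows the same route the paper takes (the paper simply states "Combining \eqref{2.4} and \eqref{2.5}, we have the decay rates" without elaboration). You have filled in exactly the details the paper leaves implicit: monotonicity of $\phi$ from the sign-preservation in \eqref{2.3}, the pointwise Gaussian bound for each term $\xi^m\phi^{(\ell)}(\xi)$ ($\ell\ge 1$) appearing in \eqref{2.5}, and the change of variables $\xi=x/\sqrt{1+t}$ yielding the factor $(1+t)^{1/(2p)}$.
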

From \eqref{1.17} and \eqref{1.18}, one can immediately confirmed that the correction function $(\hat{v},\hat{u})(x,t)$ satisfies
\begin{lemma}\label{L2.2}
 Let $k, j$ be nonnegative integers and $p \in [1, \infty]$ is an integer, it holds that
 \begin{equation}\label{2.7}
  \begin{split}
    &\left\|\partial_{x}^{k} \partial_{t}^{j} \hat{v}(t)\right\|_{L^{p}}\leq C|u_{+}-u_{-}| {\rm e}^{-\alpha t}, \quad k \geq 0, ~~ j \geq 0,\\
    &\left\|\partial_{x}^{k} \partial_{t}^{j} \hat{u}(t)\right\|_{L^{p}}\leq C|u_{+}-u_{-}|{\rm e}^{-\alpha t}, \quad k \geq 1, ~~ j \geq 0,\\
    & \left\|\hat{u}(t)\right\|_{L^{\infty}}\leq \max\{|u_{+}|,|u_{-}|\}{\rm e}^{-\alpha t}.
    \end{split}
  \end{equation}
\end{lemma}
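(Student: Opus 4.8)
The plan is to prove \eqref{2.7} by direct differentiation of the explicit formulas \eqref{1.17}--\eqref{1.18}, exploiting that in each of $\hat v$ and $\hat u$ the spatial and temporal dependence completely separate: the entire $t$-dependence is carried by the scalar factor ${\rm e}^{-\alpha t}$, while the $x$-dependence is carried by the fixed profile $m_{0}$ (resp.\ its antiderivative). Since $m_{0}\in C_{0}^{\infty}(\mathbb{R})$, we have $m_{0}^{(k)}\in L^{p}(\mathbb{R})$ for every integer $k\geq 0$ and every $p\in[1,\infty]$, with $\|m_{0}^{(k)}\|_{L^{p}}$ a fixed number depending only on $m_{0},k,p$.

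First, for $\hat v$: differentiating \eqref{1.17} gives
\[
\partial_{x}^{k}\partial_{t}^{j}\hat v(x,t)=\frac{u_{+}-u_{-}}{-\alpha}\,(-\alpha)^{j}\,{\rm e}^{-\alpha t}\,m_{0}^{(k)}(x),
\]
so that $\|\partial_{x}^{k}\partial_{t}^{j}\hat v(t)\|_{L^{p}}=\alpha^{j-1}\|m_{0}^{(k)}\|_{L^{p}}\,|u_{+}-u_{-}|\,{\rm e}^{-\alpha t}\leq C\,|u_{+}-u_{-}|\,{\rm e}^{-\alpha t}$, which is the first line of \eqref{2.7}. Second, for $\hat u$ with $k\geq 1$: by the fundamental theorem of calculus applied to \eqref{1.18}, $\partial_{x}\hat u(x,t)={\rm e}^{-\alpha t}(u_{+}-u_{-})\,m_{0}(x)$, and hence $\partial_{x}^{k}\partial_{t}^{j}\hat u(x,t)=(-\alpha)^{j}\,{\rm e}^{-\alpha t}(u_{+}-u_{-})\,m_{0}^{(k-1)}(x)$ for all $k\geq 1$, $j\geq 0$; taking $L^{p}$ norms yields $\|\partial_{x}^{k}\partial_{t}^{j}\hat u(t)\|_{L^{p}}=\alpha^{j}\|m_{0}^{(k-1)}\|_{L^{p}}\,|u_{+}-u_{-}|\,{\rm e}^{-\alpha t}\leq C\,|u_{+}-u_{-}|\,{\rm e}^{-\alpha t}$, the second line of \eqref{2.7}. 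The restriction $k\geq 1$ is essential here: since $\hat u(x,t)\to{\rm e}^{-\alpha t}u_{\pm}$ as $x\to\pm\infty$ in accordance with \eqref{1.9}, the function $\hat u(\cdot,t)$ does not belong to $L^{p}(\mathbb{R})$ for $p<\infty$, and only its $x$-derivatives, which annihilate the nonzero far-field constants, are integrable.

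Finally, for the $L^{\infty}$ bound on $\hat u$ itself, we use that the cutoff $m_{0}\in C_{0}^{\infty}(\mathbb{R})$ in the construction may be — and from now on is — chosen nonnegative, so that $M(x):=\int_{-\infty}^{x}m_{0}(y)\,{\rm d}y$ is nondecreasing with $M(-\infty)=0$, $M(+\infty)=1$, i.e.\ $0\leq M(x)\leq 1$. Then $\hat u(x,t)={\rm e}^{-\alpha t}\big[(1-M(x))\,u_{-}+M(x)\,u_{+}\big]$ is ${\rm e}^{-\alpha t}$ times a convex combination of $u_{-}$ and $u_{+}$, whence $|\hat u(x,t)|\leq\max\{|u_{+}|,|u_{-}|\}\,{\rm e}^{-\alpha t}$ for every $x$, proving the third line of \eqref{2.7}.

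There is no substantive obstacle in this lemma; it is a bookkeeping consequence of the explicit definitions together with $m_{0}\in C_{0}^{\infty}$. The only points worth flagging are the two indicated above — that $k\geq 1$ cannot be dropped for $\hat u$ because of its nonzero far-field limits, and that the sharp constant $\max\{|u_{+}|,|u_{-}|\}$ in the last estimate requires choosing $m_{0}\geq 0$ — together with the harmless dependence of $C$ on $k,j,p$ through $\|m_{0}^{(k)}\|_{L^{p}}$, which is irrelevant since \eqref{2.7} is applied only for $k+j\leq 4$.
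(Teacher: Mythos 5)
Your proof is correct, and since the paper treats Lemma \ref{L2.2} as immediate from the definitions \eqref{1.17}--\eqref{1.18} and supplies no argument of its own, your direct differentiation is the natural (and essentially unique) way to fill in the detail. Your flag on the $L^\infty$ estimate for $\hat u$ is a genuine one worth recording: the stated hypotheses $m_0\in C_0^\infty(\mathbb{R})$ and $\int_{\mathbb{R}} m_0\,{\rm d}x=1$ do not by themselves force $0\leq\int_{-\infty}^{x}m_0\,{\rm d}y\leq 1$, so the sharp constant $\max\{|u_+|,|u_-|\}$ does require the additional (implicit and harmless) normalization $m_0\geq 0$. The paper leaves this unsaid; since downstream this bound only ever enters through a generic constant $C$, nothing breaks, but your version makes the hypothesis explicit. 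Your remark that $k\geq 1$ is forced for $\hat u$ by its nonzero far-field limits is exactly the content of the paper's Remark \ref{R2.1}.
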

\begin{remark}\label{R2.1}
 It is easy to see that $\hat{u}(x,t)$ doesn't belong to any $L^{p}$ space for $1 \leq p < \infty$.
\end{remark}
Finally, we introduce the Sobolev inequation.
\begin{lemma}\label{L2.3}
 Let $f \in H^{1}(\mathbb{R})$, then
 \begin{equation}\label{2.8}
  \|f\|_{L^{\infty}} \leq \sqrt{2}\|f\|^{\frac{1}{2}}\|f_{x}\|^{\frac{1}{2}} .
  \end{equation}
\end{lemma}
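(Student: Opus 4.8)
\textbf{Proof plan for Lemma \ref{L2.3}.}

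The plan is to give the standard one-dimensional Gagliardo--Nirenberg argument, which reduces to the fundamental theorem of calculus plus Cauchy--Schwarz. First I would reduce to the case $f \in C_0^\infty(\mathbb{R})$ by density: smooth compactly supported functions are dense in $H^1(\mathbb{R})$, and if the inequality holds on that dense class, then for general $f \in H^1$ one picks a sequence $f_n \to f$ in $H^1$, notes that $f_n \to f$ also in $L^\infty$ (again by the inequality applied to $f_n - f_m$, which shows $(f_n)$ is Cauchy in $L^\infty$), and passes to the limit in $\|f_n\|_{L^\infty} \le \sqrt{2}\|f_n\|^{1/2}\|(f_n)_x\|^{1/2}$.

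For $f \in C_0^\infty(\mathbb{R})$, fix $x \in \mathbb{R}$. Since $f$ vanishes at $-\infty$, write
\begin{equation}\notag
f(x)^2 = \int_{-\infty}^{x} \frac{{\rm d}}{{\rm d}y}\left(f(y)^2\right)\,{\rm d}y = 2\int_{-\infty}^{x} f(y) f^\prime(y)\,{\rm d}y,
\end{equation}
and similarly, since $f$ vanishes at $+\infty$,
\begin{equation}\notag
f(x)^2 = -2\int_{x}^{\infty} f(y) f^\prime(y)\,{\rm d}y.
\end{equation}
Adding these two identities and taking absolute values,
\begin{equation}\notag
2 f(x)^2 \le 2\int_{-\infty}^{\infty} |f(y)|\,|f^\prime(y)|\,{\rm d}y,
\end{equation}
so $f(x)^2 \le \int_{\mathbb{R}} |f|\,|f^\prime|\,{\rm d}y$. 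Now apply the Cauchy--Schwarz inequality to the right-hand side to get $f(x)^2 \le \|f\|\,\|f_x\|$. Since the bound is uniform in $x$, taking the supremum over $x$ and then the square root yields $\|f\|_{L^\infty} \le \|f\|^{1/2}\|f_x\|^{1/2}$, which is in fact slightly sharper than the stated constant; the factor $\sqrt{2}$ in \eqref{2.8} is then immediate.

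There is essentially no obstacle here: the only point requiring a word of care is the density/limiting step, namely checking that convergence in $H^1$ combined with the inequality on the dense class forces uniform convergence, so that both sides pass to the limit. Everything else is the one-line fundamental-theorem-of-calculus computation above. (If one prefers to avoid the density argument altogether, one can instead invoke that every $f \in H^1(\mathbb{R})$ has an absolutely continuous representative with $f(x) \to 0$ as $|x| \to \infty$ and $f^\prime \in L^2$, and run the same computation directly on that representative.)
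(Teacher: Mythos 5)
Your proof is correct and is the standard one-dimensional Gagliardo--Nirenberg argument; the paper states Lemma \ref{L2.3} without proof, so this is exactly the intended reasoning. You even obtain the sharper constant $1$ by averaging the two one-sided integrals (whereas the cruder bound $f(x)^2 \le 2\int_{-\infty}^{x}|ff'|\,{\rm d}y \le 2\|f\|\|f_x\|$ already gives the stated $\sqrt{2}$), and the density/limiting step is handled correctly.
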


\section{Proof of Theorem \ref{Thm1}}\label{S3}

In this section we devote ourselves to the proof of Theorem \ref{Thm1} concerning the existence, uniqueness and time decay rates of global
smooth solutions to \eqref{1.22}. In the first subsection, we shall prove the global existence, uniqueness and time decay rates by deriving the key uniform in-time {\it a priori} estimates in the $L^{2}$-framework. In the second subsection, we apply the Duhamel's principle combined with the weighted energy estimates to obtain the improved time decay rates of solutions. In what follows, we can put $\alpha=1$ without loss of generality, and denote $g(u)$ and $f(v)$ by $g$ and $f$ without any confusion.

\subsection{Proof of \eqref{1.26}-\eqref{1.27a}}\label{S3.1}

 The main purpose of this subsection is to study global existence and uniqueness of solutions to \eqref{1.22} in the $L^{2}$-framework, and obtain \eqref{1.26}-\eqref{1.27a}. It is well known that the global existence can be obtained by the continuation argument based on the local existence of solutions and {\it a priori} estimates. As for \eqref{1.22}, the local existence can be proved by the standard iteration method (cf. \cite{Kawashima1983, Nishida1978}) and its proof is omitted for brevity. In the following, we will devote ourselves to establish the following {\it a priori} estimates.

 \begin{proposition}\label{p1}
Assume that all the conditions in Theorem \ref{Thm1} hold, $V(x,t)$ is the smooth solution to the Cauchy problem \eqref{1.22} on $0\leq t \leq T$ for $T>0$. Then there exist constants $\varepsilon>0$ and $C>0$ shch that if
\begin{equation}\label{3.1}
\begin{split}
 N(T):=\sup \limits_{0 \leq t \leq T}&\left\{\sum \limits_{k=0}^{3}(1+t)^{k}\|\partial_{x}^{k}V(t)\|^{2}+ \sum \limits_{k=0}^{2}(1+t)^{k+2}\|\partial_{x}^{k}z(t)\|^{2} \right. \\
 &~~~~~·\left.+\sum \limits_{k=0}^{1}(1+t)^{k+4}\|\partial_{x}^{k}z_{t}(t)\|^{2}\right \} \leq \varepsilon^{2},
 \end{split}
\end{equation}
then it holds that
\begin{equation}\label{3.1a}
  \begin{split}
  \sum \limits_{k=0}^{3}&(1+t)^{k}\|\partial_{x}^{k}V(t)\|^{2}+ \sum \limits_{k=0}^{2}(1+t)^{k+2}\|\partial_{x}^{k}z(t)\|^{2}\\
  &+\int_{0}^{t}\left[\sum \limits_{j=1}^{3}(1+\tau)^{j-1}\|\partial_{x}^{j}V(\tau)\|^{2}+\sum \limits_{j=0}^{2}(1+\tau)^{j+1}\|\partial_{x}^{j}z(\tau)\|^{2}\right] {\rm d}\tau \\
  \leq &C(\|V_{0}\|^{2}_{3}+\|z_{0}\|^{2}_{2}+ \delta),
  \end{split}
  \end{equation}
  and
  \begin{equation}\label{3.1b}
  \begin{split}
  (1+t)^{4}&\|z_{t}(t)\|^{2}+(1+t)^{5}(\|z_{xt}(t)\|^{2}+\|z_{tt}(t)\|^{2})\\
  &+\int_{0}^{t}\left[(1+\tau)^{4}\|z_{xt}(\tau)\|^{2}+(1+\tau)^{5}\|z_{tt}(\tau)\|^{2}\right] {\rm d}\tau \\
  \leq& C(\|V_{0}\|^{2}_{3}+\|z_{0}\|^{2}_{2}+ \delta).
  \end{split}
  \end{equation}
\end{proposition}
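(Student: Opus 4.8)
The plan is to establish the a priori estimates \eqref{3.1a}--\eqref{3.1b} by a bootstrap argument built on time-weighted energy estimates applied directly to the system \eqref{1.22} (equivalently the single equation \eqref{1.23}). I would work with the reformulation \eqref{1.23}, since the key multipliers are then transparent. First I would record the basic energy identity: multiply $\eqref{1.23}_1$ by $V_t=z$ and integrate by parts in $x$ to get
\begin{equation}\notag
\frac{1}{2}\frac{{\rm d}}{{\rm d}t}\int_{\mathbb R}\bigl(z^2-p'(\bar v)V_x^2\bigr){\rm d}x+\alpha\int_{\mathbb R}z^2{\rm d}x
=\frac{1}{2}\int_{\mathbb R}p'(\bar v)_tV_x^2{\rm d}x+\int_{\mathbb R}(F_1+F_2)z\,{\rm d}x,
\end{equation}
where $-p'(\bar v)>0$ by \eqref{1.25b}. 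Since $z=V_t$ alone does not control $V_x$, I would add the standard ``Nishihara'' correction: multiply $\eqref{1.23}_1$ by $V$ to produce $\frac{{\rm d}}{{\rm d}t}\int(\alpha V^2/2+VV_t)+\int(-p'(\bar v))V_x^2-\int z^2=\int(F_1+F_2)V$, and form a suitable linear combination of the two identities so that the combined ``energy'' $\mathcal E_0(t)$ is equivalent to $\|V\|_1^2+\|z\|^2$ and its dissipation controls $\|V_x\|^2+\|z\|^2$. The source terms $F_1$, $F_2$ are, by \eqref{1.24}--\eqref{1.25}, either quadratic in the unknowns (hence absorbable under the smallness of $N(T)$) or linear in the diffusion-wave/correction data $\bar v$, $\hat v$, $\hat u$, for which Lemmas \ref{L2.1}--\ref{L2.2} give the decay $(1+t)^{-\text{(power)}}$ or ${\rm e}^{-\alpha t}$; these integrate in time against the weights.

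Next I would differentiate \eqref{1.23} in $x$ once, twice, and three times, and run the same pair of multipliers ($\partial_x^k V_t$ and $\partial_x^k V$) on each differentiated equation, now multiplying each resulting identity by the time weight $(1+t)^k$ and using the elementary fact $\frac{{\rm d}}{{\rm d}t}\bigl((1+t)^k\mathcal E_k\bigr)=(1+t)^k\mathcal E_k'+k(1+t)^{k-1}\mathcal E_k$, so the lower-order dissipation already controlled at step $k-1$ pays for the new weight. Summing over $k=0,1,2,3$ (with the $z$-part handled via $z=V_t$, $z_x=V_{xt}$, etc.) yields \eqref{3.1a}. To reach \eqref{3.1b} I would additionally differentiate \eqref{1.23} in $t$, estimate $\|z_t\|$, $\|z_{xt}\|$, $\|z_{tt}\|$ with weights $(1+t)^4$ and $(1+t)^5$ in the same way, using the already-established bounds on $V$ and $z$ and the equation itself to trade $z_{tt}=V_{ttt}$ and $z_{xt}=V_{xtt}$ against $x$-derivatives of $V$ plus sources. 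Throughout, the smallness of $\delta$ controls the $\bar v$-dependent coefficients and the smallness of $N(T)$ closes the quadratic nonlinear terms, so the constant $C$ on the right of \eqref{3.1a}--\eqref{3.1b} depends only on the data, and a standard continuation argument then upgrades the a priori bound to a global one.

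The main obstacle, as the authors flag in Remark \ref{R3} and the discussion preceding it, is the term coming from $F_2=(g\,f)_x$ in the top-order estimate: when one does the $V_{xx}$ (i.e.\ $k=2$) energy estimate, differentiating $F_2$ twice produces a term of the form $\frac{1}{2}\frac{{\rm d}}{{\rm d}t}\int_{\mathbb R}g f'(\bar v)V_{xx}^2{\rm d}x$ that is \emph{not} small (since $g$ depends on $\bar u+\hat u$ and we do \emph{not} assume $|u_\pm|\ll1$) and cannot be absorbed by the nonlinear smallness. The resolution is structural: this term combines with the good principal term $-\frac12\frac{{\rm d}}{{\rm d}t}\int_{\mathbb R}p'(\bar v)V_{xx}^2{\rm d}x$ to give $\frac12\frac{{\rm d}}{{\rm d}t}\int_{\mathbb R}\bigl(g f'(\bar v)-p'(\bar v)\bigr)V_{xx}^2{\rm d}x$, and assumption \eqref{1.25a} guarantees $g f'(\bar v)-p'(\bar v)\ge c_0>0$ on the relevant ranges of $u$ and $v$, so the combined quantity is a bona fide positive-definite energy. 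A second, milder obstacle is the family of ``bad'' exponentially small terms such as $C{\rm e}^{-t}\|V\|_2^2$ and $C{\rm e}^{-t}\|V_{xx}\|^2$ (again with constants involving $|u_\pm|$, not absorbable): these I would handle exactly as indicated in the paper, by keeping them on the right-hand side and invoking Gronwall's inequality at the end, since $\int_0^t(1+\tau)^k{\rm e}^{-c\tau}\|V_{xx}(\tau)\|^2\,{\rm d}\tau$ is controlled by $\sup_{0\le\tau\le t}\|V_{xx}(\tau)\|^2$ times a finite constant, or more carefully by the exponential-weight Gronwall trick. Once these two points are dealt with, the remaining computations are the routine time-weighted energy bookkeeping sketched above.
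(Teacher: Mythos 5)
Your proposal is correct and takes essentially the same route as the paper: the Nishihara-type multipliers $\partial_x^k V$ and $\partial_x^k V_t$ (and their $t$-differentiated counterparts), the time weights $(1+t)^k$ paid for by lower-order dissipation, the incorporation of the non-small term $\frac12\frac{{\rm d}}{{\rm d}t}\int gf'V_{xx}^2\,{\rm d}x$ into the principal energy via $gf'-p'(\bar v)\ge c_0>0$ under \eqref{1.25a}, and the Gronwall treatment of the ${\rm e}^{-t}$-weighted higher-order terms are all exactly the paper's ingredients (Lemmas \ref{L3.1}--\ref{L3.7}).
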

 From \eqref{3.1} and the Sobolev inequality in Lemma \ref{L2.3}, one can immediately obtain
\begin{equation}\label{3.2}
\begin{split}
 &\|\partial_{x}^{k}V(t)\|_{L^{\infty}} \leq \sqrt{2}\varepsilon (1+t)^{-\frac{1}{4}-\frac{k}{2}},\quad k=0,1,2,\\
 &\|\partial_{x}^{k}z(t)\|_{L^{\infty}} \leq \sqrt{2}\varepsilon (1+t)^{-\frac{5}{4}-\frac{k}{2}},\quad k=0,1,\\
 &\|z_{t}(t)\|_{L^{\infty}} \leq \sqrt{2}\varepsilon (1+t)^{-\frac{9}{4}},
 \end{split}
\end{equation}
which will be used later. Then we shall prove the following lemma, which will play a key role in obtaining \eqref{3.1a}-\eqref{3.1b}.
\begin{lemma}\label{L3.1}
Assume that all the conditions in Proposition \ref{p1} hold, then it holds that
  \begin{align}\label{3.3}
&|(g^\prime f)(x,t)|\leq C(\varepsilon+\delta)(1+t)^{-\frac{1}{2}}+C{\rm e}^{-t},~~~|(g f^\prime)(x,t)|\leq C(\varepsilon+\delta)(1+t)^{-1}+C{\rm e}^{-t},\nonumber\\
&|(g^\prime f)_{x}(x,t)|\leq C(\varepsilon+\delta)(1+t)^{-1},~~~|(g^\prime f)_{t}(x,t)|\leq C(\varepsilon+\delta)(1+t)^{-\frac{3}{2}}+C{\rm e}^{-t},\nonumber\\
&|(g^\prime f)_{xx}(x,t)|\leq C|V_{xxt}(x,t)|+C(1+t)^{-\frac{1}{2}}|V_{xxx}(x,t)|+C(\varepsilon+\delta)(1+t)^{-\frac{3}{2}},\nonumber\\
&|(g^\prime f)_{xt}(x,t)|\leq C|V_{xtt}(x,t)|+C(1+t)^{-\frac{1}{2}}|V_{xxt}(x,t)|+C(\varepsilon+\delta)(1+t)^{-2},\nonumber\\
&|(g^\prime f)_{tt}(x,t)|\leq C|V_{ttt}(x,t)|+C(1+t)^{-\frac{1}{2}}|V_{xtt}(x,t)|+C(\varepsilon+\delta)(1+t)^{-\frac{5}{2}}+C{\rm e}^{-t},\nonumber\\
&|(g f^\prime)_{x}(x,t)|\leq C(\varepsilon+\delta)(1+t)^{-\frac{3}{2}},~~~|(g f^\prime)_{t}(x,t)|\leq C(\varepsilon+\delta)(1+t)^{-2}+C{\rm e}^{-t},\nonumber\\
&|(g f^\prime)_{xx}(x,t)|\leq C(1+t)^{-\frac{1}{2}}|V_{xxt}(x,t)|+C(1+t)^{-1}|V_{xxx}(x,t)|+C(\varepsilon+\delta)(1+t)^{-2},\nonumber\\
&|(g f^\prime)_{xt}(x,t)|\leq C(1+t)^{-\frac{1}{2}}|V_{xtt}(x,t)|+C(1+t)^{-1}|V_{xxt}(x,t)|+C(\varepsilon+\delta)(1+t)^{-\frac{5}{2}},\nonumber\\
&|(g f^\prime)_{tt}(x,t)|\leq C(1+t)^{-\frac{1}{2}}|V_{ttt}(x,t)|+C(1+t)^{-1}|V_{xtt}(x,t)|+C(\varepsilon+\delta)(1+t)^{-3}+C{\rm e}^{-t}.
\end{align}
\end{lemma}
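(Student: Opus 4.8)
The plan is to estimate each of the composite quantities $g'f$, $gf'$ and their space-time derivatives directly, using the chain rule together with the structural hypotheses \eqref{1.25b}--\eqref{1.25c} and the decay information already assembled. Recall $g=g(u)$ with $u=z+\bar u+\hat u$ and $f=f(v)$ with $v=V_x+\bar v+\hat v$. The starting point is that $g(0)=g'(0)=0$, so Taylor expansion gives $|g'(u)|\le C|u|$ and $|g(u)|\le C|u|^2$ on the relevant bounded range; since $u=z+\bar u+\hat u$ and $\bar u = -\,p(\bar v)_x/\alpha = O((1+t)^{-1/2})\delta$ by \eqref{1.5}, \eqref{2.6}, while $\|z\|_{L^\infty}\lesssim\varepsilon(1+t)^{-5/4}$ by \eqref{3.2} and $\|\hat u\|_{L^\infty}\lesssim e^{-t}$ by \eqref{2.7}, one reads off $|u|\le C(\varepsilon+\delta)(1+t)^{-1/2}+Ce^{-t}$, hence $|g'f|\le C(\varepsilon+\delta)(1+t)^{-1/2}+Ce^{-t}$ and $|gf'|\le C(\varepsilon+\delta)(1+t)^{-1}+Ce^{-t}$ (the latter using $|g|\le C|u|^2$, so the $e^{-t}$ cross-terms are dominated). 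The first line of \eqref{3.3} follows.

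For the derivatives I would just differentiate: $(g'f)_x = g''f\,u_x + g'f'\,v_x$, with $u_x = z_x + \bar u_x + \hat u_x$ and $v_x = V_{xx} + \bar v_x + \hat v_x$. Here a subtlety must be tracked carefully: $V_{xx}$ appears through $v_x$, and whenever a derivative hits the $v$-slot one more time it produces $V_{xxx}$, $V_{xxt}$, etc.; so the higher-order estimates in \eqref{3.3} carry explicit $V_{xxx}$, $V_{xxt}$, $V_{xtt}$, $V_{ttt}$ terms that are \emph{not} bounded by $N(T)$ alone (these are exactly the top-order quantities still to be controlled in Section~\ref{S3.1}), while every other factor is either a lower $V$-derivative controlled by \eqref{3.2}, a $\bar v,\bar u$-derivative controlled by \eqref{2.6}, or a $\hat v,\hat u$-derivative controlled by \eqref{2.7} and hence exponentially small. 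The cleanest bookkeeping is: in $(g^{(k)}f^{(l)})_{x^a t^b}$, isolate the single monomial whose every derivative falls on the $v$-slot and all hit $V_x$ (giving $V_{x^{a+1}t^{b}}$ with a coefficient $\le C$), next the monomial with one fewer such hit (giving $V_{x^{a}t^{b}}$ or similar, times a $\bar v$-factor of size $(1+t)^{-1/2}$), and collect everything else into the remainder $C(\varepsilon+\delta)(1+t)^{-\text{(order)}/2-1/2}+Ce^{-t}$. Matching orders: each plain $x$-derivative costs $(1+t)^{-1/2}$, each $t$-derivative costs $(1+t)^{-1}$, which is why e.g. $|(g'f)_x|\le C(\varepsilon+\delta)(1+t)^{-1}$ with no surviving $V_{xx}$ term (there the $V_{xx}$ monomial is $g''f\,V_{xx}$, but $g''f$ itself is $O((\varepsilon+\delta)(1+t)^{-1/2}\!\cdot\!|V_{xx}|)$... actually one keeps $V_{xx}$ only if $\|V_{xx}\|_{L^\infty}$ is not already summable, which by \eqref{3.2} it is: $(1+t)^{-5/4}$ — so it gets absorbed), whereas $V_{xxx}$, $V_{xxt}$ genuinely survive because no $L^\infty$ bound on third-order quantities is available yet.

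The one point requiring a little care — and the main thing to get right rather than the main obstacle — is the $gf'$ rows, where the leading factor is $g(u)$ of size $|u|^2\lesssim(\varepsilon+\delta)^2(1+t)^{-1}$, which buys an extra half power relative to the corresponding $g'f$ row; this is why $|(gf')_{xx}|$ carries $C(1+t)^{-1/2}|V_{xxt}|+C(1+t)^{-1}|V_{xxx}|$ (an extra $(1+t)^{-1/2}$ on each $V$-term) rather than the bare $C|V_{xxt}|+C(1+t)^{-1/2}|V_{xxx}|$ of $|(g'f)_{xx}|$. Concretely I would write $f' = f'(v)$, $f'' = f''(v)$ etc. as bounded (by \eqref{1.25b}, $f\in C^3$ on the compact $v$-range, which is guaranteed once $N(T)\le\varepsilon^2$ keeps $v=V_x+\bar v+\hat v$ near $[\min v_\pm,\max v_\pm]$), expand $(gf')_{x^at^b}$ by Leibniz, and in each resulting monomial count how many factors of $u$ (each $\le C(\varepsilon+\delta)(1+t)^{-1/2}$ or $\le C|z|,|\hat u|$ exponentially small) versus how many derivatives hit the $v$-slot. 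The estimate \eqref{3.2} and Lemmas \ref{L2.1}, \ref{L2.2} supply every needed factor bound; assembling them with the order-matching rule above yields all ten inequalities. The only genuinely delicate step is confirming that no unwanted third-order $V$-term sneaks into the first six low-order lines — that is, verifying term-by-term that wherever a $V_{xx}$ (or $V_{xt}$) would appear it is multiplied by a factor small enough (a $g',g'',\bar v_x$, or $\hat v$ factor, contributing $(1+t)^{-1/2}$ or better) to fold it into the $C(\varepsilon+\delta)(1+t)^{-\cdots}$ remainder via \eqref{3.2}; this is routine but must be done honestly for each of the ten estimates.
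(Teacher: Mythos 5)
Your proposal is correct and follows essentially the same route as the paper's proof: both first establish, via Taylor expansion using $g(0)=g'(0)=0$ and the $C^3$-regularity in \eqref{1.25b}--\eqref{1.25c}, the pointwise bounds $|g'f|\le C|u|$, $|gf'|\le C|u|^2$ together with the Leibniz-expanded bounds on $(g'f)_{ij}$, $(gf')_{ij}$ in terms of $u$, $u_i$, $v_i$, $u_{ij}$, $v_{ij}$, and then substitute $u=V_t+\bar u+\hat u$, $v=V_x+\bar v+\hat v$ and feed in $\eqref{1.6}_2$, Lemmas \ref{L2.1}--\ref{L2.3} and the a priori bound \eqref{3.2}. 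The bookkeeping you spell out (isolating the third-order $V$-terms that survive because no $L^\infty$ bound is available, and noting that $g(u)\sim u^2$ buys the extra $(1+t)^{-1/2}$ in the $gf'$ rows) is exactly what the paper compresses into "one can immediately obtain \eqref{3.3}".
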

\begin{proof}
By direct calculation, it follows from \eqref{1.25b}-\eqref{1.25c} and Taylor's expansion that
\begin{equation}\notag
  \begin{split}
    &\left|g^\prime f\right|\leq C|u|, ~~~~~ |(g f^\prime)|\leq C|u^{2}|, \\
    &\left|(g^\prime f)_{i}\right|\leq C(|u_{i}|+|uv_{i}|),~~~|(g f^\prime)_{i}|\leq C(|uu_{i}|+|u^{2}v_{i}|),\\
    &\left|\left(g^\prime f\right)_{ij}\right| \leq C(|u_{i}u_{j}|+|u_{ij}|+|u_{i}v_{j}|+|u_{j}v_{i}|+|uv_{i}v_{j}|+|uv_{ij}|),\\
    &\left|\left(g f^\prime\right)_{ij}\right| \leq C(|u_{i}u_{j}|+|uu_{ij}|+|uu_{i}v_{j}|+|uu_{j}v_{i}|+|u^{2}v_{i}v_{j}|+|u^{2}v_{ij}|),
    \end{split}
  \end{equation}
for $i,j=x ~\mbox{or}~ t$. Notice that $u=V_{t}+\bar{u}+\hat{u}$ and $v=V_{x}+\bar{v}+\hat{v}$, by using $\eqref{1.6}_{2}$, \eqref{2.6}-\eqref{2.8} and \eqref{3.2}, one can immediately obtain \eqref{3.3}.
\end{proof}
Now we turn to establish \eqref{3.1a}-\eqref{3.1b}, which will be given by series of lemmas.
\begin{lemma}\label{L3.2}
Under the assumptions of Proposition \ref{p1}, if $N(T) \leq \varepsilon^{2}$ and $\delta$ are small enough, it holds that
\begin{equation}\label{3.4}
   \|V(t)\|_{2}^{2}+\|V_{t}(t)\|_{1}^{2}+\int_{0}^{t}(\|V_{x}(\tau)\|_{1}^{2}+\|V_{t}(\tau)\|_{1}^{2}){\rm d}\tau \leq  C\left(\|V_{0}\|^{2}_{2}+\|z_{0}\|_{1}^{2}+ \delta \right),
  \end{equation}
  for $0 \leq t \leq T$.
\end{lemma}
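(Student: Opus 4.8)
The plan is to derive \eqref{3.4} via a basic $L^2$ energy estimate on the wave-type equation \eqref{1.23}, treating the right-hand side $F_1+F_2$ as a perturbation. First I would multiply $\eqref{1.23}_1$ (equivalently work directly with $V_t=z$ and $\eqref{1.22}_2$) by $V_t$ and integrate over $\mathbb{R}$. Since $\alpha=1$, this produces $\frac{d}{dt}\int_{\mathbb{R}}\bigl(\tfrac12 V_t^2-\tfrac12 p'(\bar v)V_x^2\bigr){\rm d}x+\int_{\mathbb{R}}V_t^2{\rm d}x$ on the left (note $-p'(\bar v)>0$ by \eqref{1.25b}, so $-\tfrac12 p'(\bar v)V_x^2$ is a genuine positive energy term), plus a commutator term $-\tfrac12\int_{\mathbb{R}}p'(\bar v)_t V_x^2{\rm d}x$ coming from the time derivative falling on $p'(\bar v)$; by Lemma \ref{L2.1} this is bounded by $C(1+t)^{-1}\|V_x(t)\|^2$, hence integrable in time against the dissipation once combined with later estimates, or absorbable using smallness of $\delta$. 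On the right one gets $\int_{\mathbb{R}}(F_1+F_2)V_t{\rm d}x$.

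Next I would add a multiplier of $V$ itself (times $\eqref{1.22}_2$, or equivalently the standard trick of pairing with $V$ and integrating by parts) to recover dissipation on $V_x$: this yields $\frac{d}{dt}\int_{\mathbb{R}}\bigl(V V_t+\tfrac12 V^2\bigr){\rm d}x$-type terms together with $\int_{\mathbb{R}}(-p'(\bar v))V_x^2{\rm d}x$ as a good term, at the cost of a $-\int_{\mathbb{R}}V_t^2{\rm d}x$ term which is controlled by the dissipation from the first estimate after choosing the linear combination coefficient small. To close at the $H^2$ level one repeats the procedure after differentiating \eqref{1.23} once and twice in $x$, obtaining the analogous identities for $V_x,V_{xx}$ and their time derivatives, with extra commutator terms involving $\partial_x^j p'(\bar v)$ which decay by Lemma \ref{L2.1}. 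Summing the three levels with suitable small weights gives the left-hand side of \eqref{3.4} controlled by the initial data plus the accumulated contributions of $F_1$ and $F_2$.

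The main work — and the main obstacle — is estimating the forcing terms $\int_{\mathbb{R}}\partial_x^j(F_1+F_2)\,\partial_x^j V_t\,{\rm d}x$ and the corresponding terms with the $V$-multiplier. For $F_1$, the piece $\tfrac1\alpha p(\bar v)_{xt}$ is a pure diffusion-wave source with known $L^2$ decay from Lemma \ref{L2.1} (it contributes an integrable-in-time quantity, controlled by $\delta$), while the quadratic remainder $\bigl(p(V_x+\bar v+\hat v)-p(\bar v)-p'(\bar v)V_x\bigr)_x$ is, by Taylor expansion, of the form $O(|V_x|^2+|V_x||\bar v_x|+|V_x||\hat v_x|+\dots)$ differentiated once, so its pairing is cubic in the solution or controlled by $\varepsilon$ or $\delta$ smallness after integrating by parts to move one derivative. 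For $F_2=\bigl(g(z+\bar u+\hat u)f(V_x+\bar v+\hat v)\bigr)_x$ I would invoke Lemma \ref{L3.1}: since $g(0)=g'(0)=0$, $g(u)=O(|u|^2)$ and the combination $g'f$, $gf'$ and their derivatives all carry either a factor $(1+t)^{-1/2}$ (from $\bar u$-type decay) or $\varepsilon$ or ${\rm e}^{-t}$. The genuinely awkward contributions are the exponentially decaying ones such as $C{\rm e}^{-t}\|V(t)\|_2^2$ (from the $\hat u$, $\hat v$ correction terms, whose $L^\infty$ size involves $|u_\pm|$, not assumed small), and — at the top order — a term of the form $\tfrac12\frac{d}{dt}\int_{\mathbb{R}}gf'V_{xx}^2{\rm d}x$ that cannot be absorbed by the dissipation alone; following Remark \ref{R3} this is exactly where \eqref{1.25a} enters, guaranteeing $gf'-p'>0$ so that the bad term is absorbed into $-\tfrac12\frac{d}{dt}\int_{\mathbb{R}}p'(\bar v)V_{xx}^2{\rm d}x$, turning the combined highest-order energy into a positive definite quantity. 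The ${\rm e}^{-t}$ terms are handled not by absorption but by a Gronwall argument at the very end. Collecting everything and choosing $\varepsilon$, $\delta$ small yields \eqref{3.4}.
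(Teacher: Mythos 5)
Your proposal follows the same route as the paper: pairing with $V$ and $V_t$ (and their once-$x$-differentiated analogues via the $-V_{xx}$ and $V_{xt}$ multipliers), using Lemma \ref{L2.1} for the diffusion-wave commutators, Lemma \ref{L3.1} for $F_2$, assumption \eqref{1.25a} to render the highest-order energy $\bigl(gf'-p'(\bar v)\bigr)V_{xx}^2$ positive, and Gronwall for the ${\rm e}^{-t}\|V\|_2^2$ terms whose constants involve $|u_\pm|$. The only minor inaccuracy is that closing at the $H^2\times H^1$ level of \eqref{3.4} requires differentiating the equation only once in $x$ (as the paper does), not twice; you have correctly identified the two genuinely new ingredients and the proof goes through.
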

\begin{proof}
 Firstly, multiplying $\eqref{1.23}_{1}$ by $V$ and integrating it with respect to $x$ over $\mathbb{R}$, we obtain
 \begin{align}\label{3.5}
   \frac{{\rm d}}{{\rm d}t}\int_{\mathbb{R}} \left(\frac{V^{2}}{2}+VV_{t}\right){\rm d} x-\int_{\mathbb{R}} p^\prime(\bar{v})V_{x}^{2}{\rm d}x =\int_{\mathbb{R}}V_{t}^{2}{\rm d}x+\int_{\mathbb{R}}F_{1}V{\rm d}x+\int_{\mathbb{R}}F_{2}V{\rm d} x .
 \end{align}
 In fact, the estimates of $\int_{\mathbb{R}}F_{1}V_{t}{\rm d} x$, $\int_{\mathbb{R}}F_{1}V{\rm d} x$, $\int_{\mathbb{R}}F_{1}V_{xx}{\rm d} x$, $\int_{\mathbb{R}}F_{1x}V_{xt}{\rm d} x$, $\int_{\mathbb{R}}F_{1x}V_{xxx}{\rm d} x$, $\int_{\mathbb{R}}F_{1xx}V_{xxt}{\rm d} x$, $\int_{\mathbb{R}}F_{1t}V_{tt}{\rm d} x$, $\int_{\mathbb{R}}F_{1t}V_{t}{\rm d} x$, $\int_{\mathbb{R}}F_{1xt}V_{xtt}{\rm d} x$, $\int_{\mathbb{R}}F_{1xt}V_{xt}{\rm d} x$ and $\int_{\mathbb{R}}F_{1tt}z_{tt}{\rm d} x$ have exactly shown in \cite{Nishihara1996}. For completeness, let's write
  \begin{equation}\label{3.6}
  \begin{split}
   \int_{\mathbb{R}} F_{1}V{\rm d} x &= \int_{\mathbb{R}}\left[-p^\prime(\bar{v})\bar{v}_{t}+p(V_{x}+\bar{v}+\hat{v})-p(\bar{v})-p^\prime(\bar{v})V_{x}\right]V_{x}{\rm d} x \\&
   \leq C(\varepsilon+\delta)\|V_{x}(t)\|^{2}+C\delta (1+t)^{-\frac{3}{2}}.
   \end{split}
  \end{equation}
  While, as for $F_{2}$ in \eqref{1.25},  by using \eqref{1.16} and \eqref{1.21}, we have
  \begin{equation}\label{3.7}
   F_{2}= \left(gf\right)_{x}=g^\prime f(V_{xt}-p(\bar{v})_{xx}+\hat{v}_{t})+gf^\prime(V_{xx}+\bar{v}_{x}+\hat{v}_{x}),
  \end{equation}
  then
  \begin{align}\label{3.8}
   \int_{\mathbb{R}}F_{2}V{\rm d}x=&\int_{\mathbb{R}}[g^\prime f(V_{xt}-p(\bar{v})_{xx}+\hat{v}_{t})+gf^\prime(V_{xx}+\bar{v}_{x}+\hat{v}_{x})]V{\rm d}x\nonumber\\
   =&\int_{\mathbb{R}}g^\prime fV_{xt}V{\rm d}x+\int_{\mathbb{R}}g^\prime f(-p(\bar{v})_{xx}+\hat{v}_{t})V{\rm d}x+\int_{\mathbb{R}}gf^\prime(V_{xx}+\bar{v}_{x}+\hat{v}_{x})V{\rm d}x\nonumber\\
   :=&I_{1}+I_{2}+I_{3}.
  \end{align}
  By using \eqref{1.25b}-\eqref{1.25c}, \eqref{2.6}-\eqref{3.2} and Taylor's expansion, we can derive that
  \begin{align}\label{3.9}
   I_{1}=&\int_{\mathbb{R}}g^\prime fV_{xt}V{\rm d}x\nonumber\\
   =&-\int_{\mathbb{R}}g^\prime fV_{t}V_{x}{\rm d}x-\int_{\mathbb{R}}VV_{t}[g^{\prime\prime} f(V_{xt}-p(\bar{v})_{xx}+\hat{v}_{t})+g^\prime f^\prime(V_{xx}+\bar{v}_{x}+\hat{v}_{x})]{\rm d}x\nonumber\\
   \leq&C\int_{\mathbb{R}}|V_{t}V_{x}|{\rm d}x+\frac{1}{2}\int_{\mathbb{R}}V_{t}^{2}(Vg^{\prime\prime}f)_{x}{\rm d}x+C\int_{\mathbb{R}}|VV_{t}|(\bar{v}_{x}^{2}+|\bar{v}_{xx}|+|\hat{v}_{t}|){\rm d}x \nonumber\\
   &+C\int_{\mathbb{R}}|VV_{t}|(|V_{t}|+|\bar{v}_{x}|+|\hat{u}|)(|V_{xx}|+|\bar{v}_{x}|+|\hat{v}_{x}|){\rm d}x \nonumber\\
   \leq& -\frac{p^\prime(\bar{v})}{32}\int_{\mathbb{R}}V_{x}^{2}{\rm d}x+C\int_{\mathbb{R}}V_{t}^{2}{\rm d}x+C\|V(t)\|_{L^{\infty}}^{2}\int_{\mathbb{R}}(\bar{v}_{x}^{4}+\bar{v}_{xx}^{2}+\hat{v}_{t}^{2}){\rm d}x\nonumber\\
   &+C\|VV_{xx}(t)\|_{L^{\infty}}^{2}\int_{\mathbb{R}}\bar{v}_{x}^{2}{\rm d}x+C\int_{0}^{\infty}\hat{v}_{x}^{2}{\rm d}x+\|\hat{u}(t)\|_{L^{\infty}}^{2}\int_{\mathbb{R}}(V_{xx}^{2}+\bar{v}_{x}^{2}){\rm d}x\nonumber\\
   \leq& -\frac{p^\prime(\bar{v})}{16}\|V_{x}(t)\|^{2} +C\|V_{t}(t)\|^{2}+C{\rm e}^{-t}\|V_{xx}(t)\|^{2}+ C\delta (1+t)^{-2},
  \end{align}
  \begin{align}\label{3.10}
   I_{2}=&\int_{\mathbb{R}}g^\prime f(-p(\bar{v})_{xx}+\hat{v}_{t})V{\rm d}x\nonumber\\
   \leq&C\int_{\mathbb{R}}(|V_{t}|+|\bar{v}_{x}|+|\hat{u}|)(\bar{v}_{x}^{2}+|\bar{v}_{xx}|+|\hat{v}_{t}|)|V|{\rm d}x\nonumber\\
   \leq&\|V_{t}(t)\|^{2}+C\delta (1+t)^{-2}\|V(t)\|^{2}+C\delta (1+t)^{-1}\|\bar{v}_{x}(t)\|\|V(t)\|\nonumber\\
   &+C\|\hat{u}(t)\|_{L^{\infty}}(\|\bar{v}_{x}(t)\|^{2}+\|\bar{v}_{xx}(t)\|_{L^{1}}+\|\hat{v}_{t}(t)\|_{L^{1}})\nonumber\\
   \leq&\|V_{t}(t)\|^{2}+C\delta (1+t)^{-\frac{5}{4}},
  \end{align}
  and
  \begin{align}\label{3.11}
    I_{3}=&\int_{\mathbb{R}}gf^\prime(V_{xx}+\bar{v}_{x}+\hat{v}_{x})V{\rm d}x\nonumber\\
    \leq&C\int_{\mathbb{R}}(V_{t}-p(\bar{v})_{x}+\hat{u})^{2}(|V_{xx}|+|\bar{v}_{x}|+|\hat{v}_{x}|)|V|{\rm d}x\nonumber\\
    \leq&C\|V_{t}(t)\|^{2}+C\int_{\mathbb{R}}[|V_{t}|(|\bar{v}_{x}|+|\hat{u}|)+|\bar{v}_{x}|^{2}+|\bar{v}_{x}||\hat{u}|](|V_{xx}|+|\bar{v}_{x}|+|\hat{v}_{x}|)|V|{\rm d} x \nonumber\\
  &+C\|\hat{u}(t)\|_{L^{\infty}}^{2}\int_{\mathbb{R}}|V_{xx}V|{\rm d} x+C\|\hat{u}(t)\|_{L^{\infty}}^{2}\int_{\mathbb{R}}|V|(|\bar{v}_{x}|+|\hat{v}_{x}|){\rm d} x\nonumber\\
  \leq& C\|V_{t}(t)\|^{2}+C{\rm e}^{-t}\|V(t)\|_{2}^{2}+ C\delta (1+t)^{-2}+C\delta (1+t)^{-1}\|\bar{v}_{x}(t)\|\|V(t)\|\nonumber\\
  \leq& C\|V_{t}(t)\|^{2}+C{\rm e}^{-t}\|V(t)\|_{2}^{2}+ C\delta (1+t)^{-\frac{5}{4}}.
  \end{align}
  Substituting \eqref{3.6} and \eqref{3.8}-\eqref{3.11} into \eqref{3.5}, we obtain
\begin{equation}\label{3.12}
 \frac{{\rm d}}{{\rm d}t}\int_{\mathbb{R}} \left(\frac{V^{2}}{2}+VV_{t}\right){\rm d} x-\frac{3}{4}\int_{\mathbb{R}} p^\prime(\bar{v})V_{x}^{2}{\rm d}x \leq C\|V_{t}(t)\|^{2}+C{\rm e}^{-t}\|V(t)\|_{2}^{2}+ C\delta (1+t)^{-\frac{5}{4}}.
\end{equation}
Next, multiplying $\eqref{1.23}_{1}$ by $V_{t}$ and integrating it with respect to $x$ over $\mathbb{R}$, after some integrations by parts, we get
\begin{equation}\label{3.13}
  \frac{1}{2}\frac{{\rm d}}{{\rm d}t}\int_{\mathbb{R}} \left(V_{t}^{2}-p^\prime(\bar{v})V_{x}^{2}\right){\rm d} x+\int_{\mathbb{R}} V_{t}^{2}{\rm d} x=-\frac{1}{2}\int_{\mathbb{R}}p^{\prime\prime}(\bar{v})\bar{v}_{t}V_{x}^{2}{\rm d} x+\int_{\mathbb{R}}F_{1}V_{t}{\rm d}x+\int_{\mathbb{R}}F_{2}V_{t}{\rm d}x .
  \end{equation}
  We now estimate the righthand side of \eqref{3.13} term by term. Firstly, we have from \eqref{2.6} that
   \begin{equation}\label{3.14}
   -\frac{1}{2}\int_{\mathbb{R}}p^{\prime\prime}(\bar{v})\bar{v}_{t}V_{x}^{2}{\rm d} x \leq C\delta (1+t)^{-1}\|V_{x}(t)\|^{2}.
   \end{equation}
  As exactly shown in \cite{Nishihara1996}, we have
  \begin{equation}\label{3.15}
   \begin{split}
    \int_{\mathbb{R}} F_{1}V_{t}{\rm d}x&=\int_{\mathbb{R}} p(\bar{v})_{xt}V_{t}{\rm d} x+\frac{{\rm d}}{{\rm d}t}\int_{\mathbb{R}}\left[\int_{\bar{v}}^{V_{x}+\bar{v}+\hat{v}}p(s){\rm d}s-p(\bar{v})V_{x}-\frac{p^\prime(\bar{v})}{2}V_{x}^{2}\right]{\rm d}x \\&
    ~~~~+\int_{\mathbb{R}}\left[-p(V_{x}+\bar{v}+\hat{v})+p(\bar{v})+p^\prime(\bar{v})V_{x}+\frac{p^{\prime\prime}(\bar{v})}{2}V_{x}^{2}\right]\bar{v}_{t}{\rm d} x-\int_{\mathbb{R}}p(V_{x}+\bar{v}+\hat{v})\hat{v}_{t}{\rm d} x\\&
    \leq \frac{1}{16}\|V_{t}(t)\|^{2}+C(\varepsilon +\delta) (1+t)^{-1} \|V_{x}(t)\|^{2}+C\delta (1+t)^{-\frac{5}{2}}\\&
    ~~~~+\frac{{\rm d}}{{\rm d}t}\int_{\mathbb{R}}\left[\int_{\bar{v}}^{V_{x}+\bar{v}+\hat{v}}p(s){\rm d}s-p(\bar{v})V_{x}-\frac{p^\prime(\bar{v})}{2}V_{x}^{2}\right]{\rm d}x.
    \end{split}
  \end{equation}
  Now we deal with the last term of the righthand side of \eqref{3.13}. Notice that
  \begin{align}\label{3.16}
   \int_{\mathbb{R}}F_{2}V_{t}{\rm d}x=&\int_{\mathbb{R}}g^\prime f(V_{xt}-p(\bar{v})_{xx}+\hat{v}_{t})V_{t}{\rm d}x+\int_{\mathbb{R}}gf^\prime(V_{xx}+\bar{v}_{x}+\hat{v}_{x})V_{t}{\rm d}x\nonumber\\
   :=&I_{4}+I_{5}.
  \end{align}
  From \eqref{1.25b}-\eqref{1.25c} and \eqref{2.6}-\eqref{3.2} and Taylor's expansion, we can derive
  \begin{align}\label{3.17}
   I_{4}=&\int_{\mathbb{R}}g^\prime f(V_{xt}-p(\bar{v})_{xx}+\hat{v}_{t})V_{t}{\rm d}x\nonumber\\
   \leq&\int_{\mathbb{R}}g^\prime fV_{xt}V_{t}{\rm d}x+C\int_{\mathbb{R}}(|V_{t}|+|\bar{v}_{x}|+|\hat{u}|)(\bar{v}_{x}^{2}+|\bar{v}_{xx}|+|\hat{v}_{t}|)|V_{t}|{\rm d}x\nonumber\\
   \leq&-\frac{1}{2}\int_{\mathbb{R}}(g^\prime f)_{x}V_{t}^{2}{\rm d}x+\frac{1}{32}\int_{\mathbb{R}}V_{t}^{2}{\rm d}x+C(1+t)^{-1}\int_{\mathbb{R}}(\bar{v}_{x}^{4}+\bar{v}_{xx}^{2}+\hat{v}_{t}^{2}){\rm d}x\nonumber\\
   \leq&\frac{1}{16}\|V_{t}(t)\|^{2}+C\delta (1+t)^{-\frac{5}{2}},
  \end{align}
  and
  \begin{align}\label{3.18}
    I_{5}=&\int_{\mathbb{R}}gf^\prime(V_{xx}+\bar{v}_{x}+\hat{v}_{x})V_{t}{\rm d}x\nonumber\\
    \leq&C\int_{\mathbb{R}}(V_{t}-p(\bar{v})_{x}+\hat{u})^{2}(|V_{xx}|+|\bar{v}_{x}|+|\hat{v}_{x}|)|V_{t}|{\rm d}x\nonumber\\
    \leq&C(\varepsilon +\delta)\int_{\mathbb{R}}V_{t}^{2}{\rm d}x+C\int_{\mathbb{R}}(|\bar{v}_{x}|^{2}+|\bar{v}_{x}||\hat{u}|)(|V_{xx}|+|\bar{v}_{x}|+|\hat{v}_{x}|)|V_{t}|{\rm d} x \nonumber\\
  &+C\|\hat{u}(t)\|_{L^{\infty}}^{2}\int_{\mathbb{R}}|V_{xx}V_{t}|{\rm d} x+C\|\hat{u}(t)\|_{L^{\infty}}^{2}\int_{\mathbb{R}}|V_{t}|(|\bar{v}_{x}|+|\hat{v}_{x}|){\rm d} x\nonumber\\
  \leq&\frac{1}{16}\|V_{t}(t)\|^{2}+C{\rm e}^{-t}\|V_{xx}(t)\|^{2}+C\delta (1+t)^{-\frac{5}{2}}.
  \end{align}
  Substituting \eqref{3.14}-\eqref{3.18} into \eqref{3.13}, we have
  \begin{align}\label{3.19}
   &\frac{1}{2}\frac{{\rm d}}{{\rm d}t}\int_{\mathbb{R}} \left(V_{t}^{2}-p^\prime(\bar{v})V_{x}^{2}\right){\rm d} x+\frac{3}{4}\int_{\mathbb{R}} V_{t}^{2}{\rm d}x\nonumber\\
   \leq& C(\varepsilon +\delta) (1+t)^{-1} \|V_{x}(t)\|^{2}+C\delta (1+t)^{-\frac{5}{2}}+C{\rm e}^{-t}\|V_{xx}(t)\|^{2}\nonumber\\
   &+\frac{{\rm d}}{{\rm d}t}\int_{\mathbb{R}}\left[\int_{\bar{v}}^{V_{x}+\bar{v}+\hat{v}}p(s){\rm d}s-p(\bar{v})V_{x}-\frac{p^\prime(\bar{v})}{2}V_{x}^{2}\right]{\rm d}x.
  \end{align}
  Addition of $\lambda \cdot \eqref{3.12}$, $0<\lambda \ll 1$ to \eqref{3.19} yields
  \begin{align}\label{3.20}
    &\frac{1}{2}\frac{{\rm d}}{{\rm d}t}\int_{\mathbb{R}} \left(V_{t}^{2}+\lambda V^{2}+2\lambda VV_{t}-p^\prime(\bar{v})V_{x}^{2}\right){\rm d} x+\frac{1}{2}\int_{\mathbb{R}}(V_{t}^{2}-\lambda p^\prime(\bar{v})V_{x}^{2}){\rm d}x\nonumber\\
     \leq& C{\rm e}^{-t}\|V(t)\|_{2}^{2}+ C\delta (1+t)^{-\frac{5}{4}}+\frac{{\rm d}}{{\rm d}t}\int_{\mathbb{R}}\left[\int_{\bar{v}}^{V_{x}+\bar{v}+\hat{v}}p(s){\rm d}s-p(\bar{v})V_{x}-\frac{p^\prime(\bar{v})}{2}V_{x}^{2}\right]{\rm d}x.
  \end{align}
  Integrating \eqref{3.20} respect to $t$ over $[0,t]$, we have
  \begin{equation}\label{3.21}
   \begin{split}
    \|V(t)\|_{1}^{2}&+\|V_{t}(t)\|^{2}+\int_{0}^{t}(\|V_{x}(\tau)\|^{2}+\|V_{t}(\tau)\|^{2}){\rm d}\tau \\&
     \leq  C\left(\|V_{0}\|^{2}_{1}+\|z_{0}\|^{2}+ \delta \right)+C\int_{0}^{t}{\rm e}^{-\tau}\|V(\tau)\|_{2}^{2}{\rm d}\tau .
    \end{split}
  \end{equation}
  Now we consider the higher order energy estimates. Multiplying $\eqref{1.23}_{1}$ by $-V_{xx}$ and integrating it with respect to $x$ over $\mathbb{R}$, we obtain
  \begin{equation}\label{3.22}
  \begin{split}
  \frac{1}{2}\frac{{\rm d}}{{\rm d}t}\int_{\mathbb{R}} \left(V_{x}^{2}+2V_{x}V_{xt}\right){\rm d} x-\int_{\mathbb{R}}p^\prime(\bar{v})V_{xx}^{2} {\rm d}x=&\int_{\mathbb{R}}V_{xt}^{2}{\rm d} x+\int_{\mathbb{R}}(p^\prime(\bar{v})_{x}V_{x}V_{xx}-F_{1}V_{xx}){\rm d}x\\
  &-\int_{\mathbb{R}}F_{2}V_{xx}{\rm d}x.
  \end{split}
  \end{equation}
  We estimate the right hand side of \eqref{3.22} as follows. Firstly, it is easy to see that
  \begin{equation}\label{3.23}
   \int_{\mathbb{R}}(p^\prime(\bar{v})_{x}V_{x}V_{xx}-F_{1}V_{xx}){\rm d}x\leq -\frac{p^\prime(\bar{v})}{16}\|V_{xx}(t)\|^{2}+C\delta(1+t)^{-1}\|V_{x}(t)\|^{2}+C\delta (1+t)^{-\frac{5}{2}},
  \end{equation}
  then from \eqref{1.25b}-\eqref{1.25c} and \eqref{2.6}-\eqref{3.3}, we have
  \begin{align}\label{3.24}
  &-\int_{\mathbb{R}}F_{2}V_{xx}{\rm d}x\nonumber\\
  =&-\int_{\mathbb{R}}g^\prime f(V_{xt}-p(\bar{v})_{xx}+\hat{v}_{t})V_{xx}{\rm d}x-\int_{\mathbb{R}}gf^\prime(V_{xx}+\bar{v}_{x}+\hat{v}_{x})V_{xx}{\rm d}x\nonumber\\
  \leq& C\int_{\mathbb{R}}|V_{xt}||V_{xx}|{\rm d}x+C(1+t)^{-\frac{1}{2}}\int_{\mathbb{R}}(|\bar{v}_{xx}|+|\bar{v}_{x}|^{2}+|\hat{v}_{t}|)|V_{xx}|{\rm d}x \nonumber\\
  &+C(\varepsilon +\delta)\int_{\mathbb{R}}V_{xx}^{2}{\rm d}x+C{\rm e}^{-t}\int_{\mathbb{R}}V_{xx}^{2}{\rm d}x+C(1+t)^{-1}\int_{\mathbb{R}}(|\bar{v}_{x}|+|\hat{v}_{x}|)|V_{xx}|{\rm d}x\nonumber\\
  \leq& -\frac{p^\prime(\bar{v})}{16}\|V_{xx}(t)\|^{2}+C\|V_{xt}(t)\|^{2}+C{\rm e}^{-t}\|V_{xx}(t)\|^{2}+C\delta (1+t)^{-\frac{5}{2}}.
  \end{align}
  Substituting \eqref{3.23}-\eqref{3.24} into \eqref{3.22}, we have
  \begin{equation}\label{3.25}
  \begin{split}
     &\frac{1}{2}\frac{{\rm d}}{{\rm d}t}\int_{\mathbb{R}}(V_{x}^{2}+2V_{x}V_{xt}){\rm d}x-\frac{3}{4}\int_{\mathbb{R}}p^\prime(\bar{v})V_{xx}^{2} {\rm d}x \\
     \leq& C\|V_{xt}(t)\|^{2}+C{\rm e}^{-t}\|V_{xx}(t)\|^{2}+C\delta(1+t)^{-1}\|V_{x}(t)\|^{2}+C\delta (1+t)^{-\frac{5}{2}}.
     \end{split}
  \end{equation}
  Then the calculations of $\int_{\mathbb{R}}\eqref{1.23}_{1x}\times V_{xt}{\rm d}x$ gives
  \begin{equation}\label{3.26}
  \begin{split}
   \frac{1}{2}\frac{{\rm d}}{{\rm d}t}\int_{\mathbb{R}}\left(V_{xt}^{2}-p^\prime(\bar{v})V_{xx}^{2}\right){\rm d}x+\int_{\mathbb{R}}V_{xt}^{2}{\rm d}x=&-\frac{1}{2}\int_{\mathbb{R}}p^{\prime\prime}(\bar{v})\bar{v}_{t}V_{xx}^{2}{\rm d}x+\int_{\mathbb{R}}\left(F_{1}-p^\prime(\bar{v})_{x}V_{x}\right)_{x}V_{xt}{\rm d}x\\
   &+\int_{\mathbb{R}}F_{2x}V_{xt}{\rm d}x.
   \end{split}
  \end{equation}
  We estimate the right hand side of \eqref{3.26} as follows. Firstly, by applying \eqref{2.6}, one gets
 \begin{equation}\label{3.27}
 -\frac{1}{2}\int_{\mathbb{R}}p^{\prime\prime}(\bar{v})\bar{v}_{t}V_{xx}^{2}{\rm d}x \leq C\delta(1+t)^{-1}\|V_{xx}(t)\|^{2}.	
 \end{equation}
  Next from \eqref{2.6}-\eqref{2.8} and {\it a priori} assumption \eqref{3.2}, we get
  \begin{equation}\label{3.28}
   \begin{split}
    &\int_{\mathbb{R}}\left(F_{1}-p^\prime(\bar{v})_{x}V_{x}\right)_{x}V_{xt}{\rm d}x \\
     \leq& \frac{1}{16} \|V_{xt}(t)\|^{2}+\frac{1}{2}\frac{{\rm d}}{{\rm d}t}\int_{\mathbb{R}}\left[p^\prime(V_{x}+\bar{v}+\hat{v})-p^\prime(\bar{v})\right]V_{xx}^{2}{\rm d}x+C\delta (1+t)^{-\frac{7}{2}}\\
    &+C(\varepsilon +\delta)(1+t)^{-1}\|V_{xx}(t)\|^{2}+C\delta(1+t)^{-2}\|V_{x}(t)\|^{2}.
    \end{split}
  \end{equation}
  Now we estimate the last term in the right hand of \eqref{3.26}. From \eqref{2.6}-\eqref{3.3}, we can derive that
  \begin{align}\label{3.29}
    &\int_{\mathbb{R}}F_{2x}V_{xt}{\rm d}x\nonumber\\
    =&\int_{\mathbb{R}}(g^\prime fV_{xt}+gf^\prime V_{xx})_{x}V_{xt}{\rm d}x+\int_{\mathbb{R}}[g^\prime f(-p(\bar{v})_{xx}+\hat{v}_{t})]_{x}V_{xt}{\rm d}x+\int_{\mathbb{R}}[gf^\prime(\bar{v}_{x}+\hat{v}_{x})]_{x}V_{xt}{\rm d}x\nonumber\\
    =&-\int_{\mathbb{R}}(g^\prime fV_{xt}+gf^\prime V_{xx})V_{xxt}{\rm d}x+C(1+t)^{-\frac{1}{2}}\int_{\mathbb{R}}(|\bar{v}_{xxx}|+|\bar{v}_{x}||\bar{v}_{xx}|+|\bar{v}_{x}|^{3}+|\hat{v}_{xt}|)|V_{xt}|{\rm d}x \nonumber\\
    &+C(1+t)^{-1}\int_{\mathbb{R}}(|\bar{v}_{xx}|+|\bar{v}_{x}|^{2}+|\hat{v}_{t}|+|\hat{v}_{xx}|)|V_{xt}|{\rm d}x+C(1+t)^{-\frac{3}{2}}\int_{\mathbb{R}}(|\bar{v}_{x}|+|\hat{v}_{x}|)|V_{xt}|{\rm d}x \nonumber\\
   \leq&\frac{1}{2}\int_{\mathbb{R}}\left(g^\prime f\right)_{x}V_{xt}^{2}{\rm d}x-\frac{1}{2}\frac{{\rm d}}{{\rm d}t}\int_{\mathbb{R}}gf^\prime V_{xx}^{2}{\rm d}x+\frac{1}{2}\int_{\mathbb{R}}\left(gf^\prime\right)_{t}V_{xx}^{2}{\rm d}x+\frac{1}{32}\int_{\mathbb{R}}V_{xt}^{2}{\rm d}x+C\delta (1+t)^{-\frac{7}{2}} \nonumber\\
     \leq& \frac{1}{16}\|V_{xt}(t)\|^{2}+ C(\varepsilon +\delta)(1+t)^{-2}\|V_{xx}(t)\|^{2}-\frac{1}{2}\frac{{\rm d}}{{\rm d}t}\int_{\mathbb{R}}gf^\prime V_{xx}^{2}{\rm d}x\nonumber\\
    &+C{\rm e}^{-t}\|V_{xx}(t)\|^{2}+C\delta (1+t)^{-\frac{7}{2}}.
  \end{align}
  Substituting \eqref{3.27}-\eqref{3.29} into \eqref{3.26}, we have
  \begin{align}\label{3.30}
    &\frac{1}{2}\frac{{\rm d}}{{\rm d}t}\int_{\mathbb{R}}\left[V_{xt}^{2}+\left(gf^\prime-p^\prime(\bar{v})\right)V_{xx}^{2}\right]{\rm d}x+\frac{3}{4}\int_{\mathbb{R}}V_{xt}^{2}{\rm d}x \nonumber\\
    \leq& \frac{1}{2}\frac{{\rm d}}{{\rm d}t}\int_{\mathbb{R}}\left[p^\prime(V_{x}+\bar{v}+\hat{v})-p^\prime(\bar{v})\right]V_{xx}^{2}{\rm d}x+C(\varepsilon +\delta)(1+t)^{-1}\|V_{xx}(t)\|^{2}\nonumber\\
    &+C\delta(1+t)^{-2}\|V_{x}(t)\|^{2}+C{\rm e}^{-t}\|V_{xx}(t)\|^{2}+C\delta (1+t)^{-\frac{7}{2}}.
  \end{align}
  Addition of $\lambda \cdot \eqref{3.25}$ to \eqref{3.30} $(0<\lambda \ll 1)$, one has that
  \begin{equation}\label{3.31}
   \begin{split}
   &\frac{1}{2}\frac{{\rm d}}{{\rm d}t}\int_{\mathbb{R}} \left[V_{xt}^{2}+\lambda V_{x}^{2}+2\lambda V_{xt}V_{x}+\left(gf^\prime-p^\prime(\bar{v})\right)V_{xx}^{2}\right]{\rm d}x+\frac{1}{2}\int_{\mathbb{R}} \left(V_{xt}^{2}-\lambda p^\prime(\bar{v})V_{xx}^{2}\right){\rm d} x \\
   \leq& \frac{1}{2}\frac{{\rm d}}{{\rm d}t}\int_{\mathbb{R}}\left[p^\prime(V_{x}+\bar{v}+\hat{v})-p^\prime(\bar{v})\right]V_{xx}^{2}{\rm d}x+ C\delta (1+t)^{-\frac{5}{2}}+C\delta(1+t)^{-1}\|V_{x}(t)\|^{2}\\&
   ~~~~+C{\rm e}^{-t}\|V_{xx}(t)\|^{2} .
    \end{split}
  \end{equation}
   Since $\varepsilon +\delta \ll 1$, by using \eqref{1.25a}, \eqref{2.6}-\eqref{3.2}, it is easy to obtain that
   \begin{equation}\label{3.32}
   	gf^\prime-p^\prime(\bar{v})\geq c_{0}>0,
   \end{equation}
   where $c_{0}$ is only depend on $|u_{\pm}|$ and $|v_{\pm}|$.
   Integrating \eqref{3.31} over $[0,t]$ and using \eqref{3.32}, we have
  \begin{equation}\label{3.33}
   \begin{split}
   \|V_{x}(t)\|&_{1}^{2}+\|V_{xt}(t)\|^{2}+\int_{0}^{t}(\|V_{xx}(\tau)\|^{2}+\|V_{xt}(\tau)\|^{2}){\rm d}\tau \\&
   \leq  C\left(\|V_{0}\|^{2}_{2}+\|z_{0}\|_{1}^{2}+ \delta \right)+C\int_{0}^{t}{\rm e}^{-\tau}\|V_{xx}(\tau)\|^{2}{\rm d}\tau + C\delta \int_{0}^{t}\|V_{x}(\tau)\|^{2}{\rm d}\tau .
    \end{split}
  \end{equation}
  It follows from \eqref{3.21} and \eqref{3.33} that
  \begin{equation}\label{3.34}
   \begin{split}
   \|V(t)\|_{2}^{2}+&\|V_{t}(t)\|_{1}^{2}+\int_{0}^{t}(\|V_{x}(\tau)\|_{1}^{2}+\|V_{t}(\tau)\|_{1}^{2}){\rm d}\tau \\&
   \leq  C\left(\|V_{0}\|^{2}_{2}+\|z_{0}\|_{1}^{2}+ \delta \right)+C\int_{0}^{t}{\rm e}^{-\tau}\|V(\tau)\|_{2}^{2}{\rm d}\tau ,
    \end{split}
  \end{equation}
  which implies by Gronwall's inequality that
  \begin{equation}\notag
   \sup \limits_{0 \leq t \leq T}\{\|V(t)\|_{2}^{2}+\|V_{t}(t)\|_{1}^{2}\} \leq C\left(\|V_{0}\|^{2}_{2}+\|z_{0}\|_{1}^{2}+ \delta \right).
  \end{equation}
  Then combining the above two equations, one can obtain \eqref{3.4}. The proof of Lemma 3.2 is completed.
\end{proof}
\begin{lemma}\label{L3.3}
Under the assumptions of Proposition \ref{p1}, if $N(T) \leq \varepsilon^{2}$ and $\delta$ are small enough, it holds that
\begin{equation}\label{3.35}
   (1+t)(\|V_{x}(t)\|^{2}+\|V_{t}(t)\|^{2})+\int_{0}^{t}(1+\tau)\|V_{t}(\tau)\|^{2}{\rm d}\tau \leq  C\left(\|V_{0}\|^{2}_{2}+\|z_{0}\|_{1}^{2}+ \delta \right),
  \end{equation}
 \begin{equation}\label{3.36}
  \begin{split}
   (1+t)^{2}&(\|V_{xx}(t)\|^{2}+\|V_{xt}(t)\|^{2})+\int_{0}^{t}\left[(1+\tau)\|V_{xx}(\tau)\|^{2}+(1+\tau)^{2}\|V_{xt}(\tau)\|^{2}\right]{\rm d}\tau \\&
   \leq  C\left(\|V_{0}\|^{2}_{2}+\|z_{0}\|_{1}^{2}+ \delta \right),
   \end{split}
  \end{equation}
  for $0 \leq t \leq T$.
\end{lemma}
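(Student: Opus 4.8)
The plan is to derive both \eqref{3.35} and \eqref{3.36} by multiplying the differential inequalities already obtained in the proof of Lemma \ref{L3.2} by suitable powers of $(1+t)$, and then closing the resulting estimates with the $L^2$-in-time bounds supplied by Lemma \ref{L3.2}. For \eqref{3.35} I would go back to \eqref{3.19}, move its exact-derivative term to the left to form an energy $\mathcal E_1(t)$ equivalent to $\|V_t(t)\|^2+\|V_x(t)\|^2$ (the subtracted integrand is cubic and $O((\varepsilon+\delta)\|V_x\|^2)$ up to exponentially small pieces, hence harmless since $\varepsilon+\delta\ll1$), so that \eqref{3.19} reads $\frac{d}{dt}\mathcal E_1+\frac34\|V_t\|^2\leq C(\varepsilon+\delta)(1+t)^{-1}\|V_x\|^2+C\delta(1+t)^{-5/2}+Ce^{-t}\|V_{xx}\|^2$. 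Multiplying by $(1+t)$ and using $\frac{d}{dt}[(1+t)\mathcal E_1]=(1+t)\frac{d}{dt}\mathcal E_1+\mathcal E_1$ converts the extra $\mathcal E_1$ into $C(\|V_t\|^2+\|V_x\|^2)$; integrating over $[0,t]$ and invoking $\int_0^t(\|V_x\|_1^2+\|V_t\|_1^2)\,d\tau\leq C(\|V_0\|_2^2+\|z_0\|_1^2+\delta)$ and $\sup_\tau\|V_{xx}(\tau)\|^2\leq C(\|V_0\|_2^2+\|z_0\|_1^2+\delta)$ from Lemma \ref{L3.2}, together with $\int_0^\infty(1+\tau)e^{-\tau}\,d\tau<\infty$ and $\int_0^\infty(1+\tau)^{-3/2}\,d\tau<\infty$, yields \eqref{3.35}.

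For \eqref{3.36} I would use a two-step scheme. First, from \eqref{3.31} I absorb the exact-derivative term into the energy to obtain $\frac{d}{dt}\widehat{\mathcal E}+c(\|V_{xt}\|^2+\|V_{xx}\|^2)\leq C\delta(1+t)^{-5/2}+C\delta(1+t)^{-1}\|V_x\|^2+Ce^{-t}\|V_{xx}\|^2$, where $\widehat{\mathcal E}=\frac12\int_{\mathbb{R}}[V_{xt}^2+\lambda V_x^2+2\lambda V_xV_{xt}+(gf'-p'(V_x+\bar v+\hat v))V_{xx}^2]\,dx$; by \eqref{1.25a} and \eqref{3.32} one has $gf'-p'(V_x+\bar v+\hat v)\geq c_0>0$, so $\widehat{\mathcal E}$ is equivalent to $\|V_{xt}\|^2+\|V_{xx}\|^2$ for $\lambda$ small. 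Multiplying by $(1+t)$, integrating, and again controlling the zeroth-weight integrals by Lemma \ref{L3.2}, I get the intermediate bound
\begin{equation}\notag
 (1+t)(\|V_{xx}(t)\|^2+\|V_{xt}(t)\|^2)+\int_0^t(1+\tau)(\|V_{xx}(\tau)\|^2+\|V_{xt}(\tau)\|^2)\,d\tau\leq C(\|V_0\|_2^2+\|z_0\|_1^2+\delta).
\end{equation}
Second, to lift the pointwise weight to $(1+t)^2$ I use \eqref{3.30} instead: absorbing its exact-derivative term gives $\frac{d}{dt}\mathcal G+\frac34\|V_{xt}\|^2\leq C(\varepsilon+\delta)(1+t)^{-1}\|V_{xx}\|^2+C\delta(1+t)^{-2}\|V_x\|^2+Ce^{-t}\|V_{xx}\|^2+C\delta(1+t)^{-7/2}$, with $\mathcal G=\frac12\int_{\mathbb{R}}[V_{xt}^2+(gf'-p'(V_x+\bar v+\hat v))V_{xx}^2]\,dx$ equivalent to $\|V_{xt}\|^2+\|V_{xx}\|^2$. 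Multiplying by $(1+t)^2$ produces $\frac{d}{dt}[(1+t)^2\mathcal G]-2(1+t)\mathcal G+\frac34(1+t)^2\|V_{xt}\|^2\leq\cdots$, where $2(1+t)\mathcal G$ and $C(\varepsilon+\delta)(1+t)\|V_{xx}\|^2$ now integrate against the intermediate bound, $C\delta\|V_x\|^2$ and $C(1+t)^2e^{-t}\|V_{xx}\|^2$ against Lemma \ref{L3.2}, and $C\delta(1+t)^{-3/2}$ is in $L^1$. Integrating yields $(1+t)^2(\|V_{xt}(t)\|^2+\|V_{xx}(t)\|^2)+\int_0^t(1+\tau)^2\|V_{xt}(\tau)\|^2\,d\tau\leq C(\|V_0\|_2^2+\|z_0\|_1^2+\delta)$, which combined with the intermediate bound is exactly \eqref{3.36}.

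The main obstacle is the interplay of the time-weights with the slowly decaying forcing $C\delta(1+t)^{-5/2}$ in \eqref{3.31}, which originates from $p(\bar v)_{xt}$: one cannot simply multiply \eqref{3.31} by $(1+t)^2$, since that term would then fail to be integrable. The two-step device circumvents this by carrying the $\|V_{xx}\|^2$-dissipation only at weight $(1+t)$ and then using \eqref{3.30} — whose energy still controls $\|V_{xx}\|^2$ but whose dissipation involves only $\|V_{xt}\|^2$ — to raise the pointwise weight on $\|V_{xx}\|^2$ to $(1+t)^2$ without ever weighting the bad term by $(1+t)^2$. A secondary point that must be checked is the positive-definiteness of $\mathcal G$ and $\widehat{\mathcal E}$, which is precisely where the structural assumption \eqref{1.25a} enters (through \eqref{3.32}); the exponentially small remainders $Ce^{-t}\|V_{xx}\|^2$ are harmless because, after multiplication by any polynomial weight, they stay integrable against the uniform $L^2$ bounds of Lemma \ref{L3.2}.
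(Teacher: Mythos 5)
Your proposal is correct and follows essentially the same route as the paper: \eqref{3.19}$\times(1+t)$ for \eqref{3.35}, and the two-step scheme \eqref{3.31}$\times(1+t)$ (the paper's \eqref{3.38}) followed by \eqref{3.30}$\times(1+t)^2$ (the paper's \eqref{3.39}) for \eqref{3.36}, with positive-definiteness of the time-weighted energies guaranteed via \eqref{3.32}. Your diagnosis of why one cannot hit \eqref{3.31} directly with $(1+t)^2$, and of the role of assumption \eqref{1.25a}, matches the paper's reasoning.
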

\begin{proof}
 Multiplying \eqref{3.19} by $(1+t)$ and integrating by parts, we have
 \begin{align}\label{3.37}
   &\frac{1}{2}\frac{{\rm d}}{{\rm d}t}\int_{\mathbb{R}}(1+t)\left(V_{t}^{2}-p^\prime(\bar{v})V_{x}^{2}\right){\rm d} x+\frac{3}{4}\int_{\mathbb{R}}(1+t)V_{t}^{2}{\rm d}x\nonumber\\
   \leq& \frac{{\rm d}}{{\rm d}t}(1+t)\int_{\mathbb{R}}\left[\int_{\bar{v}}^{V_{x}+\bar{v}+\hat{v}}p(s){\rm d}s-p(\bar{v})V_{x}-\frac{p^\prime(\bar{v})}{2}V_{x}^{2}\right]{\rm d}x+C(\|V_{x}(t)\|_{1}^{2}+\|V_{t}(t)\|^{2})\nonumber\\
   &+C\delta (1+t)^{-\frac{3}{2}}.
  \end{align}
Integrating the above inequality in $t$ over $[0, t]$ and using \eqref{3.4}, we can immediately obtain \eqref{3.35}.
Then multiplying \eqref{3.31} by $(1+t)$ and integrating it with respect to $t$, we obtain that
\begin{equation}\label{3.38}
 \begin{split}
   (1+t)&(\|V_{x}(t)\|_{1}^{2}+\|V_{xt}(t)\|^{2})+\int_{0}^{t}\left[(1+\tau)(\|V_{xx}(\tau)\|^{2}+\|V_{xt}(\tau)\|^{2})\right]{\rm d}\tau \\&
   \leq  C\left(\|V_{0}\|^{2}_{2}+\|z_{0}\|_{1}^{2}+ \delta \right).
   \end{split}
\end{equation}
Here we have used \eqref{3.4} and \eqref{3.32}. Moreover, multiplying \eqref{3.30} by $(1+t)^{2}$ and integrating it over $[0,t]$ gives
\begin{equation}\label{3.39}
 \begin{split}
   (1+t)^{2}&(\|V_{xx}(t)\|^{2}+\|V_{xt}(t)\|^{2})+\int_{0}^{t}(1+\tau)^{2}\|V_{xt}(\tau)\|^{2}{\rm d}\tau \\&
   \leq  C\left(\|V_{0}\|^{2}_{2}+\|z_{0}\|_{1}^{2}+ \delta \right).
   \end{split}
 \end{equation}
 Here we have used \eqref{3.4}, \eqref{3.32} and \eqref{3.38}.
 Combining two above equations, we can obtain \eqref{3.36}. The proof of Lemma \ref{L3.3} is completed.
\end{proof}
\begin{lemma}\label{L3.4}
 Under the assumptions of Proposition \ref{p1}, if $N(T) \leq \varepsilon^{2}$ and $\delta$ are small enough, it holds that
\begin{equation}\label{3.40}
\begin{split}
   (1+t)^{3}&(\|V_{xxx}(t)\|^{2}+\|V_{xxt}(t)\|^{2})+\int_{0}^{t}\left[(1+\tau)^{2}\|V_{xxx}(\tau)\|^{2}+(1+\tau)^{3}\|V_{xxt}(\tau)\|^{2}\right]{\rm d}\tau \\&
   \leq  C\left(\|V_{0}\|^{2}_{3}+\|z_{0}\|_{2}^{2}+ \delta \right),
   \end{split}
  \end{equation}
  for $0 \leq t \leq T$.
\end{lemma}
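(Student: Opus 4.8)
The plan is to continue the hierarchy of weighted energy estimates that has been carried out up to the second order in Lemmas~\ref{L3.2}--\ref{L3.3}, pushing it one level higher to the third $x$-derivative. Concretely, I would first differentiate $\eqref{1.23}_{1}$ twice in $x$ and test against $-V_{xxxx}$ (equivalently, integrate $\partial_x^2\eqref{1.23}_1$ against $-V_{xxxx}$ after one integration by parts) to produce a ``transport-type'' identity
\begin{equation}\notag
\frac{1}{2}\frac{{\rm d}}{{\rm d}t}\int_{\mathbb{R}}\left(V_{xx}^{2}+2V_{xx}V_{xxt}\right){\rm d}x-\int_{\mathbb{R}}p^\prime(\bar v)V_{xxx}^{2}{\rm d}x=\int_{\mathbb{R}}V_{xxt}^{2}{\rm d}x+(\text{commutator and forcing terms}),
\end{equation}
paralleling \eqref{3.22}. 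Then I would test $\partial_x^2\eqref{1.23}_1$ against $V_{xxt}$ to obtain the ``energy-type'' identity
\begin{equation}\notag
\frac{1}{2}\frac{{\rm d}}{{\rm d}t}\int_{\mathbb{R}}\left(V_{xxt}^{2}-p^\prime(\bar v)V_{xxx}^{2}\right){\rm d}x+\int_{\mathbb{R}}V_{xxt}^{2}{\rm d}x=-\frac{1}{2}\int_{\mathbb{R}}p^{\prime\prime}(\bar v)\bar v_{t}V_{xxx}^{2}{\rm d}x+(\text{forcing terms}),
\end{equation}
paralleling \eqref{3.26}. As in \eqref{3.30}, I expect the worst structural term, $\frac12\frac{{\rm d}}{{\rm d}t}\int_{\mathbb{R}}gf^{\prime}V_{xxx}^{2}{\rm d}x$, to be absorbable into $-\frac12\frac{{\rm d}}{{\rm d}t}\int_{\mathbb{R}}p^{\prime}(\bar v)V_{xxx}^{2}{\rm d}x$ thanks to \eqref{1.25a} and \eqref{3.32}; the remaining top-order contribution $\frac12\frac{{\rm d}}{{\rm d}t}\int_{\mathbb{R}}[p^{\prime}(V_x+\bar v+\hat v)-p^{\prime}(\bar v)]V_{xxx}^{2}{\rm d}x$ is small in coefficient by \eqref{3.2} and can be moved to the left side when forming the Lyapunov functional.

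The second step is to combine $\lambda\cdot(\text{transport identity})$ with the energy identity, $0<\lambda\ll1$, to get a single inequality of the form
\begin{equation}\notag
\frac{1}{2}\frac{{\rm d}}{{\rm d}t}\int_{\mathbb{R}}\left[V_{xxt}^{2}+\lambda V_{xx}^{2}+2\lambda V_{xx}V_{xxt}+(gf^{\prime}-p^{\prime}(\bar v))V_{xxx}^{2}\right]{\rm d}x+\frac12\int_{\mathbb{R}}\left(V_{xxt}^{2}-\lambda p^{\prime}(\bar v)V_{xxx}^{2}\right){\rm d}x\leq(\text{good RHS}),
\end{equation}
the analogue of \eqref{3.31}. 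Here the ``good RHS'' should consist of: a total derivative of a small quantity (the $[p^{\prime}(V_x+\bar v+\hat v)-p^{\prime}(\bar v)]V_{xxx}^{2}$ term); terms of the form $C{\rm e}^{-t}\|V_{xxx}(t)\|^{2}$ coming from the $\hat u$-contributions in $F_2$ (handled later by Gronwall, exactly as in Lemma~\ref{L3.2}); terms $C\delta(1+t)^{-1}(\|V_{xx}(t)\|^2+\|V_{xxx}(t)\|^2)$ and $C\delta(1+t)^{-1}\|V_{xt}(t)\|^2$ from the $\bar v$-derivatives and from $p^{\prime\prime}(\bar v)\bar v_t$; a pure source $C\delta(1+t)^{-9/2}$; and, crucially, lower-order terms $C\|V_{xxt}(t)\|^{2}$ with a coefficient that can be made small (or absorbed by the coercive $\frac12\int V_{xxt}^2$ after choosing constants) together with already-controlled quantities $\|V_{xt}(t)\|^2$, $\|V_{xxt}(t)\|^2$ appearing through the commutator $[\,\cdot\,,\partial_x^2]$ acting on $(p^{\prime}(\bar v)V_x)_x$. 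For the forcing $F_{1xx},F_{1xxx},F_{2xx}$ I would use Lemma~\ref{L3.1}'s bounds for $(g^{\prime}f)_{xx},(gf^{\prime})_{xx}$ in terms of $|V_{xxt}|,(1+t)^{-1/2}|V_{xxx}|$ plus fast-decaying remainders; after integration by parts the genuinely top-order pieces reorganize into the total-derivative and the $gf^{\prime}V_{xxx}^2$ structures noted above, while the cross terms are handled by Cauchy--Schwarz with weights, producing exactly the ``good RHS'' list.

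The third step is the weighted time-integration bootstrap, mirroring Lemma~\ref{L3.3}. First integrate the combined inequality directly in $t$ to get the unweighted bound $\|V_{xx}(t)\|_1^2+\|V_{xxt}(t)\|^2+\int_0^t(\|V_{xxx}\|^2+\|V_{xxt}\|^2)\,{\rm d}\tau\le C(\|V_0\|_3^2+\|z_0\|_2^2+\delta)$, using the already-established Lemmas~\ref{L3.2}--\ref{L3.3} to control the $\|V_{xt}\|^2$, $\|V_{xx}\|^2$ integrals on the right, and Gronwall to kill the $C{\rm e}^{-t}\|V_{xxx}\|^2$ term. Then multiply the combined inequality by $(1+t)^2$ and integrate: the time-derivative of the weight contributes $C(1+t)\big(\|V_{xxx}(t)\|^2+\|V_{xxt}(t)\|^2+\|V_{xx}(t)\|^2\big)$, whose time integral is finite by the unweighted bound and by Lemma~\ref{L3.3} (note $\int_0^t(1+\tau)\|V_{xx}\|^2\,{\rm d}\tau$ and $\int_0^t(1+\tau)^2\|V_{xt}\|^2\,{\rm d}\tau$ are already controlled), yielding $(1+t)^2(\|V_{xxx}(t)\|^2+\|V_{xxt}(t)\|^2)+\int_0^t(1+\tau)^2\|V_{xxt}\|^2\,{\rm d}\tau\le C(\cdots)$. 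Finally multiply by $(1+t)^3$ and integrate once more, using the $(1+t)^2$-bound just obtained to absorb the weight-derivative terms and the $C\delta(1+t)^{-1}(1+t)^3\|V_{xxx}\|^2$-type terms (the latter small in $\delta$, so absorbed on the left), which produces \eqref{3.40}. The main obstacle, as flagged in the paper's introduction and visible already in \eqref{3.29}--\eqref{3.30}, is the top-order term $\frac12\frac{{\rm d}}{{\rm d}t}\int gf^{\prime}V_{xxx}^2\,{\rm d}x$ arising from $F_{2x}$-type contributions at one higher derivative: it cannot be absorbed by any dissipative term and must instead be incorporated into the Lyapunov functional via the sign condition \eqref{1.25a}, so one must check carefully that after all integrations by parts the coefficient of $V_{xxx}^2$ under the time derivative is exactly $gf^{\prime}-p^{\prime}(\bar v)\ge c_0>0$ and that the leftover $\int(gf^{\prime})_t V_{xxx}^2\,{\rm d}x$ decays like $(1+t)^{-1}\|V_{xxx}\|^2$ (by the $(gf^{\prime})_t$ bound in Lemma~\ref{L3.1}, modulo a fast-decaying ${\rm e}^{-t}\|V_{xxx}\|^2$ piece), which is precisely the form needed for the weighted integration to close.
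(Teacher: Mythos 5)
Your plan mirrors the paper's proof of Lemma~\ref{L3.4} step for step: the ``transport'' identity is the paper's \eqref{3.41}, the ``energy'' identity is \eqref{3.48}, the small-$\lambda$ combination is \eqref{3.56}, the absorption of $\frac12\frac{{\rm d}}{{\rm d}t}\int gf^\prime V_{xxx}^2\,{\rm d}x$ into the Lyapunov functional via \eqref{1.25a} and \eqref{3.32} is exactly what the paper does in \eqref{3.53} and \eqref{3.55}, and the weighted bootstrap plus Gronwall on the ${\rm e}^{-t}\|V_{xxx}\|^2$ remainder matches \eqref{3.57}--\eqref{3.60}.

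Two small corrections. First, your prescription for obtaining the transport identity has a derivative miscount: testing $\partial_x^2\eqref{1.23}_1$ against $-V_{xxxx}$ would land one order too high (it produces a $\frac{{\rm d}}{{\rm d}t}\int V_{xxxt}V_{xxx}$ structure, not $\frac{{\rm d}}{{\rm d}t}\int V_{xxt}V_{xx}$). The displayed identity you wrote is correct and comes from testing $\partial_x\eqref{1.23}_1$ against $-V_{xxx}$, which is what the paper does. Second, in the weighted time-integration you jump directly from the unweighted bound to the $(1+t)^2$-weight, asserting that the weight-derivative contribution $\int_0^t(1+\tau)(\|V_{xxx}\|^2+\|V_{xxt}\|^2+\|V_{xx}\|^2)\,{\rm d}\tau$ is already controlled. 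The $\|V_{xx}\|^2$ piece is controlled by Lemma~\ref{L3.3}, but $\int_0^t(1+\tau)(\|V_{xxx}\|^2+\|V_{xxt}\|^2)\,{\rm d}\tau$ is \emph{not} available from the unweighted bound alone, which only gives the unweighted dissipation integrals. You need the intermediate $(1+t)^1$-weighted integration of the combined inequality first (as the paper does when passing from \eqref{3.58} to \eqref{3.59} via ``$(1+t)\cdot\eqref{3.56}$ and $(1+t)^2\cdot\eqref{3.56}$''), since each multiplication of the weight by $(1+t)$ uses the dissipation integral produced at the previous step. With that one intermediate step inserted, the bootstrap closes exactly as you describe.
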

\begin{proof}
In a fashion similar to that above, multiplying $\eqref{1.23}_{1x}$ by $-V_{xxx}$ and integrating it with respect to $x$ over $\mathbb{R}$, we have after some integrations by parts that
\begin{equation}\label{3.41}
\begin{split}
&\frac{1}{2}\frac{{\rm d}}{{\rm d}t}\int_{\mathbb{R}} \left(V_{xx}^{2}+2V_{xx}V_{xxt}\right){\rm d} x-\int_{\mathbb{R}} p^\prime(\bar{v})V_{xxx}^{2} {\rm d} x \\
&=\int_{\mathbb{R}}V_{xxt}^{2}{\rm d}x+\int_{\mathbb{R}}[2p^\prime(\bar{v})_{x}V_{xx}+p^\prime(\bar{v})_{xx}V_{x}-F_{1x}]V_{xxx}{\rm d}x-\int_{\mathbb{R}}F_{2x}V_{xxx}{\rm d}x.
\end{split}
\end{equation}
The right-hand side of \eqref{3.41} can be estimated as follows. From \eqref{2.6}-\eqref{3.2}, one has that
  \begin{equation}\label{3.42}
  \begin{split}
   &\int_{\mathbb{R}}[2p^\prime(\bar{v})_{x}V_{xx}+p^\prime(\bar{v})_{xx}V_{x}-F_{1x}]V_{xxx}{\rm d}x \\
   \leq& -\frac{p^\prime(\bar{v})}{16}\|V_{xxx}(t)\|^{2}+C\delta (1+t)^{-\frac{7}{2}}+C\delta(1+t)^{-2}\|V_{x}(t)\|^{2}+C(1+t)^{-1}\|V_{xx}(t)\|^{2}.
   \end{split}
  \end{equation}
Noticing that
\begin{align}\label{3.43}
 -\int_{\mathbb{R}}F_{2x}V_{xxx}{\rm d}x=&-\int_{\mathbb{R}}\left(g^\prime fV_{xt}\right)_{x}V_{xxx}{\rm d}x-\int_{\mathbb{R}}\left(gf^\prime V_{xx}\right)_{x}V_{xxx}{\rm d}x\nonumber\\
    &-\int_{\mathbb{R}}[g^\prime f(-p(\bar{v})_{xx}+\hat{v}_{t})]_{x}V_{xxx}{\rm d}x-\int_{\mathbb{R}}[gf^\prime(\bar{v}_{x}+\hat{v}_{x})]_{x}V_{xxx}{\rm d}x\nonumber\\
    &:= I_{6}+I_{7}+I_{8}+I_{9},
\end{align}
then from \eqref{2.6}-\eqref{3.3}, we can conclude that
\begin{equation}\label{3.44}
    \begin{split}
   I_{6}=&-\int_{\mathbb{R}}\left(g^\prime f\right)_{x}V_{xt}V_{xxx}{\rm d}x-\int_{\mathbb{R}}g^\prime fV_{xxt}V_{xxx}{\rm d}x\\
   &\leq -\frac{p^\prime(\bar{v})}{16}\|V_{xxx}(t)\|^{2}+C\|V_{xxt}(t)\|^{2}+C(1+t)^{-2}\|V_{xt}(t)\|^{2},
    \end{split}
   \end{equation}
   \begin{equation}\label{3.45}
    \begin{split}
    I_{7}=& \int_{\mathbb{R}}\left(gf^\prime\right)_{x}V_{xx}V_{xxx}{\rm d}x+\int_{\mathbb{R}}gf^\prime V_{xxx}^{2}{\rm d}x\\
    \leq& -\frac{p^\prime(\bar{v})}{16}\|V_{xxx}(t)\|^{2}+C(1+t)^{-3}\|V_{xx}(t)\|^{2}+C{\rm e}^{-t}\|V_{xxx}(t)\|^{2},
    \end{split}
   \end{equation}
   and
   \begin{equation}\label{3.46}
    \begin{split}
    I_{8}+I_{9} &\leq C(1+t)^{-\frac{1}{2}}\int_{\mathbb{R}}(|\bar{v}_{xxx}|+|\bar{v}_{x}||\bar{v}_{xx}|+|\bar{v}_{x}|^{3}+|\hat{v}_{xt}|)|V_{xxx}|{\rm d}x\\
    &~~~~+C(1+t)^{-\frac{3}{2}}\int_{\mathbb{R}}(|\bar{v}_{x}|+|\hat{v}_{x}|)|V_{xxx}|{\rm d}x\\
    &~~~~+C(1+t)^{-1}\int_{\mathbb{R}}(|\bar{v}_{xx}|+|\bar{v}_{x}|^{2}+|\hat{v}_{t}|+|\hat{v}_{xx}|)|V_{xxx}|{\rm d}x\\
    &\leq -\frac{p^\prime(\bar{v})}{16}\|V_{xxx}(t)\|^{2}+C\delta (1+t)^{-\frac{7}{2}}.
    \end{split}
   \end{equation}
  Substituting \eqref{3.42} and \eqref{3.44}-\eqref{3.46} into \eqref{3.41}, we have
  \begin{equation}\label{3.47}
  \begin{split}
     &\frac{1}{2}\frac{{\rm d}}{{\rm d}t}\int_{\mathbb{R}}(V_{xx}^{2}+V_{xx}V_{xxt}){\rm d}x-\frac{3}{4}\int_{\mathbb{R}}p^\prime(\bar{v})V_{xxx}^{2}{\rm d}x\\
     \leq&  C\|V_{xxt}(t)\|^{2}+C{\rm e}^{-t}\|V_{xxx}(t)\|^{2}+C(1+t)^{-2}(\|V_{x}(t)\|^{2}+\|V_{xt}(t)\|^{2})\\&
     +C(1+t)^{-1}\|V_{xx}(t)\|^{2}+C\delta (1+t)^{-\frac{7}{2}}.
     \end{split}
  \end{equation}
Then the calculations of $\int_{\mathbb{R}}\partial_{x}^{2}\eqref{1.23}_{1}\times V_{xxt}{\rm d}x$ gives
\begin{equation}\label{3.48}
 \begin{split}
   &\frac{1}{2}\frac{{\rm d}}{{\rm d}t}\int_{\mathbb{R}}\left(V_{xxt}^{2}-p^\prime(\bar{v})V_{xxx}^{2}\right){\rm d}x+\int_{\mathbb{R}}V_{xxt}^{2}{\rm d}x\\
   =&-\frac{1}{2}\int_{\mathbb{R}}p^{\prime\prime}(\bar{v})\bar{v}_{t}V_{xxx}^{2}{\rm d}x+\int_{\mathbb{R}}\left[F_{1x}-p^\prime(\bar{v})_{xx}V_{x}-2p^\prime(\bar{v})_{x}V_{xx}\right]_{x}V_{xxt}{\rm d}x+\int_{\mathbb{R}}F_{2xx}V_{xxt}{\rm d}x.
    \end{split}
\end{equation}
By using \eqref{2.6}, we first have
\begin{equation}\label{3.49}
 -\frac{1}{2}\int_{\mathbb{R}}p^{\prime\prime}(\bar{v})\bar{v}_{t}V_{xxx}^{2}{\rm d}x\leq C\delta(1+t)^{-1}\|V_{xxx}(t)\|^{2}.	
\end{equation}
 As shown in \cite{Nishihara1996}, a directly calculation shows that
\begin{equation}\label{3.50}
 \begin{split}
   &\int_{\mathbb{R}}\left[F_{1x}-p^\prime(\bar{v})_{xx}V_{x}-2p^\prime(\bar{v})_{x}V_{xx}\right]_{x}V_{xxt}{\rm d}x\\
    \leq& \frac{1}{16} \|V_{xxt}(t)\|^{2}+\frac{1}{2}\frac{{\rm d}}{{\rm d}t}\int_{\mathbb{R}}\left[p^\prime(V_{x}+\bar{v}+\hat{v})-p^\prime(\bar{v})\right]V_{xxx}^{2}{\rm d}x+C\delta (1+t)^{-\frac{9}{2}}+C(1+t)^{-3}\|V_{x}(t)\|^{2}\\
    &+C(\varepsilon +\delta)(1+t)^{-1}\|V_{xxx}(t)\|^{2}+C(1+t)^{-2}\|V_{xx}(t)\|^{2}.
    \end{split}
\end{equation}
 Now we turn to estimate the last term in the right hand of \eqref{3.48}. Notice that
 \begin{align}\label{3.51}
\int_{\mathbb{R}}F_{2xx}V_{xxt}{\rm d}x=&\int_{\mathbb{R}}(g^\prime fV_{xt})_{xx}V_{xxt}{\rm d}x+\int_{\mathbb{R}}(gf^\prime V_{xx})_{xx}V_{xxt}{\rm d}x\nonumber\\
&+\int_{\mathbb{R}}(g^\prime f(-p(\bar{v})_{xx}+\hat{v}_{t}))_{xx}V_{xxt}{\rm d}x\nonumber\\
&+\int_{\mathbb{R}}(gf^\prime(\bar{v}_{x}+\hat{v}_{x}))_{xx}V_{xxt}{\rm d}x:=I_{10}+I_{11}+I_{12}+I_{13}.
\end{align}
 From \eqref{2.6}-\eqref{2.8} and {\it a priori} assumption \eqref{3.1}-\eqref{3.3}, we can deduce that
  \begin{equation}\label{3.52}
  	\begin{split}
  	I_{10}&=\int_{\mathbb{R}}\left(g^\prime f\right)_{xx}V_{xt}V_{xxt}{\rm d}x+2\int_{\mathbb{R}}\left(g^\prime f\right)_{x}V_{xxt}^{2}{\rm d}x+\int_{\mathbb{R}}g^\prime fV_{xxt}V_{xxxt}{\rm d}x\\
  	&\leq \frac{1}{32}\int_{\mathbb{R}}V_{xxt}^{2}{\rm d}x+C(1+t)^{-3}\int_{\mathbb{R}}V_{xt}^{2}{\rm d}x+C\varepsilon (1+t)^{-\frac{9}{4}}\int_{\mathbb{R}}|V_{xxt}||V_{xxx}|{\rm d}x-\frac{1}{2}\int_{\mathbb{R}}\left(g^\prime f\right)_{x}V_{xxt}^{2}{\rm d}x\\
  	&\leq \frac{1}{16} \|V_{xxt}(t)\|^{2}+C(1+t)^{-3}\|V_{xt}(t)\|^{2}+C\varepsilon	(1+t)^{-\frac{9}{2}}\|V_{xxx}(t)\|^{2},
  	\end{split}
  \end{equation}
  and
  \begin{equation}\label{3.53}
  \begin{split}
  	I_{11}&=\int_{\mathbb{R}}\left(gf^\prime\right)_{xx}V_{xx}V_{xxt}{\rm d}x+2\int_{\mathbb{R}}\left(gf^\prime\right)_{x}V_{xxx}V_{xxt}{\rm d}x-\int_{\mathbb{R}}gf^\prime V_{xxxx}V_{xxt}{\rm d}x\\
  	&\leq \frac{1}{32}\int_{\mathbb{R}}V_{xxt}^{2}{\rm d}x+C(1+t)^{-4}\int_{\mathbb{R}}V_{xx}^{2}{\rm d}x+C\varepsilon(1+t)^{-1}\int_{\mathbb{R}}|V_{xxt}||V_{xxx}|{\rm d}x\\
  	&~~~~+C(\varepsilon +\delta)(1+t)^{-3}\int_{\mathbb{R}}V_{xxx}^{2}{\rm d}x-\frac{1}{2}\frac{{\rm d}}{{\rm d}t}\int_{\mathbb{R}}gf^\prime V_{xxx}^{2}{\rm d}x+\frac{1}{2}\int_{\mathbb{R}}\left(gf^\prime\right)_{t}V_{xxx}^{2}{\rm d}x\\
  	&\leq \frac{1}{16} \|V_{xxt}(t)\|^{2}+C(1+t)^{-4}\|V_{xx}(t)\|^{2}+C(\varepsilon +\delta)(1+t)^{-2}\|V_{xxx}(t)\|^{2}\\
  	&~~~~+C{\rm e}^{-t}\|V_{xxx}(t)\|^{2}-\frac{1}{2}\frac{{\rm d}}{{\rm d}t}\int_{\mathbb{R}}gf^\prime V_{xxx}^{2}{\rm d}x.
  	\end{split}	
  \end{equation}
  Furthermore, we can similarly prove
  \begin{equation}\label{3.54}
  	\begin{split}
  	I_{12}+I_{13} \leq \frac{1}{16} \|V_{xxt}(t)\|^{2}+C\delta(1+t)^{-3}\|V_{xxx}(t)\|^{2}+C\delta(1+t)^{-\frac{9}{2}}.	
  	\end{split}
  \end{equation}
Substituting \eqref{3.49}-\eqref{3.50} and \eqref{3.52}-\eqref{3.54} into \eqref{3.48}, we have
\begin{equation}\label{3.55}
\begin{split}
    &\frac{1}{2}\frac{{\rm d}}{{\rm d}t}\int_{\mathbb{R}}\left[V_{xxt}^{2}+\left(gf^\prime-p^\prime(\bar{v})\right)V_{xxx}^{2}\right]{\rm d}x+\frac{3}{4}\int_{\mathbb{R}}V_{xxt}^{2}{\rm d}x \\
    \leq& \frac{1}{2}\frac{{\rm d}}{{\rm d}t}\int_{\mathbb{R}}\left[p^\prime(V_{x}+\bar{v}+\hat{v})-p^\prime(\bar{v})\right]V_{xxx}^{2}{\rm d}x+C(\varepsilon +\delta)(1+t)^{-1}\|V_{xxx}(t)\|^{2}+C\delta (1+t)^{-\frac{9}{2}}\\&
    +C(1+t)^{-2}\|V_{xx}(t)\|^{2}+C(1+t)^{-3}(\|V_{x}(t)\|^{2}+\|V_{xt}(t)\|^{2})+C{\rm e}^{-t}\|V_{xxx}(t)\|^{2}.
    \end{split}
\end{equation}
  Addition of $\lambda \cdot \eqref{3.47}$ to \eqref{3.55} $(0<\lambda \ll 1)$, one has that
  \begin{equation}\label{3.56}
   \begin{split}
   &\frac{1}{2}\frac{{\rm d}}{{\rm d}t}\int_{\mathbb{R}} \left[V_{xxt}^{2}+\lambda V_{xx}^{2}+2\lambda V_{xxt}V_{xx}+\left(gf^\prime-p^\prime(\bar{v})\right)V_{xxx}^{2}\right]{\rm d}x+\frac{1}{2}\int_{\mathbb{R}} \left(V_{xxt}^{2}-\lambda p^\prime(\bar{v})V_{xxx}^{2}\right){\rm d} x \\
   \leq& \frac{1}{2}\frac{{\rm d}}{{\rm d}t}\int_{\mathbb{R}}\left[p^\prime(V_{x}+\bar{v}+\hat{v})-p^\prime(\bar{v})\right]V_{xxx}^{2}{\rm d}x+ C\delta (1+t)^{-\frac{7}{2}}+C(1+t)^{-1}\|V_{xx}(t)\|^{2}\\&
   +C(1+t)^{-2}(\|V_{x}(t)\|^{2}+\|V_{xt}(t)\|^{2})+C{\rm e}^{-t}\|V_{xxx}(t)\|^{2} .
    \end{split}
    \end{equation}
  Integrating \eqref{3.56} over $[0,t]$ and using Lemma \ref{L3.2}, one gets
\begin{equation}\label{3.57}
  \begin{split}
   \|V_{xx}(t)\|_{1}^{2}+&\|V_{xxt}(t)\|^{2}+\int_{0}^{t}(\|V_{xxx}(\tau)\|^{2}+\|V_{xxt}(\tau)\|^{2}){\rm d}\tau \\&
   \leq  C\left(\|V_{0}\|^{2}_{3}+\|z_{0}\|_{2}^{2}+ \delta \right)+C\int_{0}^{t}{\rm e}^{-\tau}\|V_{xxx}(\tau)\|^{2}{\rm d}\tau .
    \end{split}
  \end{equation}
  It follows from Gronwall's inequality that
  \begin{equation}\label{3.58}
   \begin{split}
   \|V_{xx}(t)\|_{1}^{2}+&\|V_{xxt}(t)\|^{2}+\int_{0}^{t}(\|V_{xxx}(\tau)\|^{2}+\|V_{xxt}(\tau)\|^{2}){\rm d}\tau \\&
   \leq  C\left(\|V_{0}\|^{2}_{3}+\|z_{0}\|_{2}^{2}+ \delta \right) .
    \end{split}
  \end{equation}
Integrating $(1+t)\cdot \eqref{3.56}$ and $(1+t)^{2}\cdot \eqref{3.56}$ over $[0,t]$, we have
\begin{equation}\label{3.59}
   \begin{split}
   (1+t)^{2}(\|V_{xx}(t)\|_{1}^{2}+&\|V_{xxt}(t)\|^{2})+\int_{0}^{t}(1+\tau)^{2}(\|V_{xxx}(\tau)\|^{2}+\|V_{xxt}(\tau)\|^{2}){\rm d}\tau \\&
   \leq  C\left(\|V_{0}\|^{2}_{3}+\|z_{0}\|_{2}^{2}+ \delta \right) .
    \end{split}
  \end{equation}
  Here we have used Lemma \ref{L3.2}-Lemma \ref{L3.3}. Then the integration of $(1+t)^{3} \cdot \eqref{3.55}$ over $[0,t]$ yields
  \begin{equation}\label{3.60}
   (1+t)^{3}(\|V_{xxx}(t)\|^{2}+\|V_{xxt}(t)\|^{2})+\int_{0}^{t}(1+\tau)^{3}\|V_{xxt}(\tau)\|^{2}{\rm d}\tau
   \leq  C\left(\|V_{0}\|^{2}_{3}+\|z_{0}\|_{2}^{2}+ \delta \right) .
  \end{equation}
  Combining \eqref{3.59} and \eqref{3.60}, one can immediately obtain \eqref{3.40}. Thus the proof of Lemma \ref{L3.4} is completed.
\end{proof}
\begin{lemma}\label{L3.5}
 Under the assumptions of Proposition \ref{p1}, if $N(T) \leq \varepsilon^{2}$ and $\delta$ are small enough, it holds that
 \begin{equation}\label{3.61}
 \begin{split}
   (1+t)^{2}\|V_{t}(t)\|^{2}&+(1+t)^{3}(\|V_{tt}(t)\|^{2}+\|V_{xt}(t)\|^{2})\\
   &+\int_{0}^{t}\left[(1+\tau)^{2}\|V_{xt}(\tau)\|^{2}+(1+\tau)^{3}\|V_{tt}(\tau)\|^{2}\right]{\rm d}\tau \\
   \leq&  C\left(\|V_{0}\|^{2}_{2}+\|z_{0}\|_{1}^{2}+ \delta \right),
   \end{split}
  \end{equation}
  for $0 \leq t \leq T$.
\end{lemma}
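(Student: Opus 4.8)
The plan is to mirror the structure of Lemmas~\ref{L3.3}--\ref{L3.4}, now applied to the equation obtained by differentiating $\eqref{1.23}_{1}$ once in $t$,
\begin{equation}\notag
V_{ttt}+\left(p^{\prime}(\bar{v})V_{x}\right)_{xt}+\alpha V_{tt}=F_{1t}+F_{2t}.
\end{equation}
First I would multiply this by $V_{tt}$ and integrate over $\mathbb{R}$. Integrating by parts in $x$ and controlling the factors $p^{\prime\prime}(\bar{v})\bar{v}_{t}$, $p^{\prime\prime}(\bar{v})\bar{v}_{x}$, $\bar{v}_{xt}$ by Lemma~\ref{L2.1}, the diffusive structure produces the main terms $\tfrac12\tfrac{{\rm d}}{{\rm d}t}\int_{\mathbb{R}}(V_{tt}^{2}-p^{\prime}(\bar{v})V_{xt}^{2})\,{\rm d}x+\int_{\mathbb{R}}V_{tt}^{2}\,{\rm d}x$ together with lower order terms which, after distributing the time-decay weight onto the higher-order factor in each application of Young's inequality, are bounded by $\eta\|V_{tt}(t)\|^{2}+C(1+t)^{-1}\|V_{xt}(t)\|^{2}+C(1+t)^{-2}\|V_{xx}(t)\|^{2}+C(1+t)^{-3}\|V_{x}(t)\|^{2}$. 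The forcing $\int_{\mathbb{R}}F_{2t}V_{tt}\,{\rm d}x$ is handled exactly as the $F_{2}$-terms in \eqref{3.29}: expanding $F_{2t}$ from \eqref{3.7} and integrating by parts once, the leading contributions are $-\tfrac12\int_{\mathbb{R}}(g^{\prime}f)_{x}V_{tt}^{2}\,{\rm d}x$ (absorbable since $(g^{\prime}f)_{x}$ is small by Lemma~\ref{L3.1}), the bad term $-\tfrac12\tfrac{{\rm d}}{{\rm d}t}\int_{\mathbb{R}}gf^{\prime}V_{xt}^{2}\,{\rm d}x$, and remainders of the above type estimated through Lemma~\ref{L3.1}; likewise $\int_{\mathbb{R}}F_{1t}V_{tt}\,{\rm d}x$ yields, as in \eqref{3.28}/\eqref{3.50}, a term $\tfrac12\tfrac{{\rm d}}{{\rm d}t}\int_{\mathbb{R}}[p^{\prime}(V_{x}+\bar{v}+\hat{v})-p^{\prime}(\bar{v})]V_{xt}^{2}\,{\rm d}x$ plus similar remainders. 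Moving the bad $\tfrac{{\rm d}}{{\rm d}t}$-term to the left and invoking \eqref{3.32} one arrives at
\begin{equation}\notag
\tfrac12\tfrac{{\rm d}}{{\rm d}t}\int_{\mathbb{R}}\big[V_{tt}^{2}+(gf^{\prime}-p^{\prime}(\bar{v}))V_{xt}^{2}\big]\,{\rm d}x+\tfrac34\int_{\mathbb{R}}V_{tt}^{2}\,{\rm d}x\leq\tfrac12\tfrac{{\rm d}}{{\rm d}t}\int_{\mathbb{R}}\big[p^{\prime}(V_{x}+\bar{v}+\hat{v})-p^{\prime}(\bar{v})\big]V_{xt}^{2}\,{\rm d}x+\mathcal{R}(t),
\end{equation}
with $\mathcal{R}(t)\leq C(1+t)^{-1}\|V_{xt}(t)\|^{2}+C(1+t)^{-2}\|V_{xx}(t)\|^{2}+C(1+t)^{-3}(\|V_{x}(t)\|^{2}+\|V_{xt}(t)\|^{2})+C{\rm e}^{-t}\|V_{xt}(t)\|^{2}+C\delta(1+t)^{-\frac{9}{2}}$.

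Second I would multiply the same differentiated equation by $V_{t}$; integration by parts (using $-p^{\prime}(\bar{v})>0$) gives $\tfrac{{\rm d}}{{\rm d}t}\int_{\mathbb{R}}(V_{tt}V_{t}+\tfrac{\alpha}{2}V_{t}^{2})\,{\rm d}x+\int_{\mathbb{R}}(-p^{\prime}(\bar{v}))V_{xt}^{2}\,{\rm d}x=\int_{\mathbb{R}}V_{tt}^{2}\,{\rm d}x+(\text{small})+\int_{\mathbb{R}}(F_{1t}+F_{2t})V_{t}\,{\rm d}x$, whose right-hand side is estimated by the same mechanism. Adding $\la$ times this to the first inequality with $0<\la\ll1$ produces a differential inequality whose energy $V_{tt}^{2}+\la V_{t}^{2}+2\la V_{tt}V_{t}+(gf^{\prime}-p^{\prime}(\bar{v}))V_{xt}^{2}$ is, by \eqref{3.32} and $\la\ll1$, positive definite in $(V_{t},V_{tt},V_{xt})$, with dissipation controlling $\|V_{tt}(t)\|^{2}+\|V_{xt}(t)\|^{2}$; its right-hand side consists of the small $\tfrac{{\rm d}}{{\rm d}t}$-term above, whose coefficient $p^{\prime}(V_{x}+\bar{v}+\hat{v})-p^{\prime}(\bar{v})$ is $O(\va+\delta)$ in $L^{\infty}$ by \eqref{3.2} and Lemma~\ref{L2.2}, together with $\mathcal{R}(t)$.

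Third I would close the time-weighted hierarchy. From $\eqref{1.23}_{1}$ one has $V_{tt}=-p^{\prime}(\bar{v})V_{xx}-p^{\prime\prime}(\bar{v})\bar{v}_{x}V_{x}-\alpha V_{t}+F_{1}+F_{2}$, so Lemmas~\ref{L3.2}--\ref{L3.4} and Lemmas~\ref{L2.1}--\ref{L2.2} give $\int_{0}^{t}(1+\tau)\|V_{tt}(\tau)\|^{2}\,{\rm d}\tau\leq C(\|V_{0}\|_{2}^{2}+\|z_{0}\|_{1}^{2}+\delta)$. Multiplying the combined inequality by $(1+t)^{2}$ and integrating on $[0,t]$, the weight-loss term (i.e.\ $2(1+\tau)$ times the energy) and the weighted remainders are absorbed using \eqref{3.4}, \eqref{3.35}--\eqref{3.36} (in particular $\int_{0}^{t}(1+\tau)\|V_{t}\|^{2}\leq C$, $\int_{0}^{t}(1+\tau)\|V_{xx}\|^{2}\leq C$, $\int_{0}^{t}(1+\tau)^{2}\|V_{xt}\|^{2}\leq C$), while the boundary term from the $\tfrac{{\rm d}}{{\rm d}t}$-term on the right is absorbed into the left by the smallness of its coefficient; this yields $(1+t)^{2}(\|V_{t}(t)\|^{2}+\|V_{tt}(t)\|^{2}+\|V_{xt}(t)\|^{2})+\int_{0}^{t}(1+\tau)^{2}(\|V_{tt}(\tau)\|^{2}+\|V_{xt}(\tau)\|^{2})\,{\rm d}\tau\leq C(\|V_{0}\|_{2}^{2}+\|z_{0}\|_{1}^{2}+\delta)$. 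Finally, multiplying the first inequality by $(1+t)^{3}$ and integrating, the newly created weight-loss terms $3(1+\tau)^{2}(\|V_{tt}(\tau)\|^{2}+\|V_{xt}(\tau)\|^{2})$ are absorbed by the bound just obtained, the remainders $(1+\tau)^{3}\mathcal{R}(\tau)$ are integrable by \eqref{3.4} and \eqref{3.36}, and the $(1+t)^{3}$-boundary term of coefficient $O(\va+\delta)$ is absorbed by $c_{0}(1+t)^{3}\|V_{xt}(t)\|^{2}$; this produces the $(1+t)^{3}(\|V_{tt}(t)\|^{2}+\|V_{xt}(t)\|^{2})$ bounds and $\int_{0}^{t}(1+\tau)^{3}\|V_{tt}(\tau)\|^{2}\,{\rm d}\tau\leq C$, which together with the preceding $(1+t)^{2}$-estimate give \eqref{3.61}.

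I expect the main obstacle to be the bookkeeping of the bad quadratic terms generated by $F_{2t}$: the top-order piece $-\tfrac12\tfrac{{\rm d}}{{\rm d}t}\int_{\mathbb{R}}gf^{\prime}V_{xt}^{2}\,{\rm d}x$ can be absorbed only through the structural sign condition \eqref{1.25a} (via \eqref{3.32}), and one must check that the surviving $\tfrac{{\rm d}}{{\rm d}t}$-term keeps its coefficient $O(\va+\delta)$ in $L^{\infty}$ so that its boundary contribution stays subordinate after multiplication by $(1+t)^{2}$ and $(1+t)^{3}$; in parallel, each use of Young's inequality must be arranged so that the surviving lower-order norms $\|V_{x}\|$, $\|V_{xx}\|$, $\|V_{xt}\|$ are integrable against the required power of $(1+t)$, which forces the full strength of Lemmas~\ref{L3.2}--\ref{L3.4}.
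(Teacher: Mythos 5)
Your proposal matches the paper's own proof: both derive the two energy identities by testing $\partial_t\eqref{1.23}_1$ against $V_{tt}$ and $V_{t}$, use the structural sign condition \eqref{1.25a} via \eqref{3.32} to move $-\tfrac12\tfrac{{\rm d}}{{\rm d}t}\int_{\mathbb{R}}gf^{\prime}V_{xt}^{2}\,{\rm d}x$ into a positive-definite energy after adding a small multiple $\lambda$ of the $V_t$-identity, and then close with time-weighted integration using Lemmas~\ref{L3.2}--\ref{L3.4}. The only (harmless) deviation is that you establish $\int_{0}^{t}(1+\tau)\|V_{tt}(\tau)\|^{2}\,{\rm d}\tau\leq C$ directly from the equation $V_{tt}=-\left(p^{\prime}(\bar{v})V_{x}\right)_{x}-V_{t}+F_{1}+F_{2}$ before applying the $(1+t)^{2}$ weight, whereas the paper reaches the same intermediate bound by cascading the weighted integration of \eqref{3.77} through the powers $(1+t)^{0}$, $(1+t)^{1}$, $(1+t)^{2}$; both routes are equivalent.
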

\begin{proof}
Firstly, having $\int_{\mathbb{R}}V_{t}\times \eqref{1.23}_{1t}{\rm d} x$, we obtain
\begin{equation}\label{3.62}
\frac{1}{2}\frac{{\rm d}}{{\rm d}t}\int_{\mathbb{R}} \left(V_{t}^{2}+2V_{t}V_{tt}\right){\rm d} x-\int_{\mathbb{R}} p^\prime(\bar{v})V_{xt}^{2}{\rm d} x =\int_{\mathbb{R}}V_{tt}^{2}{\rm d}x+\int_{\mathbb{R}}(F_{1t}V_{t}+p^\prime(\bar{v})_{t}V_{x}V_{xt}){\rm d}x+\int_{\mathbb{R}}F_{2t}V_{t}{\rm d}x.
\end{equation}
By direct calculation, we first have
  \begin{equation}\label{3.63}
  \begin{split}
   &\int_{\mathbb{R}}(F_{1t}V_{t}+p^\prime(\bar{v})_{t}V_{x}V_{xt}){\rm d}x\\&
   \leq -\frac{p^\prime(\bar{v})}{16}\|V_{xt}(t)\|^{2}+C\delta (1+t)^{-\frac{7}{2}}+C\delta(1+t)^{-2}\|V_{x}(t)\|^{2},
   \end{split}
  \end{equation}
  then, notice that
  \begin{equation}\label{3.64}
  \begin{split}
  \int_{\mathbb{R}}F_{2t}V_{t}{\rm d}x&=\int_{\mathbb{R}}(g^\prime fV_{xt})_{t}V_{t}{\rm d}x+\int_{\mathbb{R}}(gf^\prime V_{xx})_{t}V_{t}{\rm d}x+\int_{\mathbb{R}}[g^\prime f(-p(\bar{v})_{xx}+\hat{v}_{t})]_{t}V_{t}{\rm d}x\\
  &~~~~+\int_{\mathbb{R}}[gf^\prime(\bar{v}_{x}+\hat{v}_{x})]_{t}V_{t}{\rm d}x\\
  &:= I_{14}+I_{15}+I_{16}+I_{17}.
    \end{split}
  \end{equation}
From \eqref{2.6}-\eqref{3.3} and Young's inequality, we have
  \begin{equation}\label{3.65}
    \begin{split}
    I_{14}&=\int_{\mathbb{R}}\left(g^\prime f\right)_{t}V_{xt}V_{t}{\rm d}x+\int_{\mathbb{R}}g^\prime fV_{xtt}V_{t}{\rm d}x\\
    &\leq -\frac{p^\prime(\bar{v})}{32}\|V_{xt}(t)\|^{2}+C(1+t)^{-3}\|V_{t}(t)\|^{2}-\int_{\mathbb{R}}g^\prime fV_{tt}V_{xt}{\rm d}x-\int_{\mathbb{R}}\left(g^\prime f\right)_{x}V_{tt}V_{t}{\rm d}x\\
    &\leq -\frac{p^\prime(\bar{v})}{16}\|V_{xt}(t)\|^{2}+C(1+t)^{-2}\|V_{t}(t)\|^{2}+C\|V_{tt}(t)\|^{2},
    \end{split}
  \end{equation}
  \begin{equation}\label{3.66}
    \begin{split}
    I_{15}&=\int_{\mathbb{R}}\left(gf^\prime\right)_{t}V_{xx}V_{t}{\rm d}x+\int_{\mathbb{R}}gf^\prime V_{xxt}V_{t}{\rm d}x\\
    &\leq C(1+t)^{-1}\|V_{t}(t)\|^{2}+C(1+t)^{-3}\|V_{xx}(t)\|^{2}-\int_{\mathbb{R}}gf^\prime V_{xt}^{2}{\rm d}x-\int_{\mathbb{R}}\left(gf^\prime\right)_{x}V_{xt}V_{t}{\rm d}x\\
    &\leq C(1+t)^{-1}\|V_{t}(t)\|^{2}+C(1+t)^{-3}\|V_{xx}(t)\|^{2}-\frac{p^\prime(\bar{v})}{16}\|V_{xt}(t)\|^{2}+C{\rm e}^{-t}\|V_{xt}(t)\|^{2},
    \end{split}
  \end{equation}
  and we can similarly prove
  \begin{equation}\label{3.67}
    I_{16}+I_{17} \leq C(1+t)^{-1}\|V_{t}(t)\|^{2}+C\|V_{tt}(t)\|^{2}+C\delta (1+t)^{-\frac{7}{2}}.
  \end{equation}
  Substituting \eqref{3.63} and \eqref{3.65}-\eqref{3.67} into \eqref{3.62}, we have
  \begin{equation}\label{3.68}
  \begin{split}
     &\frac{1}{2}\frac{{\rm d}}{{\rm d}t}\int_{\mathbb{R}}(V_{t}^{2}+2V_{t}V_{tt}){\rm d}x -\frac{1}{2}\int_{\mathbb{R}}p^\prime(\bar{v})V_{xt}^{2}{\rm d}x\\
      \leq&  C\|V_{tt}(t)\|^{2}+C{\rm e}^{-t}\|V_{xt}(t)\|^{2}+C(1+t)^{-2}\|V_{x}(t)\|^{2}+C(1+t)^{-3}\|V_{xx}(t)\|^{2}+C\delta (1+t)^{-\frac{7}{2}}\\&+C(1+t)^{-1}\|V_{t}(t)\|^{2}.
     \end{split}
  \end{equation}
  Next, by calculating $\int_{\mathbb{R}}V_{tt}\times \eqref{1.23}_{1t}{\rm d} x$, we have
\begin{equation}\label{3.69}
\begin{split}
  &\frac{1}{2}\frac{{\rm d}}{{\rm d}t}\int_{\mathbb{R}}(V_{tt}^{2}-p^\prime(\bar{v})V_{xt}^{2}){\rm d}x +\int_{\mathbb{R}}V_{tt}^{2}{\rm d}x \\
  =&-\frac{1}{2}\int_{\mathbb{R}}p^\prime(\bar{v})_{t}V_{xt}^{2}{\rm d}x+\int_{\mathbb{R}}\left[F_{1t}-\left(p^\prime(\bar{v})_{t}V_{x}\right)_{x}\right]V_{tt}{\rm d}x+\int_{\mathbb{R}}F_{2t}V_{tt}{\rm d}x .
  \end{split}
  \end{equation}
  The right hand side of \eqref{3.69} can be estimated as follows. From Lemma \ref{L2.1}, we have
  \begin{equation}\label{3.70}
  -\frac{1}{2}\int_{\mathbb{R}}p^\prime(\bar{v})_{t}V_{xt}^{2}{\rm d}x \leq C\delta(1+t)^{-1}\|V_{xt}(t)\|^{2}.	
  \end{equation}
  By direct calculation, we have
  \begin{equation}\label{3.71}
  \begin{split}
  &\int_{\mathbb{R}}\left[F_{1t}-\left(p^\prime(\bar{v})_{t}V_{x}\right)_{x}\right]V_{tt}{\rm d}x \\
   \leq& \frac{1}{16}\|V_{tt}(t)\|^{2}+\frac{1}{2}\frac{{\rm d}}{{\rm d}t}\int_{\mathbb{R}}\left[p^\prime(V_{x}+\bar{v}+\hat{v})-p^\prime(\bar{v})\right]V_{xt}^{2}{\rm d}x+C\delta (1+t)^{-\frac{9}{2}}+C\delta(1+t)^{-3}\|V_{x}(t)\|^{2}\\&
   +C(\delta+\varepsilon)(1+t)^{-1}\|V_{xt}(t)\|^{2}+C(1+t)^{-2}\|V_{xx}(t)\|^{2}.
   \end{split}
  \end{equation}
  Finally, notice that
  \begin{equation}\label{3.72}
  \begin{split}
  \int_{\mathbb{R}}F_{2t}V_{tt}{\rm d}x&=\int_{\mathbb{R}}(g^\prime fV_{xt})_{t}V_{tt}{\rm d}x+\int_{\mathbb{R}}(gf^\prime V_{xx})_{t}V_{tt}{\rm d}x+\int_{\mathbb{R}}[g^\prime f(-p(\bar{v})_{xx}+\hat{v}_{t})]_{t}V_{tt}{\rm d}x\\
  &~~~~+\int_{\mathbb{R}}[gf^\prime(\bar{v}_{x}+\hat{v}_{x})]_{t}V_{tt}{\rm d}x\\
  &:= I_{18}+I_{19}+I_{20}+I_{21}.
  \end{split}
   \end{equation}
   From \eqref{2.6}-\eqref{3.3} and Young's inequality, we get
  \begin{equation}\label{3.73}
   	\begin{split}
   I_{18}&=\int_{\mathbb{R}}\left(g^\prime f\right)_{t}V_{xt}V_{tt}{\rm d}x+\int_{\mathbb{R}}g^\prime fV_{xtt}V_{tt}{\rm d}x\\
   &\leq \frac{1}{16} \|V_{tt}(t)\|^{2}+C(\varepsilon +\delta)(1+t)^{-3}\|V_{xt}(t)\|^{2}+C{\rm e}^{-t}\|V_{xt}(t)\|^{2},	
   	\end{split}
   \end{equation}
   \begin{equation}\label{3.74}
    \begin{split}
    I_{19}&=\int_{\mathbb{R}}\left(gf^\prime\right)_{t}V_{xx}V_{tt}{\rm d}x+\int_{\mathbb{R}}gf^\prime V_{xxt}V_{tt}{\rm d}x\\
    &\leq \frac{1}{32} \|V_{tt}(t)\|^{2}+C(1+t)^{-4}\|V_{xx}(t)\|^{2}-\frac{1}{2}\frac{{\rm d}}{{\rm d}t}\int_{\mathbb{R}}gf^\prime V_{xt}^{2}{\rm d}x+\frac{1}{2}\int_{\mathbb{R}}\left(gf^\prime\right)_{t}V_{xt}^{2}{\rm d}x\\
    &~~~~-\int_{\mathbb{R}}\left(gf^\prime\right)_{x}V_{xt}V_{tt}{\rm d}x\\
    &\leq \frac{1}{16} \|V_{tt}(t)\|^{2}+C(1+t)^{-4}\|V_{xx}(t)\|^{2}+C(\varepsilon +\delta)(1+t)^{-2}\|V_{xt}(t)\|^{2}\\
    &~~~~-\frac{1}{2}\frac{{\rm d}}{{\rm d}t}\int_{\mathbb{R}}gf^\prime V_{xt}^{2}{\rm d}x+C{\rm e}^{-t}\|V_{xt}(t)\|^{2},		
    	\end{split}	
    \end{equation}
  and
  \begin{equation}\label{3.75}
  \begin{split}
  I_{20}+I_{21} &\leq C(1+t)^{-\frac{1}{2}}\int_{\mathbb{R}}(|\bar{v}_{xxt}|+|\bar{v}_{xx}||\bar{v}_{t}|+|\bar{v}_{x}||\bar{v}_{xt}|+|\bar{v}_{x}|^{2}|\bar{v}_{t}|+|\hat{v}_{tt}|)|V_{tt}|{\rm d}x\\
  &~~~~+C(1+t)^{-\frac{3}{2}}\int_{\mathbb{R}}(|\bar{v}_{xx}|+|\bar{v}_{x}|^{2}+|\hat{v}_{t}|)|V_{tt}|{\rm d}x+C(1+t)^{-1}\int_{\mathbb{R}}(|\bar{v}_{xt}|+|\hat{v}_{xt}|)|V_{tt}|{\rm d}x\\
  &~~~~+C(1+t)^{-2}\int_{\mathbb{R}}(|\bar{v}_{x}|+|\hat{v}_{x}|)|V_{tt}|{\rm d}x\\
  &\leq \frac{1}{16} \|V_{tt}(t)\|^{2}+C\delta(1+t)^{-\frac{9}{2}}.
  \end{split}	
  \end{equation}
Substituting \eqref{3.70}-\eqref{3.71} and \eqref{3.73}-\eqref{3.75} into \eqref{3.69}, we have
\begin{equation}\label{3.76}
\begin{split}
     &\frac{1}{2}\frac{{\rm d}}{{\rm d}t}\int_{\mathbb{R}}\left[V_{tt}^{2}+\left(gf^\prime-p^\prime(\bar{v})\right)V_{xt}^{2}\right]{\rm d}x+\frac{3}{4}\int_{\mathbb{R}}V_{tt}^{2}{\rm d}x \\
    \leq& \frac{1}{2}\frac{{\rm d}}{{\rm d}t}\int_{\mathbb{R}}\left[p^\prime(V_{x}+\bar{v}+\hat{v})-p^\prime(\bar{v})\right]V_{xt}^{2}{\rm d}x+C(\varepsilon +\delta)(1+t)^{-1}\|V_{xt}(t)\|^{2}+C\delta (1+t)^{-\frac{9}{2}}\\&
    +C(1+t)^{-2}\|V_{xx}(t)\|^{2}+C(1+t)^{-3}\|V_{x}(t)\|^{2}+C{\rm e}^{-t}\|V_{xt}(t)\|^{2}.
    \end{split}
\end{equation}
   Addition of $\lambda \cdot \eqref{3.68}$ to \eqref{3.76} $(0<\lambda\ll 1)$, one has that
  \begin{equation}\label{3.77}
   \begin{split}
   &\frac{1}{2}\frac{{\rm d}}{{\rm d}t}\int_{\mathbb{R}} \left[V_{tt}^{2}+\lambda V_{t}^{2}+2\lambda V_{t}V_{tt}+\left(gf^\prime-p^\prime(\bar{v})\right)V_{xt}^{2}\right]{\rm d}x+\frac{1}{2}\int_{\mathbb{R}} \left(V_{tt}^{2}-\lambda p^\prime(\bar{v})V_{xt}^{2}\right){\rm d} x \\
   \leq& \frac{1}{2}\frac{{\rm d}}{{\rm d}t}\int_{\mathbb{R}}\left[p^\prime(V_{x}+\bar{v}+\hat{v})-p^\prime(\bar{v})\right]V_{xt}^{2}{\rm d}x+ C\delta (1+t)^{-\frac{7}{2}}+C(1+t)^{-1}\|V_{t}(t)\|^{2}\\&
   ~~~~+C(1+t)^{-2}\|V_{x}(t)\|_{1}^{2}+C{\rm e}^{-t}\|V_{xt}(t)\|^{2}.
    \end{split}
    \end{equation}
  Integrating \eqref{3.77}, $(1+t)\cdot \eqref{3.77}$ and $(1+t)^{2}\cdot \eqref{3.77}$ over $[0,t]$ and using Lemma \ref{L3.2}-Lemma \ref{L3.4}, one gets
\begin{equation}\label{3.78}
   (1+t)^{2}(\|V_{t}(t)\|_{1}^{2}+\|V_{tt}(t)\|^{2})+\int_{0}^{t}(1+\tau)^{2}(\|V_{tt}(\tau)\|^{2}+\|V_{xt}(\tau)\|^{2}){\rm d}\tau
   \leq  C\left(\|V_{0}\|^{2}_{2}+\|z_{0}\|_{1}^{2}+ \delta \right).
  \end{equation}
  Then the integration of $(1+t)^{3} \cdot \eqref{3.76}$ over $[0,t]$ yields
  \begin{equation}\label{3.79}
   (1+t)^{3}(\|V_{xt}(t)\|^{2}+\|V_{tt}(t)\|^{2})+\int_{0}^{t}(1+\tau)^{3}\|V_{tt}(\tau)\|^{2}{\rm d}\tau
   \leq  C\left(\|V_{0}\|^{2}_{2}+\|z_{0}\|_{1}^{2}+ \delta \right) .
  \end{equation}
  Combining \eqref{3.78} and \eqref{3.79}, one can immediately obtain \eqref{3.61}. The proof of Lemma \ref{L3.5} is completed.
\end{proof}
\begin{lemma}\label{L3.6}
 Under the assumptions of Proposition \ref{p1}, if $N(T) \leq \varepsilon^{2}$ and $\delta$ are small enough, it holds that
 \begin{equation}\label{3.80}
 \begin{split}
   (1+t)^{4}&(\|V_{xtt}(t)\|^{2}+\|V_{xxt}(t)\|^{2})+\int_{0}^{t}\left[(1+\tau)^{3}\|V_{xxt}(\tau)\|^{2}+(1+\tau)^{4}\|V_{xtt}(\tau)\|^{2}\right]{\rm d}\tau \\&
   \leq  C\left(\|V_{0}\|^{2}_{3}+\|z_{0}\|_{2}^{2}+ \delta \right),
   \end{split}
  \end{equation}
  for $0 \leq t \leq T$.
\end{lemma}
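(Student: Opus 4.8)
The plan is to run, one level higher in regularity, the argument of Lemmas~\ref{L3.4}--\ref{L3.5}: since $V_{xtt}=\partial_{x}\partial_{t}^{2}V$ and $V_{xxt}=\partial_{x}^{2}\partial_{t}V$ are the mixed second derivatives of $\psi:=V_{t}$, I would work with the $t$-differentiated equation $\eqref{1.23}_{1t}$, i.e. $V_{ttt}+(p^\prime(\bar{v})V_{x})_{xt}+V_{tt}=F_{1t}+F_{2t}$, and with its $x$-derivative $\eqref{1.23}_{1xt}$, i.e. $V_{xttt}+(p^\prime(\bar{v})V_{x})_{xxt}+V_{xtt}=F_{1xt}+F_{2xt}$. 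Multiplying $\eqref{1.23}_{1t}$ by $-V_{xxt}$ and integrating over $\mathbb{R}$ (integrating by parts in $x$, using $-p^\prime(\bar{v})>0$) produces, exactly in the pattern of \eqref{3.22} and \eqref{3.62},
\[
\frac{1}{2}\frac{{\rm d}}{{\rm d}t}\int_{\mathbb{R}}\left(V_{xt}^{2}+2V_{xt}V_{xtt}\right){\rm d}x-\int_{\mathbb{R}}p^\prime(\bar{v})V_{xxt}^{2}{\rm d}x=\int_{\mathbb{R}}V_{xtt}^{2}{\rm d}x+(\text{l.o.t.})-\int_{\mathbb{R}}(F_{1t}+F_{2t})V_{xxt}{\rm d}x,
\]
and multiplying $\eqref{1.23}_{1xt}$ by $V_{xtt}$ produces, in the pattern of \eqref{3.26} and \eqref{3.69},
\[
\frac{1}{2}\frac{{\rm d}}{{\rm d}t}\int_{\mathbb{R}}\left(V_{xtt}^{2}-p^\prime(\bar{v})V_{xxt}^{2}\right){\rm d}x+\int_{\mathbb{R}}V_{xtt}^{2}{\rm d}x=(\text{l.o.t.})+\int_{\mathbb{R}}F_{1xt}V_{xtt}{\rm d}x+\int_{\mathbb{R}}F_{2xt}V_{xtt}{\rm d}x.
\]

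The $F_{1}$-contributions are treated exactly as in \cite{Nishihara1996} and in \eqref{3.15},~\eqref{3.28},~\eqref{3.50}. For the $F_{2}$-contributions I would use the expansion $F_{2}=g^\prime f(V_{xt}-p(\bar{v})_{xx}+\hat{v}_{t})+gf^\prime(V_{xx}+\bar{v}_{x}+\hat{v}_{x})$ from \eqref{3.7}, differentiate it once in $x$ and once in $t$, and bound each resulting piece by Lemma~\ref{L3.1} together with the {\it a priori} bounds \eqref{3.1}--\eqref{3.2}, Lemmas~\ref{L2.1}--\ref{L2.2} and Young's inequality, in the manner of \eqref{3.29},~\eqref{3.52}--\eqref{3.54},~\eqref{3.73}--\eqref{3.75}. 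Most pieces carry either factors $(1+t)^{-k/2}$ coming from $\bar{v}$ (controlled by Lemma~\ref{L2.1} and the already-established decay of $V_{x},V_{xx},V_{xxx},V_{xt},V_{tt}$) or factors ${\rm e}^{-t}$ coming from the corrections $\hat{u},\hat{v}$ (which carry no smallness but decay exponentially).

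The step I expect to be the main obstacle is the top-order part $\int_{\mathbb{R}}(gf^\prime V_{xx})_{xt}V_{xtt}{\rm d}x$ of $\int_{\mathbb{R}}F_{2xt}V_{xtt}{\rm d}x$. After expanding it and integrating by parts in $x$ one meets the term $-\frac{1}{2}\frac{{\rm d}}{{\rm d}t}\int_{\mathbb{R}}gf^\prime V_{xxt}^{2}{\rm d}x$, whose coefficient $gf^\prime$ carries no smallness (through $g$ it depends on $|u_{\pm}|$, which is not assumed small), so it cannot be absorbed by the dissipation. As in \eqref{3.30},~\eqref{3.55},~\eqref{3.76}, I would move it to the left-hand side and form the coercive quantity $\frac{1}{2}\frac{{\rm d}}{{\rm d}t}\int_{\mathbb{R}}(gf^\prime-p^\prime(\bar{v}))V_{xxt}^{2}{\rm d}x$, which is positive definite by the technical hypothesis \eqref{1.25a} (cf. \eqref{3.32}). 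The companion top-order piece $\int_{\mathbb{R}}g^\prime f V_{xxtt}V_{xtt}{\rm d}x=-\frac{1}{2}\int_{\mathbb{R}}(g^\prime f)_{x}V_{xtt}^{2}{\rm d}x$ is $\leq C(\varepsilon+\delta)(1+t)^{-1}\|V_{xtt}(t)\|^{2}$ by Lemma~\ref{L3.1}, hence is absorbed by the $\frac{3}{4}\|V_{xtt}(t)\|^{2}$ dissipation; and the remaining top-order term combines with $(p^\prime(\bar{v})V_{x})_{xxt}$ into the small remainder $\frac{1}{2}\frac{{\rm d}}{{\rm d}t}\int_{\mathbb{R}}\left[p^\prime(V_{x}+\bar{v}+\hat{v})-p^\prime(\bar{v})\right]V_{xxt}^{2}{\rm d}x$, as in \eqref{3.50}.

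Adding $\lambda\cdot(\text{first identity})$ to $(\text{second identity})$ with $0<\lambda\ll1$ then yields a differential inequality of the same type as \eqref{3.56} and \eqref{3.77}:
\[
\begin{split}
&\frac{1}{2}\frac{{\rm d}}{{\rm d}t}\int_{\mathbb{R}}\left[V_{xtt}^{2}+\lambda V_{xt}^{2}+2\lambda V_{xt}V_{xtt}+(gf^\prime-p^\prime(\bar{v}))V_{xxt}^{2}\right]{\rm d}x+\frac{1}{2}\int_{\mathbb{R}}\left(V_{xtt}^{2}-\lambda p^\prime(\bar{v})V_{xxt}^{2}\right){\rm d}x\\
&\qquad\leq\frac{1}{2}\frac{{\rm d}}{{\rm d}t}\int_{\mathbb{R}}\left[p^\prime(V_{x}+\bar{v}+\hat{v})-p^\prime(\bar{v})\right]V_{xxt}^{2}{\rm d}x+C{\rm e}^{-t}\|V_{xxt}(t)\|^{2}+C\delta(1+t)^{-\frac{9}{2}}\\
&\qquad\quad+C(1+t)^{-1}\|V_{xt}(t)\|^{2}+C(1+t)^{-2}\|V_{xx}(t)\|^{2}+C(1+t)^{-3}\left(\|V_{x}(t)\|^{2}+\|V_{t}(t)\|^{2}\right).
\end{split}
\]
Finally I would integrate this inequality against the weights $(1+t)^{j}$, $j=0,1,2,3,4$, in succession over $[0,t]$, using the lower-weight outcomes to control the boundary terms $\int_{0}^{t}(1+\tau)^{j-1}(\text{coercive quantity})\,{\rm d}\tau$ that arise at each step, exactly as in \eqref{3.78}--\eqref{3.79}. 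Every lower-order term on the right is then time-integrable thanks to Lemmas~\ref{L3.2}--\ref{L3.5} (for instance $(1+t)^{-2}\|V_{xx}\|^{2}$ against the weight $(1+t)^{4}$ uses $\int_{0}^{t}(1+\tau)^{2}\|V_{xxx}(\tau)\|^{2}{\rm d}\tau\leq C$ from \eqref{3.40} after one integration by parts); the $C{\rm e}^{-t}$-term is removed by Gronwall's inequality as in \eqref{3.57}--\eqref{3.58}; the small $\frac{{\rm d}}{{\rm d}t}$-remainder is absorbed into the left-hand coercive quantity using \eqref{3.2}; and \eqref{3.80} follows.
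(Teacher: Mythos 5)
Your overall strategy is correct and captures the essential difficulty (the $gf'$-term with no smallness, absorbed via the hypothesis \eqref{1.25a} into the coercive quantity $gf'-p'(\bar v)$), but you set up \emph{two} energy identities and a $\lambda$-combination where the paper uses only \emph{one}. The paper's proof of Lemma~\ref{L3.6} breaks the Lemma~\ref{L3.4}/\ref{L3.5} pattern: it computes only $\int_{\mathbb{R}}\partial_{xt}\eqref{1.23}_{1}\times V_{xtt}\,{\rm d}x$ (your ``second identity'', equal to \eqref{3.81}), estimates the $F_{2xt}$-contribution exactly as you describe to reach \eqref{3.88}, and then integrates $(1+t)^{k}\cdot\eqref{3.88}$ for $k=0,\dots,4$. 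The companion ``first identity'' (multiplying $\eqref{1.23}_{1t}$ by $-V_{xxt}$) is not needed because the $\|V_{xxt}\|^2$-dissipation it would regenerate, namely $\int_{0}^{t}(1+\tau)^{3}\|V_{xxt}(\tau)\|^{2}{\rm d}\tau\le C$, is \emph{already} supplied by \eqref{3.40} from Lemma~\ref{L3.4} (indeed this integral bound in \eqref{3.80} is verbatim the one in \eqref{3.40}); the only quantity the $\|V_{xxt}\|^2$-part of the weighted time-derivative produces after integration by parts in $t$ is $\int_{0}^{t}(1+\tau)^{k-1}\|V_{xxt}\|^{2}{\rm d}\tau$ with $k-1\le3$, and $(1+\tau)^{k-1}\le(1+\tau)^{3}$ closes it. Your route would also work but carries the extra identity along for no gain.

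Two details in your sketch need repair. First, your final differential inequality lists $C(1+t)^{-2}\|V_{xx}(t)\|^{2}$ and $C(1+t)^{-1}\|V_{xt}(t)\|^{2}$ on the right; after multiplying by $(1+t)^{4}$ these become $\int_{0}^{t}(1+\tau)^{2}\|V_{xx}\|^{2}{\rm d}\tau$ and $\int_{0}^{t}(1+\tau)^{3}\|V_{xt}\|^{2}{\rm d}\tau$, neither of which is supplied by Lemmas~\ref{L3.2}--\ref{L3.5} (only $\int_0^t(1+\tau)\|V_{xx}\|^2$ and $\int_0^t(1+\tau)^2\|V_{xt}\|^2$ are bounded by \eqref{3.36}). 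The paper's \eqref{3.88} carries $C(1+t)^{-3}\|V_{xx}\|^{2}$ and $C(1+t)^{-2}\|V_{xt}\|^{2}$, which \emph{are} integrable against $(1+t)^{4}$; you should track the extra power of $(1+t)$ coming from the additional $t$-derivative in $F_{1xt}$ and $F_{2xt}$ (via $\bar v_t\sim(1+t)^{-1}\bar v_x$ and the bounds in Lemma~\ref{L3.1}). Second, your parenthetical claim that $\int_{0}^{t}(1+\tau)^{2}\|V_{xx}(\tau)\|^{2}{\rm d}\tau$ follows from \eqref{3.40} ``after one integration by parts'' does not hold as stated: interpolating $\|V_{xx}\|^2\le\|V_x\|\,\|V_{xxx}\|$ against the available bounds on $\int(1+\tau)^2\|V_x\|^2$ (not bounded) and $\int(1+\tau)^2\|V_{xxx}\|^2$ (bounded by \eqref{3.40}) does not close. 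Once the decay rate of the offending term is corrected to $(1+t)^{-3}$, this patch is unnecessary.
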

\begin{proof}
 Having $\int_{\mathbb{R}}\partial_{xt}\eqref{1.23}_{1}\times V_{xtt}{\rm d}x$, we obtain
\begin{equation}\label{3.81}
\begin{split}
 &\frac{1}{2}\frac{{\rm d}}{{\rm d}t}\int_{\mathbb{R}}(V_{xtt}^{2}-p^\prime(\bar{v})V_{xxt}^{2}){\rm d}x +\int_{\mathbb{R}}V_{xtt}^{2}{\rm d}x \\
  =&-\frac{1}{2}\int_{\mathbb{R}}p^\prime(\bar{v})_{t}V_{xxt}^{2}{\rm d}x+\int_{\mathbb{R}}\left[F_{1xt}-\left(p^\prime(\bar{v})_{t}V_{xx}+(p^\prime(\bar{v})_{x}V_{x})_{t}\right)_{x}\right]V_{xtt}{\rm d}x+\int_{\mathbb{R}}F_{2xt}V_{xtt}{\rm d}x.
  \end{split}
  \end{equation}
  We estimate the right hand side of \eqref{3.81} as follows. From Lemma \ref{L2.1}, we have
  \begin{equation}\label{3.82}
   -\frac{1}{2}\int_{\mathbb{R}}p^\prime(\bar{v})_{t}V_{xxt}^{2}{\rm d}x \leq C\delta(1+t)^{-1}\|V_{xxt}(t)\|^{2}.	
   \end{equation}
  A directly calculation shows that
  \begin{equation}\label{3.83}
  \begin{split}
   &\int_{\mathbb{R}}\left[F_{1xt}-\left(p^\prime(\bar{v})_{t}V_{xx}+(p^\prime(\bar{v})_{x}V_{x})_{t}\right)_{x}\right]V_{xtt}{\rm d}x \\
   \leq& \frac{1}{16}\|V_{xtt}(t)\|^{2}+\frac{1}{2}\frac{{\rm d}}{{\rm d}t}\int_{\mathbb{R}}\left[p^\prime(V_{x}+\bar{v}+\hat{v})-p^\prime(\bar{v})\right]V_{xxt}^{2}{\rm d}x+C\delta (1+t)^{-\frac{11}{2}}+C(1+t)^{-2}\|V_{xt}(t)\|^{2}\\&
   +C(1+t)^{-3}\|V_{xx}(t)\|^{2}+C(1+t)^{-2}\|V_{xxx}(t)\|^{2}+C(\delta+\varepsilon)(1+t)^{-1}\|V_{xxt}(t)\|^{2}\\&
   +C\delta(1+t)^{-4}\|V_{x}(t)\|^{2}.
   \end{split}
  \end{equation}
  Now we deal with the last term of the righthand side of \eqref{3.81}. Notice
  \begin{equation}\label{3.84}
  	\begin{split}
  	\int_{\mathbb{R}}F_{2xt}V_{xtt}{\rm d}x&=\int_{\mathbb{R}}\left(g^\prime fV_{xt}\right)_{xt}V_{xtt}{\rm d}x+\int_{\mathbb{R}}\left(gf^\prime V_{xx}\right)_{xt}V_{xtt}{\rm d}x+\int_{\mathbb{R}}(g^\prime f(-p(\bar{v})_{xt}+\hat{v}_{t}))_{xx}V_{xtt}{\rm d}x\\
&+\int_{\mathbb{R}}(gf^\prime(\bar{v}_{x}+\hat{v}_{x}))_{xt}V_{xtt}{\rm d}x:=I_{22}+I_{23}+I_{24}+I_{25}.	
  	\end{split}
  \end{equation}
  By using \eqref{2.6}-\eqref{3.3} and Young's inequality, we can get
  \begin{equation}\label{3.85}
  	\begin{split}
  	I_{22}&=\int_{\mathbb{R}}\left(g^\prime f\right)_{xt}V_{xt}V_{xtt}{\rm d}x+\int_{\mathbb{R}}\left(g^\prime f\right)_{x}V_{xtt}^{2}{\rm d}x+\int_{\mathbb{R}}\left(g^\prime f\right)_{t}V_{xxt}V_{xtt}{\rm d}x\\
  	&~~~~+\int_{\mathbb{R}}g^\prime fV_{xxtt}V_{xtt}{\rm d}x\\
  	&\leq \frac{1}{32} \|V_{xtt}(t)\|^{2}+C(1+t)^{-4}\|V_{xt}(t)\|^{2}+C(\varepsilon +\delta)(1+t)^{-3}\|V_{xxt}(t)\|^{2}+C{\rm e}^{-t}\|V_{xxt}(t)\|^{2}\\
  	&~~~~-\frac{1}{2}\int_{\mathbb{R}}\left(g^\prime f\right)_{x}V_{xtt}^{2}{\rm d}x\\
  	&\leq \frac{1}{16} \|V_{xtt}(t)\|^{2}+C(1+t)^{-4}\|V_{xt}(t)\|^{2}+C(\varepsilon +\delta)(1+t)^{-3}\|V_{xxt}(t)\|^{2}+C{\rm e}^{-t}\|V_{xxt}(t)\|^{2},	
  	\end{split}
  \end{equation}
  and
  \begin{equation}\label{3.86}
  \begin{split}
  	I_{23}&=\int_{\mathbb{R}}\left(gf^\prime\right)_{xt}V_{xx}V_{xtt}{\rm d}x+\int_{\mathbb{R}}\left(gf^\prime\right)_{x}V_{xxt}V_{xtt}{\rm d}x-\int_{\mathbb{R}}\left(gf^\prime\right)_{t}V_{xxx}V_{xtt}{\rm d}x\\
  	&~~~~+\int_{\mathbb{R}}gf^\prime V_{xxxt}V_{xtt}{\rm d}x\\
  	&\leq \frac{1}{32} \|V_{xtt}(t)\|^{2}+C(1+t)^{-5}\|V_{xx}(t)\|^{2}+C(\varepsilon +\delta)(1+t)^{-3}\|V_{xxt}(t)\|^{2}\\
  	&~~~~+C(1+t)^{-4}\|V_{xxx}(t)\|^{2}-\frac{1}{2}\frac{{\rm d}}{{\rm d}t}\int_{\mathbb{R}}gf^\prime V_{xxt}^{2}{\rm d}x+\frac{1}{2}\int_{\mathbb{R}}\left(gf^\prime\right)_{t}V_{xxt}^{2}{\rm d}x\\
  	&\leq \frac{1}{16} \|V_{xtt}(t)\|^{2}+C(1+t)^{-5}\|V_{xx}(t)\|^{2}+C(\varepsilon +\delta)(1+t)^{-2}\|V_{xxt}(t)\|^{2}\\
  	&~~~~+C(1+t)^{-4}\|V_{xxx}(t)\|^{2}-\frac{1}{2}\frac{{\rm d}}{{\rm d}t}\int_{\mathbb{R}}gf^\prime V_{xxt}^{2}{\rm d}x+C{\rm e}^{-t}\|V_{xxt}(t)\|^{2}.	
  	\end{split}	
  \end{equation}
  Furthermore, we can similarly prove
  \begin{equation}\label{3.87}
  	I_{24}+I_{25}\leq \frac{1}{16} \|V_{xtt}(t)\|^{2}+C\delta (1+t)^{-\frac{11}{2}}+C(\varepsilon +\delta)(1+t)^{-3}\|V_{xxt}(t)\|^{2}.
  \end{equation}
Substituting \eqref{3.82}-\eqref{3.83} and \eqref{3.85}-\eqref{3.87} into \eqref{3.81}, we have
\begin{equation}\label{3.88}
\begin{split}
    &\frac{1}{2}\frac{{\rm d}}{{\rm d}t}\int_{\mathbb{R}}\left[V_{xtt}^{2}+\left(gf^\prime-p^\prime(\bar{v})\right)V_{xxt}^{2}\right]{\rm d}x+\frac{3}{4}\int_{\mathbb{R}}V_{xtt}^{2}{\rm d}x \\
    \leq& \frac{1}{2}\frac{{\rm d}}{{\rm d}t}\int_{\mathbb{R}}\left[p^\prime(V_{x}+\bar{v}+\hat{v})-p^\prime(\bar{v})\right]V_{xxt}^{2}{\rm d}x+C(1+t)^{-1}\|V_{xxt}(t)\|^{2}+C\delta (1+t)^{-\frac{11}{2}}\\&
    ~~~~+C(1+t)^{-3}\|V_{xx}(t)\|^{2}+C(1+t)^{-4}\|V_{x}(t)\|^{2}+C(1+t)^{-2}\|V_{xxx}(t)\|^{2}\\&
    ~~~~+C(1+t)^{-2}\|V_{xt}(t)\|^{2}.
    \end{split}
\end{equation}
Integrating $(1+t)^{k} \cdot \eqref{3.88}$ over $[0,t]$, $k=0,1,2,3,4,$ and using Lemma \ref{L3.2}-Lemma \ref{L3.5}, one can immediately obtain \eqref{3.80}. The proof of Lemma \ref{L3.6} is completed.
\end{proof}
\begin{lemma}\label{L3.7}
 Under the assumptions of Proposition \ref{p1}, if $N(T) \leq \varepsilon^{2}$ and $\delta$ are small enough, it holds that
 \begin{equation}\label{3.89}
 \begin{split}
   (1+t)^{4}\|V_{tt}(t)\|^{2}&+(1+t)^{5}(\|V_{ttt}(t)\|^{2}+\|V_{xtt}(t)\|^{2})+\int_{0}^{t}(1+\tau)^{5}\|V_{ttt}(\tau)\|^{2}{\rm d}\tau\\
   \leq&  C\left(\|V_{0}\|^{2}_{3}+\|z_{0}\|_{2}^{2}+ \delta \right),
   \end{split}
  \end{equation}
  for $0 \leq t \leq T$.
\end{lemma}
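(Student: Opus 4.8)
The plan is to run the energy scheme of Lemmas \ref{L3.2}--\ref{L3.6} at the level of two time derivatives of $\eqref{1.23}_1$, i.e.\ of the equation $V_{tttt}+(p^\prime(\bar v)V_x)_{xtt}+V_{ttt}=F_{1tt}+F_{2tt}$ (recall $\alpha=1$). First I would derive two basic identities. Multiplying this equation by $V_{ttt}$, integrating over $\mathbb{R}$ and integrating by parts in the elliptic term — the leading piece $-\int_{\mathbb{R}}p^\prime(\bar v)V_{xtt}V_{xttt}\,{\rm d}x$ producing $-\tfrac12\tfrac{{\rm d}}{{\rm d}t}\int_{\mathbb{R}}p^\prime(\bar v)V_{xtt}^2\,{\rm d}x$ and the $p^\prime(\bar v)_t,p^\prime(\bar v)_{tt}$ pieces being moved back onto $V_{ttt}$ — gives
\begin{equation}\notag
 \tfrac12\tfrac{{\rm d}}{{\rm d}t}\int_{\mathbb{R}}\bigl(V_{ttt}^2-p^\prime(\bar v)V_{xtt}^2\bigr)\,{\rm d}x+\int_{\mathbb{R}}V_{ttt}^2\,{\rm d}x=\cdots+\int_{\mathbb{R}}F_{2tt}V_{ttt}\,{\rm d}x,
\end{equation}
while multiplying by $V_{tt}$ gives the companion identity
\begin{equation}\notag
 \tfrac12\tfrac{{\rm d}}{{\rm d}t}\int_{\mathbb{R}}\bigl(V_{tt}^2+2V_{tt}V_{ttt}\bigr)\,{\rm d}x-\int_{\mathbb{R}}p^\prime(\bar v)V_{xtt}^2\,{\rm d}x=\int_{\mathbb{R}}V_{ttt}^2\,{\rm d}x+\cdots+\int_{\mathbb{R}}F_{2tt}V_{tt}\,{\rm d}x,
\end{equation}
the ellipses standing for the $F_{1tt}$-contributions and the remainder terms generated by the derivatives of $p^\prime(\bar v)$ (the $V_{xttt}$-terms coming from $p^\prime(\bar v)_t$, $p^\prime(\bar v)_{tt}$ being integrated by parts back onto $V_{ttt}$).

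Next I would estimate the forcing. The $F_{1tt}$-terms are handled exactly as in \cite{Nishihara1996} via \eqref{1.24}, \eqref{2.6} and Taylor's expansion: they produce $\tfrac1{16}\|V_{ttt}(t)\|^2$, the storage term $\tfrac12\tfrac{{\rm d}}{{\rm d}t}\int_{\mathbb{R}}[p^\prime(V_x+\bar v+\hat v)-p^\prime(\bar v)]V_{xtt}^2\,{\rm d}x$ (coefficient $O(\varepsilon+\delta)$ by \eqref{3.2} and Lemma \ref{L2.2}), a harmless $C\delta(1+t)^{-13/2}$, and already-controlled lower-order quantities of the type $(1+t)^{-j}\|V_x\|^2,\,(1+t)^{-j}\|V_{xx}\|^2,\,(1+t)^{-j}\|V_{xt}\|^2,\,(1+t)^{-j}\|V_{xxt}\|^2$. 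For $F_{2tt}$, writing $F_2=g^\prime f(V_{xt}-p(\bar v)_{xx}+\hat v_t)+gf^\prime(V_{xx}+\bar v_x+\hat v_x)$ as in \eqref{3.7} and differentiating twice in $t$, the two dominant pieces are $g^\prime fV_{xttt}$ and $gf^\prime V_{xxtt}$. Integration by parts in $x$ gives $\int_{\mathbb{R}}g^\prime fV_{xttt}V_{ttt}\,{\rm d}x=-\tfrac12\int_{\mathbb{R}}(g^\prime f)_xV_{ttt}^2\,{\rm d}x$, absorbed by the dissipation since $|(g^\prime f)_x|\le C(\varepsilon+\delta)(1+t)^{-1}$ by Lemma \ref{L3.1}, and
\begin{equation}\notag
 \int_{\mathbb{R}}gf^\prime V_{xxtt}V_{ttt}\,{\rm d}x=-\int_{\mathbb{R}}(gf^\prime)_xV_{xtt}V_{ttt}\,{\rm d}x-\tfrac12\tfrac{{\rm d}}{{\rm d}t}\int_{\mathbb{R}}gf^\prime V_{xtt}^2\,{\rm d}x+\tfrac12\int_{\mathbb{R}}(gf^\prime)_tV_{xtt}^2\,{\rm d}x .
\end{equation}
Exactly as in \eqref{3.30}, \eqref{3.55} and \eqref{3.88}, the structurally crucial point is that the bad storage term $-\tfrac12\tfrac{{\rm d}}{{\rm d}t}\int_{\mathbb{R}}gf^\prime V_{xtt}^2\,{\rm d}x$ combines with $-\tfrac12\tfrac{{\rm d}}{{\rm d}t}\int_{\mathbb{R}}p^\prime(\bar v)V_{xtt}^2\,{\rm d}x$ into $\tfrac12\tfrac{{\rm d}}{{\rm d}t}\int_{\mathbb{R}}(gf^\prime-p^\prime(\bar v))V_{xtt}^2\,{\rm d}x$, whose coefficient is strictly positive by \eqref{3.32} (equivalently the hypothesis \eqref{1.25a}); the remaining $\tfrac12\int_{\mathbb{R}}(gf^\prime)_tV_{xtt}^2\,{\rm d}x$ yields $C(\varepsilon+\delta)(1+t)^{-2}\|V_{xtt}(t)\|^2+C{\rm e}^{-t}\|V_{xtt}(t)\|^2$ by Lemma \ref{L3.1}. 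Every remaining term of $F_{2tt}$ — those with $(g^\prime f)_{tt},(g^\prime f)_t,(gf^\prime)_{tt},(gf^\prime)_t$ and those carrying derivatives of $\bar v,\hat v,\bar u,\hat u$ — is estimated by Young's inequality using Lemma \ref{L3.1}, Lemmas \ref{L2.1}--\ref{L2.2}, the {\it a priori} bounds \eqref{3.1}--\eqref{3.2} and Lemmas \ref{L3.2}--\ref{L3.6}, and produces only $\tfrac1{16}\|V_{ttt}(t)\|^2$, small multiples of $(1+t)^{-1}\|V_{xtt}(t)\|^2$, exponential terms $C{\rm e}^{-t}\|V_{xtt}(t)\|^2$, and lower-order quantities with sufficiently decaying weights.

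Combining $\lambda$ times the companion identity with the first one ($0<\lambda\ll1$), transferring all storage terms to the left and using \eqref{3.32} together with $\varepsilon+\delta\ll1$, one obtains a functional $\mathcal{E}(t)$ equivalent to $\|V_{ttt}(t)\|^2+\|V_{xtt}(t)\|^2+\lambda\|V_{tt}(t)\|^2$ satisfying $\tfrac{{\rm d}}{{\rm d}t}\mathcal{E}(t)+c(\|V_{ttt}(t)\|^2+\|V_{xtt}(t)\|^2)\le C{\rm e}^{-t}\|V_{xtt}(t)\|^2+C\delta(1+t)^{-13/2}+(\text{controlled lower-order terms})$. The exponential term is harmless upon integration because $\|V_{xtt}(t)\|^2\le C(\|V_0\|_3^2+\|z_0\|_2^2+\delta)$ by Lemma \ref{L3.6} and ${\rm e}^{-\tau}(1+\tau)^5$ is integrable, and the lower-order forcing is integrable against the dissipation integrals and pointwise decay rates of Lemmas \ref{L3.2}--\ref{L3.6}. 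Integrating $(1+t)^k$ times this inequality over $[0,t]$ for $k=0,1,2,3,4$ in turn — at each step the term $k(1+t)^{k-1}\mathcal{E}(t)$ being controlled by the dissipation integral from the previous step together with Lemmas \ref{L3.5}--\ref{L3.6} — yields $(1+t)^4\mathcal{E}(t)+\int_0^t(1+\tau)^4(\|V_{ttt}(\tau)\|^2+\|V_{xtt}(\tau)\|^2)\,{\rm d}\tau\le C(\|V_0\|_3^2+\|z_0\|_2^2+\delta)$, which in particular gives the bound for $(1+t)^4\|V_{tt}(t)\|^2$. Finally, discarding the $\lambda$-part and using the functional $\mathcal{E}_B(t)\sim\|V_{ttt}(t)\|^2+\|V_{xtt}(t)\|^2$ coming from the first identity alone, whose dissipation is $\sim\|V_{ttt}(t)\|^2$, I multiply its differential inequality by $(1+t)^5$ and integrate over $[0,t]$: now $5(1+t)^4\mathcal{E}_B(t)$ is controlled by the weight-$4$ integral just obtained and by Lemma \ref{L3.6}, the $(1+t)^5$-weighted lower-order forcing remains integrable by Lemmas \ref{L3.2}--\ref{L3.6}, and we reach $(1+t)^5(\|V_{ttt}(t)\|^2+\|V_{xtt}(t)\|^2)+\int_0^t(1+\tau)^5\|V_{ttt}(\tau)\|^2\,{\rm d}\tau\le C(\|V_0\|_3^2+\|z_0\|_2^2+\delta)$, i.e.\ \eqref{3.89}.

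The main obstacle is the sheer number of terms generated by $F_{2tt}$: since $F_2=(gf)_x$ already carries one $x$-derivative while $g=g(u)$, $f=f(v)$ depend on $u=V_t+\bar u+\hat u$ and $v=V_x+\bar v+\hat v$, differentiating twice in $t$ produces many products whose highest derivatives, $V_{xttt}$ and $V_{xxtt}$, must be relocated onto $V_{ttt}$ or absorbed into storage terms by integration by parts, and for each resulting coefficient one must verify, via the precise bounds of Lemma \ref{L3.1}, that its decay is fast enough to survive the $(1+t)^5$ weight. The two delicate structural points are (i) the absorption of $-\tfrac12\tfrac{{\rm d}}{{\rm d}t}\int_{\mathbb{R}}gf^\prime V_{xtt}^2\,{\rm d}x$ by the leading storage term through the sign condition \eqref{3.32}; and (ii) the choice of functional at the top weight — since $\int_0^t(1+\tau)^4\|V_{tt}(\tau)\|^2\,{\rm d}\tau$ is not furnished by the earlier lemmas, the combined functional cannot be pushed to $k=5$, so one must revert to $\mathcal{E}_B$, mirroring the passage from \eqref{3.77} to \eqref{3.79} in Lemma \ref{L3.5}.
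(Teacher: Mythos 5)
Your proposal is correct and follows essentially the same path as the paper: you form the two energy identities from testing $\partial_t^2\eqref{1.23}_1$ against $V_{tt}$ and against $V_{ttt}$ (the paper's \eqref{3.90} and \eqref{3.94}), estimate $F_{1tt}$ and $F_{2tt}$ via Lemma \ref{L3.1} and the sign condition \eqref{3.32}, combine with a small $\lambda$, integrate against $(1+t)^k$ for $k\le 4$ to get the weight-$4$ bound, and then revert to the single identity \eqref{3.98} for the $(1+t)^5$ step — exactly the paper's \eqref{3.99}--\eqref{3.101}. The two structural points you flag (absorbing $-\tfrac12\tfrac{{\rm d}}{{\rm d}t}\int gf^{\prime}V_{xtt}^2\,{\rm d}x$ via \eqref{1.25a}, and switching away from the $\lambda$-combined functional at top weight, mirroring \eqref{3.77}$\to$\eqref{3.79}) are precisely the mechanisms the paper uses.
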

\begin{proof}
By calculating $\int_{\mathbb{R}}V_{tt}\times \eqref{1.23}_{1tt}{\rm d}x$, we first have
\begin{equation}\label{3.90}
  \begin{split}
  \frac{1}{2}\frac{{\rm d}}{{\rm d}t}&\int_{\mathbb{R}} \left(V_{tt}^{2}+2V_{tt}V_{ttt}\right){\rm d} x-\int_{\mathbb{R}} p^\prime(\bar{v})V_{xtt}^{2}{\rm d} x\\
  &=\int_{\mathbb{R}}V_{ttt}^{2}{\rm d}x+\int_{\mathbb{R}}[F_{1tt}V_{tt}+(p^\prime(\bar{v})_{tt}V_{x}+2p^\prime(\bar{v})_{t}V_{xt})V_{xtt}]{\rm d}x+\int_{\mathbb{R}}F_{2tt}V_{tt}{\rm d}x.
  \end{split}
  \end{equation}
  It is not hard to get that
\begin{equation}\label{3.91}
 \begin{split}
 &\int_{\mathbb{R}}[F_{1tt}V_{tt}+(p^\prime(\bar{v})_{tt}V_{x}+2p^\prime(\bar{v})_{t}V_{xt})V_{xtt}]{\rm d}x\\
 \leq& -\frac{p^\prime(\bar{v})}{16}\|V_{xtt}(t)\|^{2}+C(1+t)^{-4}\|V_{x}(t)\|^{2}+C(1+t)^{-2}\|V_{xt}(t)\|^{2}+C\delta(1+t)^{-\frac{11}{2}},
 \end{split}
\end{equation}
and
\begin{align}\label{3.92}
 \int_{\mathbb{R}}F_{2tt}V_{tt}{\rm d}x=&\int_{\mathbb{R}}\left(g^\prime fV_{xt}\right)_{tt}V_{tt}{\rm d}x+\int_{\mathbb{R}}\left(gf^\prime V_{xx}\right)_{tt}V_{tt}{\rm d}x+\int_{\mathbb{R}}(g^\prime f(-p(\bar{v})_{xx}+\hat{v}_{t}))_{tt}V_{tt}{\rm d}x\nonumber\\
&+\int_{\mathbb{R}}(gf^\prime(\bar{v}_{x}+\hat{v}_{x}))_{tt}V_{tt}{\rm d}x\nonumber\\
\leq& -\frac{p^\prime(\bar{v})}{16}\|V_{xtt}(t)\|^{2}+C(1+t)^{-1}\|V_{tt}(t)\|^{2}+C(1+t)^{-4}\|V_{xt}(t)\|^{2}+C\|V_{ttt}(t)\|^{2}\nonumber\\
&+C(1+t)^{-5}\|V_{xx}(t)\|^{2}+C(1+t)^{-3}\|V_{xxt}(t)\|^{2}+C\delta(1+t)^{-\frac{11}{2}}+C{\rm e}^{-t}\|V_{xtt}(t)\|^{2}.
\end{align}
Substituting \eqref{3.91}-\eqref{3.92} into \eqref{3.90}, we have
\begin{equation}\label{3.93}
  \begin{split}
  &\frac{1}{2}\frac{{\rm d}}{{\rm d}t}\int_{\mathbb{R}} \left(V_{tt}^{2}+2V_{tt}V_{ttt}\right){\rm d} x-\frac{3}{4}\int_{\mathbb{R}} p^\prime(\bar{v})V_{xtt}^{2}{\rm d} x\\
  \leq&C(1+t)^{-1}\|V_{tt}(t)\|^{2}+C(1+t)^{-2}\|V_{xt}(t)\|_{1}^{2}+C\|V_{ttt}(t)\|^{2}+C(1+t)^{-4}\|V_{x}(t)\|_{1}^{2}\\
&+C{\rm e}^{-t}\|V_{xtt}(t)\|^{2}+C\delta(1+t)^{-\frac{11}{2}}.
  \end{split}
  \end{equation}
Next, having $\int_{\mathbb{R}}\partial_{tt}\eqref{1.23}_{1}\times V_{ttt}{\rm d}x$, we first obtain
\begin{equation}\label{3.94}
\begin{split}
  &\frac{1}{2}\frac{{\rm d}}{{\rm d}t}\int_{\mathbb{R}}(V_{ttt}^{2}-p^\prime(\bar{v})V_{xtt}^{2}{\rm d}x+\int_{\mathbb{R}}V_{ttt}^{2}{\rm d}x\\
  & = -\frac{1}{2}\int_{\mathbb{R}}p^\prime(\bar{v})_{t}V_{xtt}^{2}{\rm d}x+\int_{\mathbb{R}}\left[(F_{1tt}-\left(\left(p^\prime(\bar{v})\right)_{tt}V_{x}+\left(p^\prime(\bar{v})\right)_{t}V_{xt}\right)_{x}\right]V_{ttt}{\rm d}x\\
  &~~~~+\int_{\mathbb{R}}F_{2tt}V_{ttt}{\rm d}x .
  \end{split}
  \end{equation}
  From Lemma \ref{L2.1}, we have
  \begin{equation}\label{3.95}
  	-\frac{1}{2}\int_{\mathbb{R}}p^\prime(\bar{v})_{t}V_{xtt}^{2}{\rm d}x \leq C\delta (1+t)^{-1}\|V_{xtt}(t)\|^{2}.
  \end{equation}
  A directly calculation shows that
  \begin{equation}\label{3.96}
  \begin{split}
   &\int_{\mathbb{R}}\left[(F_{1tt}-\left(\left(p^\prime(\bar{v})\right)_{tt}V_{x}+\left(p^\prime(\bar{v})\right)_{t}V_{xt}\right)_{x}\right]V_{ttt}{\rm d}x \\&
   \leq \frac{1}{16}\|V_{ttt}(t)\|^{2}+\frac{1}{2}\frac{{\rm d}}{{\rm d}t}\int_{\mathbb{R}}[p^\prime(V_{x}+\bar{v}+\hat{v})-p^\prime(\bar{v})]V_{xtt}^{2}{\rm d}x+C\delta (1+t)^{-\frac{13}{2}}+C(1+t)^{-3}\|V_{xt}(t)\|^{2}\\&
   ~~~~+C(1+t)^{-1}\|V_{xtt}(t)\|^{2}+C(1+t)^{-2}\|V_{xxt}(t)\|^{2}+C(1+t)^{-5}\|V_{x}(t)\|^{2}\\&
   ~~~~+C(1+t)^{-4}\|V_{xx}(t)\|^{2}.
   \end{split}
  \end{equation}
  Similar calculations to \eqref{3.52}-\eqref{3.54} yields
  \begin{equation}\label{3.97}
 \begin{split}
    &\int_{\mathbb{R}}F_{2tt}V_{ttt}{\rm d}x \\&
     \leq \frac{1}{16} \|V_{ttt}(t)\|^{2}-\frac{1}{2}\frac{{\rm d}}{{\rm d}t}\int_{\mathbb{R}}gf^\prime V_{xtt}^{2}{\rm d}x+C(1+t)^{-6}\|V_{xx}(t)\|^{2}+C\delta (1+t)^{-\frac{13}{2}}\\&
    ~~~~+C(1+t)^{-4}\|V_{xxt}(t)\|^{2}+C(1+t)^{-2}\|V_{xtt}(t)\|^{2}+C(1+t)^{-5}\|V_{xt}(t)\|^{2}.
    \end{split}
\end{equation}
Substituting \eqref{3.95}-\eqref{3.97} into \eqref{3.94}, we have
\begin{equation}\label{3.98}
\begin{split}
   &\frac{1}{2}\frac{{\rm d}}{{\rm d}t}\int_{\mathbb{R}}\left[V_{ttt}^{2}+\left(gf^\prime-p^\prime(\bar{v})\right)V_{xtt}^{2}\right]{\rm d}x+\frac{3}{4}\int_{\mathbb{R}}V_{ttt}^{2}{\rm d}x \\
    \leq& \frac{1}{2}\frac{{\rm d}}{{\rm d}t}\int_{\mathbb{R}}\left[p^\prime(V_{x}+\bar{v}+\hat{v})-p^\prime(\bar{v})\right]V_{xtt}^{2}{\rm d}x +C(1+t)^{-1}\|V_{xtt}(t)\|^{2}+C\delta (1+t)^{-\frac{13}{2}}\\&
    ~~~~+C(1+t)^{-4}\|V_{xx}(t)\|^{2}+C(1+t)^{-5}\|V_{x}(t)\|^{2}+C(1+t)^{-3}\|V_{xt}(t)\|^{2}\\&
    ~~~~+C(1+t)^{-2}\|V_{xxt}(t)\|^{2}.
    \end{split}
\end{equation}
 Addition of $\lambda \cdot \eqref{3.93}$ to \eqref{3.98} $(0<\lambda\ll 1)$, one has that
  \begin{equation}\label{3.99}
   \begin{split}
   &\frac{1}{2}\frac{{\rm d}}{{\rm d}t}\int_{\mathbb{R}} \left[V_{ttt}^{2}+\lambda V_{tt}^{2}+2\lambda V_{tt}V_{ttt}+\left(gf^\prime-p^\prime(\bar{v})\right)V_{xtt}^{2}\right]{\rm d}x+\frac{1}{2}\int_{\mathbb{R}} \left(V_{ttt}^{2}-\lambda p^\prime(\bar{v})V_{xtt}^{2}\right){\rm d} x \\
   \leq& \frac{1}{2}\frac{{\rm d}}{{\rm d}t}\int_{\mathbb{R}}\left[p^\prime(V_{x}+\bar{v}+\hat{v})-p^\prime(\bar{v})\right]V_{xtt}^{2}{\rm d}x+ C\delta (1+t)^{-\frac{11}{2}}+C(1+t)^{-1}\|V_{tt}(t)\|_{1}^{2}\\&
   ~~~~+C(1+t)^{-2}\|V_{xt}(t)\|_{1}^{2}+C(1+t)^{-4}\|V_{x}(t)\|_{1}^{2}.
    \end{split}
    \end{equation}
Multiplying \eqref{3.99} by $(1+t)^{k}, ~ k=0,1,2,3,4,$ integrating over $(0,t)$, and using Lemma \ref{L3.1}-Lemma \ref{L3.6}, we can immediately obtain
\begin{equation}\label{3.100}
 \begin{split}
   (1+t)^{4}\|V_{tt}(t)\|_{1}^{2}&+(1+t)^{4}\|V_{ttt}(t)\|^{2}+\int_{0}^{t}(1+\tau)^{4}(\|V_{xtt}(\tau)\|^{2}+\|V_{ttt}(\tau)\|^{2}){\rm d}\tau\\
   \leq&  C\left(\|V_{0}\|^{2}_{3}+\|z_{0}\|_{2}^{2}+ \delta \right) .
   \end{split}
  \end{equation}
  Integrating $(1+t)^{5} \cdot \eqref{3.98}$ over $[0,t]$ yields
  \begin{equation}\label{3.101}
   (1+t)^{5}(\|V_{xtt}(t)\|^{2}+\|V_{ttt}(t)\|^{2})+\int_{0}^{t}(1+\tau)^{5}\|V_{ttt}(\tau)\|^{2}{\rm d}\tau\leq  C\left(\|V_{0}\|^{2}_{3}+\|z_{0}\|_{2}^{2}+ \delta \right).
  \end{equation}
  \end{proof}
  Combining the above two equations, one can immediately obtain \eqref{3.89}. The proof of Lemma \ref{L3.7} is completed. 
  
  From Lemmas \ref{L3.2}-Lemma \ref{L3.7}, one can easily verify that {\it a priori }assumption \eqref{3.1} is closed. Thus we have completed the proof of Proposition \ref{p1}, and obtain \eqref{1.26}-\eqref{1.27a}.

\subsection{Proof of \eqref{1.28}-\eqref{1.29a}}\label{S3.2}

Once we have obtained \eqref{1.26}-\eqref{1.27a}, we now want to give the improved decay estimates \eqref{1.28}-\eqref{1.29a}.  As pointed out in the intrudction, our analyses are quite different from \cite{Nishihara-Wang-Yang2000}.

Firstly, one can rewrite \eqref{1.23} as
\begin{equation}\label{3.102}
 \left\{\begin{array}{l}
V_{t}+p^\prime(v_{+})V_{xx}=-V_{tt}+F_{1}+F_{2}+\left[(p^\prime(v_{+})-p^\prime(\bar{v}))V_{x}\right]_{x},\\[2mm]
(V,V_{t})|_{t=0}=(V_0,z_0)(x).
 \end{array}
        \right.
\end{equation}
It is easy to see that $V(x,t)$ has the following integral representation
\begin{align}\label{3.103}
V(x,t)=&\int_{\mathbb{R}}G\left(x-y, t\right) V_{0}\left(y\right) {\rm d}y-\int_{0}^{t} \int_{\mathbb{R}}G(x-y,t-\tau) V_{\tau \tau}(y,\tau) {\rm d}y {\rm d}\tau \nonumber\\
&+\int_{0}^{t} \int_{\mathbb{R}}G(x-y,t-\tau)(F_{1}+F_{2})(y,\tau) {\rm d}y {\rm d}\tau	\nonumber\\
&+\int_{0}^{t} \int_{\mathbb{R}}G(x-y,t-\tau)\left[(p^\prime(v_{+})-p^\prime(\bar{v}))V_{y}\right]_{y}(y,\tau) {\rm d}y {\rm d}\tau,
\end{align}
where
\begin{equation}\notag
G(x,t)=\frac{1}{\sqrt{-4\pi p^\prime(v_{+})t}}\exp\left\{\frac{x^{2}}{4p^\prime(v_{+})t}\right\}.	
\end{equation}
By integration by parts with respect to $\tau$ as in \cite{Geng-Wang2016, Nishihara1997, Zhao2000},
\begin{align}\label{3.104}
&-\int_{0}^{\frac{t}{2}} \int_{\mathbb{R}}G(x-y,t-\tau) V_{\tau \tau}(y,\tau) {\rm d}y {\rm d}\tau \nonumber\\
=&-\int_{\mathbb{R}}G(x-y,t-\tau) V_{\tau}(y,\tau) {\rm d}y\big|_{\tau=0}^{\tau=\frac{t}{2}}-\int_{0}^{\frac{t}{2}} \int_{\mathbb{R}}G_{t}(x-y,t-\tau) V_{\tau}(y,\tau) {\rm d}y {\rm d}\tau \nonumber\\
=&\int_{\mathbb{R}}G\left(x-y,t\right) z_{0}(y){\rm d}y-\int_{\mathbb{R}}G\left(x-y,\frac{t}{2}\right)V_{t}\left(y,\frac{t}{2}\right){\rm d}y\nonumber\\
&-\int_{0}^{\frac{t}{2}} \int_{\mathbb{R}}G_{t}(x-y,t-\tau) V_{\tau}(y,\tau) {\rm d}y {\rm d}\tau.
\end{align}
Hence, \eqref{3.103} can be rewritten as
\begin{align}\label{3.105}
V(x,t)=&\int_{\mathbb{R}}G\left(x-y, t\right) (V_{0}+z_{0})\left(y\right) {\rm d}y-\int_{\mathbb{R}}G\left(x-y, \frac{t}{2}\right) V_{t}\left(y, \frac{t}{2}\right) {\rm d}y \nonumber\\
 &-\int_{0}^{\frac{t}{2}} \int_{\mathbb{R}}G_{t}(x-y,t-\tau) V_{\tau}(y,\tau) {\rm d}y {\rm d}\tau-\int_{\frac{t}{2}}^{t} \int_{\mathbb{R}}G(x-y,t-\tau) V_{\tau \tau}(y,\tau) {\rm d}y {\rm d}\tau \nonumber\\
 &+\int_{0}^{t} \int_{\mathbb{R}}G(x-y,t-\tau)F_{1}(y,\tau) {\rm d}y {\rm d}\tau\nonumber\\
 &+\int_{0}^{t} \int_{\mathbb{R}}G(x-y,t-\tau)\left[(p^\prime(v_{+})-p^\prime(\bar{v}))V_{y}\right]_{y}(y,\tau) {\rm d}y {\rm d}\tau\nonumber\\
 &+\int_{\frac{t}{2}}^{t} \int_{\mathbb{R}}G(x-y,t-\tau)F_{2}(y,\tau) {\rm d}y {\rm d}\tau+\int_{0}^{\frac{t}{2}} \int_{\mathbb{R}}G(x-y,t-\tau)F_{2}(y,\tau) {\rm d}y {\rm d}\tau\nonumber\\
 :=&\sum_{i=1}^{8} J_{i}(x,t).		
\end{align}
Having obtained Proposition \ref{p1} and \eqref{3.105}, now we can first deduce the following lemma.
\begin{lemma}\label{L3.8}
Under the assumptions of Theorem \ref{Thm1}, it holds that
\begin{equation}\label{3.106}
\|\partial_{x}^{k}V(t)\| \leq C(1+t)^{-\frac{1}{4}-\frac{k}{2}},\quad 0\leq k\leq 1.		
	\end{equation}	
\end{lemma}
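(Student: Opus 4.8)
The plan is to exploit the integral representation \eqref{3.105}, $V=\sum_{i=1}^{8}J_i$, together with the standard heat--kernel bounds $\|\partial_x^k G(\cdot,t)\|_{L^p}\le C t^{-k/2-\frac12(1-1/p)}$ (and, since $G_t=-p'(v_+)G_{xx}$, also $\|\partial_x^kG_t(\cdot,t)\|_{L^1}\le Ct^{-1-k/2}$), the $L^2$--decay rates \eqref{1.26}--\eqref{1.27a} already furnished by Proposition \ref{p1}, and the extra hypothesis $(V_0+z_0)\in L^1$ (recall $\alpha=1$). For $t$ in any bounded interval \eqref{3.106} is immediate from \eqref{1.26}, so I may assume $t\ge1$ and split every time integral at $\tau=t/2$.

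The terms $J_1,\dots,J_5,J_7$ are dispatched by direct convolution estimates. For $J_1$, Young's inequality with $(V_0+z_0)\in L^1\cap H^3$ gives $\|\partial_x^kJ_1(t)\|\le C(1+t)^{-1/4-k/2}$. For $J_2$ one uses $\|V_t(t/2)\|=\|z(t/2)\|\le C(1+t)^{-1}$; for $J_3$, on $[0,t/2]$ one has $t-\tau\ge t/2$, hence $\|G_t(t-\tau)\|_{L^1}\le Ct^{-1}$, and with $\|V_\tau\|\le C(1+\tau)^{-1}$ this yields $\|J_3(t)\|\le C(1+t)^{-1}\ln(1+t)$; for $J_4$ the bound $\|V_{\tau\tau}\|=\|z_\tau\|\le C(1+\tau)^{-2}$ is integrable on $[t/2,t]$. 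Each of these decays at least as fast as $(1+t)^{-1/4-k/2}$. For $J_5$ I would write $F_1=\partial_x\tilde F_1$ with $\tilde F_1=p'(\bar v)\bar v_t-\bigl(p(V_x+\bar v+\hat v)-p(\bar v)-p'(\bar v)V_x\bigr)$, integrate by parts, and use Lemma \ref{L2.1}, \eqref{3.2} and Taylor's expansion to get $\|\tilde F_1(\tau)\|\le C\delta(1+\tau)^{-3/4}+Ce^{-c\tau}$; then $\int_0^t(t-\tau)^{-1/2}(1+\tau)^{-3/4}\,d\tau\le C(1+t)^{-1/4}$, and for the $x$--derivative one places both derivatives on $G$ on $[0,t/2]$ and one derivative on $\tilde F_1$ on $[t/2,t]$. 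For $J_7$ one uses $\|F_2(\tau)\|\le C(\varepsilon+\delta)(1+\tau)^{-5/4}+Ce^{-c\tau}$, which follows from the pointwise bounds of Lemma \ref{L3.1} and the rates \eqref{1.26}--\eqref{1.27a}; on $[t/2,t]$ this gives $\|\partial_x^kJ_7(t)\|\le C(1+t)^{-1/4-k/2}$.

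The two delicate pieces are $J_6$ and $J_8$. In $J_8=\int_0^{t/2}G_x(t-\tau)\ast (gf)(\tau)\,d\tau$ one needs $F_2=(gf)_x\in L^1$, i.e. control of $\|V_{xt}\|_{L^1}$ and $\|V_{xx}\|_{L^1}$; these are not available from the $L^2$--framework, and I would obtain them from a weighted $L^2$ estimate (via $\|w\|_{L^1}\le C\|w\|^{1/2}\|(1+|x|)w\|^{1/2}$), i.e. a weighted energy estimate in the spirit of \cite{Nishikawa1998, Geng-Zhang2015}. In $J_6$, after one integration by parts the integrand is $(p'(v_+)-p'(\bar v))V_x$; the coefficient is only $O(\delta)$ in $L^\infty$ and — crucially — does not decay as $x\to-\infty$, so no extra time decay can be extracted from it. I expect this to be the main obstacle. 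The remedy is to run the argument simultaneously: set
\begin{equation}\notag
M(t):=\sup_{0\le\tau\le t}\Bigl[(1+\tau)^{\frac14}\|V(\tau)\|+(1+\tau)^{\frac34}\|V_x(\tau)\|\Bigr],
\end{equation}
carry along the improved rate $\|V_{xx}(\tau)\|\le C(1+\tau)^{-5/4}$ (re--derived from the energy identities of Section \ref{S3.1} with time weights tuned to the improved lower--order rates, the term $\int gf'V_{xx}^2$ being absorbed through \eqref{1.25a} exactly as there), and observe $\|J_6(t)\|\le C\delta\int_0^t(t-\tau)^{-1/2}\|V_x(\tau)\|\,d\tau\le C\delta M(t)(1+t)^{-1/4}$ and, after splitting at $t/2$ and moving a derivative onto the coefficient on $[t/2,t]$, $\|\partial_xJ_6(t)\|\le C\delta\bigl(1+M(t)\bigr)(1+t)^{-3/4}$.

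Collecting all $J_i$ (for $V$ and for $V_x$) then gives $M(t)\le C\bigl(\|V_0\|_3+\|z_0\|_2+\|V_0+z_0\|_{L^1}\bigr)+C(\varepsilon+\delta)M(t)$; since $V\in C([0,t];H^3)$ the quantity $M(t)$ is finite for each $t$, so choosing $\varepsilon,\delta$ small absorbs the last term and yields $M(t)\le C$, which is \eqref{3.106}. The step I expect to be hardest — and the reason the estimate cannot be carried out term by term in a purely sequential way — is the closing of $J_6$ (and the $L^1$--control needed for $J_8$): because the frozen--coefficient error does not localize in $x$, bounding $\partial_xJ_6$ with only the $L^2$--framework rate $\|V_{xx}\|\le C(1+t)^{-1}$ would give merely $(1+t)^{-1/2}$, so the improved second--order rate must be transported along in the bootstrap and justified in parallel by a weighted energy estimate.
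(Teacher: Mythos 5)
Your overall strategy diverges from the paper's at the crucial second step, and the $J_8$ treatment as written has a genuine gap.

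The paper uses the Green-function representation \eqref{3.105} only to obtain the single bound $\|V(t)\|\le C(1+\delta M(t))(1+t)^{-1/4}$ (eq.\ \eqref{3.122}); it never estimates $\|\partial_x J_i\|$. The $k=1$ case (and a clean re-derivation of $k=0$) is then obtained by going back to the time-weighted energy identities of Section \ref{S3.1} — integrating $(1+t)^{\epsilon_0+1/2}\times\eqref{3.125}$ and $(1+t)^{\epsilon_0+3/2}\times\eqref{3.19}$ for a fixed $0<\epsilon_0<1/2$ — with \eqref{3.122} fed into the $\int F_2V$ estimate to upgrade its decay. This yields $\sum_{k=0}^{1}(1+t)^{1/2+k}\|\partial_x^kV\|^2\le C+C\delta^2M^2(t)$, which closes $M(t)$. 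The advantages you yourself anticipate — no need for $\partial_xJ_6$, no need for an improved $\|V_{xx}\|$ rate transported through the bootstrap, no need for $L^1$ control of $V_{xt},V_{xx}$ — are obtained precisely by not estimating $\partial_xJ_i$ at all. Your plan of running a spatially-weighted $L^1$-type estimate in parallel is not what the cited references \cite{Nishikawa1998,Geng-Zhang2015} or this paper do (those weights are in $t$, not $x$), and it would be substantial extra machinery.

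More concretely, your handling of $J_8$ would not go through as stated. Writing $J_8=\int_0^{t/2}G_x(t-\tau)\ast(gf)(\tau)\,{\rm d}\tau$ presumes $gf(\tau)\in L^1$ (or at least $L^2$), but it is not: since $u=V_t+\bar u+\hat u$ and $\hat u(x,t)\to u_\pm\,{\rm e}^{-t}$ as $x\to\pm\infty$, one has $g(u)f(v)\to g(u_\pm{\rm e}^{-t})f(v_\pm)\ne0$ at spatial infinity, so $gf$ has no spatial decay. Moreover, requiring $(gf)_x\in L^1$ via $\|V_{xt}\|_{L^1},\|V_{xx}\|_{L^1}$ misses where the real obstruction lies: after expanding $F_2=g'f(V_{y\tau}-p(\bar v)_{yy}+\hat v_\tau)+gf'(V_{yy}+\bar v_y+\hat v_y)$, the $V_{y\tau}$ and $V_{yy}$ pieces are handled by moving one $\partial_y$ onto $G$ and using $L^1*L^2$ or $L^2*L^1$ Young bounds (eqs.\ \eqref{3.117}, \eqref{3.119}) — no $L^1$ control of second derivatives of $V$ is ever needed — while the genuinely delicate piece is the $\bar u_y$ term, which the paper treats by a double integration by parts combined with the structural hypothesis $g(0)=g'(0)=0$: the combination $\frac12g''(u)\bar u^2+g(u)-g'(u)\bar u$ is shown via Taylor expansion to be $O((1+t)^{-3/2})$ in $L^\infty$ (eqs.\ \eqref{3.119a}–\eqref{3.120}). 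This cancellation, which is the heart of the $J_8$ estimate, is absent from your plan.
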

\begin{proof}
Let's define
\begin{equation}\label{3.107}
M(t):= \sup \limits_{0 \leq s \leq t,~0 \leq k \leq 1}(1+s)^{\frac{1}{4}+\frac{k}{2}}\|\partial_{x}^{k}V(s)\|.		
	\end{equation}
Now we only need to show $M(t)$ is bounded.	Notice that
\begin{equation}\label{3.108}
\|\partial_{x}^{k}\partial_{t}^{l}G(t)\|_{L^{p}} \leq Ct^{-\frac{1}{2}(1-\frac{1}{p})-\frac{k}{2}-l},\qquad 1\leq p\leq \infty,~k,l \geq 0,	
\end{equation}
then by employing \eqref{2.6}-\eqref{2.8}, \eqref{3.1a}-\eqref{3.1b}, \eqref{3.105}, \eqref{3.108} and Hausdorff-Young's inequality, we have
\begin{equation}\label{3.109}
\left\|J_{1}(t)\right\|\leq \left\|G\left(t\right)\right\|\left\|(V_{0}+z_{0})\right\|_{L^{1}}\leq Ct^{-\frac{1}{4}},	
\end{equation}
\begin{equation}\label{3.110}
\|J_{2}(t)\|\leq \left\|G\left(\frac{t}{2}\right)\right\|_{L^{1}} \left\|V_{t}\left(\frac{t}{2}\right)\right\|\leq Ct^{-1},	
\end{equation}
\begin{align}\label{3.111}
\|J_{3}(t)\|& \leq \int_{0}^{\frac{t}{2}} \left\|G_{t}(t-\tau)\right\|_{L^{1}}\left\| V_{\tau}(\tau)\right\| {\rm d}\tau \nonumber\\
&\leq C\int_{0}^{\frac{t}{2}}(t-\tau)^{-1}(1+\tau)^{-1}{\rm d}\tau \leq Ct^{-1}\ln(1+t),	
\end{align}
\begin{equation}\label{3.112}
\|J_{4}(t)\|\leq C\int_{\frac{t}{2}}^{t} \left\|G(t-\tau)\right\|_{L^{1}}\left\|V_{\tau \tau}(\tau)\right\| {\rm d}\tau \leq C\int_{\frac{t}{2}}^{t}(1+\tau)^{-2}{\rm d}\tau \leq Ct^{-1},		
\end{equation}
\begin{equation}\label{3.113}
\begin{split}
\|J_{5}(t)\| &\leq C\int_{0}^{t}\left\|G_{x}(t-\tau)\right\|(\left\|V_{x}(\tau)\right\|^{2}+\|\bar{v}_{t}(\tau)\|_{L^{1}}+\|\hat{v}(\tau)\|_{L^{1}}){\rm d}\tau \\
&\leq C\int_{0}^{t}(t-\tau)^{-\frac{3}{4}}(1+\tau)^{-\frac{1}{2}}{\rm d}\tau\leq C\left(\int_{0}^{\frac{t}{2}}+\int_{\frac{t}{2}}^{t}\right)(t-\tau)^{-\frac{3}{4}}(1+\tau)^{-\frac{1}{2}}{\rm d}\tau\\
&\leq Ct^{-\frac{1}{4}},
\end{split}
\end{equation}
\begin{equation}\label{3.114}
\begin{split}
\|J_{6}(t)\| &\leq C\int_{0}^{t}\left\|G_{x}(t-\tau)\right\|_{L^{1}}\left\|[(p^\prime(v_{+})-p^\prime(\bar{v}))V_{y}](\tau)\right\|{\rm d}\tau \\
&\leq C\delta \int_{0}^{t}\left\|G_{x}(t-\tau)\right\|_{L^{1}}\left\|V_{y}(\tau)\right\|{\rm d}\tau\\
&\leq C\delta M(t)\int_{0}^{t}\left\|G_{x}(t-\tau)\right\|_{L^{1}}(1+\tau)^{-\frac{3}{4}}{\rm d}\tau  \\
&\leq C\delta M(t)\int_{0}^{t}(t-\tau)^{-\frac{1}{2}}(1+\tau)^{-\frac{3}{4}}{\rm d}\tau\\
&\leq C\delta M(t)\left(\int_{0}^{\frac{t}{2}}+\int_{\frac{t}{2}}^{t}\right)(t-\tau)^{-\frac{1}{2}}(1+\tau)^{-\frac{3}{4}}{\rm d}\tau\leq C\delta M(t)t^{-\frac{1}{4}},
\end{split}
\end{equation}
and
\begin{equation}\label{3.115}
\|J_{7}(t)\|\leq \int_{\frac{t}{2}}^{t}\left\|G(t-\tau)\right\|_{L^{1}}\left\|F_{2}(\tau)\right\|{\rm d}\tau \leq C\int_{\frac{t}{2}}^{t}(1+\tau)^{-\frac{5}{4}}{\rm d}\tau \leq Ct^{-\frac{1}{4}}.
\end{equation}
Now we turn to estimate $J_{8}$, notice that
\begin{align}\label{3.116}
J_{8}&=\int_{0}^{\frac{t}{2}}\int_{\mathbb{R}}G(x-y,t-\tau)F_{2}(y,\tau) {\rm d}y {\rm d}\tau	\nonumber\\
&=\int_{0}^{\frac{t}{2}}\int_{\mathbb{R}} G(x-y,t-\tau)[g^\prime f(V_{y\tau}+\bar{u}_{y}+\hat{v}_{\tau})+gf^\prime(V_{yy}+\bar{v}_{y}+\hat{v}_{y})](y,\tau) {\rm d}y {\rm d}\tau.	
\end{align}
Since $\hat{u}(x,t)$ doesn't belong to any $L^{p}$ space for $1 \leq p < \infty$, it means that $J_{8}$ is estimated quite differently from $J_{5}$. By employing \eqref{1.25b}-\eqref{1.25c}, \eqref{2.6}-\eqref{2.8}, \eqref{3.1a}-\eqref{3.1b}, \eqref{3.108} and Hausdorff-Young's inequality, we can proof
\begin{align}\label{3.117}
&\left\|\int_{0}^{\frac{t}{2}}\int_{\mathbb{R}}G(x-y,t-\tau)(g^\prime fV_{y\tau})(y,\tau) {\rm d}y {\rm d}\tau\right\| \nonumber\\
\leq& \left\|\int_{0}^{\frac{t}{2}}\int_{\mathbb{R}}G_{x}(x-y,t-\tau)(g^\prime fV_{\tau})(y,\tau) {\rm d}y {\rm d}\tau \right\|+\left\|\int_{0}^{\frac{t}{2}}\int_{\mathbb{R}}G(x-y,t-\tau)[V_{\tau}(g^{\prime } f)_{y}](y,\tau) {\rm d}y {\rm d}\tau\right\| \nonumber\\
\leq&C\int_{0}^{\frac{t}{2}}\left(\left\|G_{x}(t-\tau)\right\|_{L^{1}}\left\|(g^\prime fV_{\tau})(\tau)\right\|+\left\|G(t-\tau)\right\|\|V_{\tau}(\tau)\|\|(g^{\prime } f)_{y}(\tau)\|\right){\rm d}\tau \nonumber\\
\leq& Ct^{-\frac{1}{2}}\int_{0}^{\frac{t}{2}}(1+\tau)^{-\frac{3}{2}}{\rm d}\tau +Ct^{-\frac{1}{4}}\int_{0}^{\frac{t}{2}}(1+\tau)^{-\frac{7}{4}}{\rm d}\tau\leq Ct^{-\frac{1}{4}},
\end{align}	
and
\begin{equation}\label{3.118}
\begin{split}
&\left\|\int_{0}^{\frac{t}{2}}\int_{\mathbb{R}}G(x-y,t-\tau)(g^\prime f\hat{v}_{\tau}+gf^\prime \hat{v}_{y})(y,\tau) {\rm d}y {\rm d}\tau \right\|\\
&~~~~~\leq \int_{0}^{\frac{t}{2}}\left\|G(t-\tau)\right\|\left\|(g^\prime f\hat{v}_{\tau}+gf^\prime \hat{v}_{y})(\tau)\right\|_{L^{1}} {\rm d}y {\rm d}\tau\\
&~~~~~\leq C\int_{0}^{\frac{t}{2}}(t-\tau)^{-\frac{1}{4}}{\rm e}^{-\tau}{\rm d}\tau \leq Ct^{-\frac{1}{4}}.
\end{split} 		
\end{equation}
In a fashion similar to \eqref{3.117}, without any difficulty, we can proof
\begin{equation}\label{3.119}
\left\|\int_{0}^{\frac{t}{2}}\int_{\mathbb{R}}G(x-y,t-\tau)(gf^\prime V_{yy})(y,\tau) {\rm d}y {\rm d}\tau \right\| \leq Ct^{-\frac{1}{4}}.
\end{equation}
Finally, notice that
\begin{equation}\label{3.119a}
 \begin{split}
 &\int_{0}^{\frac{t}{2}}\int_{\mathbb{R}}G(x-y,t-\tau)(g^\prime f\bar{u}_{y})(y,\tau) {\rm d}y {\rm d}\tau \\
 =&\int_{0}^{\frac{t}{2}}\int_{\mathbb{R}}G_{x}(x-y,t-\tau)(g^\prime f\bar{u})(y,\tau) {\rm d}y {\rm d}\tau-\int_{0}^{\frac{t}{2}}\int_{\mathbb{R}}G(x-y,t-\tau)(g^{\prime\prime} fu_{y}\bar{u})(y,\tau) {\rm d}y {\rm d}\tau \\
 &-\int_{0}^{\frac{t}{2}}\int_{\mathbb{R}}G(x-y,t-\tau)(g^{\prime}f^\prime v_{y}\bar{u})(y,\tau) {\rm d}y {\rm d}\tau\\
 =&\int_{0}^{\frac{t}{2}}\int_{\mathbb{R}}G_{x}(x-y,t-\tau)(g^\prime f\bar{u})(y,\tau){\rm d}y {\rm d}\tau\\
 &-\int_{0}^{\frac{t}{2}}\int_{\mathbb{R}}G(x-y,t-\tau)\left\{\bar{u}[(g^{\prime\prime} f(V_{y\tau}+\hat{v}_{\tau})+g^{\prime}f^\prime (V_{yy}+\hat{v}_{y})]\right\}(y,\tau) {\rm d}y {\rm d}\tau \\
 &-\int_{0}^{\frac{t}{2}}\int_{\mathbb{R}}G(x-y,t-\tau)(g^{\prime\prime} f\bar{u}_{y}\bar{u})(y,\tau) {\rm d}y {\rm d}\tau-\int_{0}^{\frac{t}{2}}\int_{\mathbb{R}}G(x-y,t-\tau)(g^{\prime}f^\prime \bar{v}_{y}\bar{u})(y,\tau) {\rm d}y {\rm d}\tau,
  \end{split}
\end{equation}
and
\begin{equation}\label{3.119b}
  \begin{split}
 &-\int_{0}^{\frac{t}{2}}\int_{\mathbb{R}}G(x-y,t-\tau)(g^{\prime\prime} f\bar{u}_{y}\bar{u})(y,\tau) {\rm d}y {\rm d}\tau\\
 =&-\frac{1}{2}\int_{0}^{\frac{t}{2}}\int_{\mathbb{R}}G(x-y,t-\tau)[g^{\prime\prime} f(\bar{u}^{2})_{y}](y,\tau) {\rm d}y {\rm d}\tau\\
 =&-\frac{1}{2}\int_{0}^{\frac{t}{2}}\int_{\mathbb{R}}G_{x}(x-y,t-\tau)(g^{\prime\prime} f\bar{u}^{2})(y,\tau) {\rm d}y {\rm d}\tau+\frac{1}{2}\int_{0}^{\frac{t}{2}}\int_{\mathbb{R}}G(x-y,t-\tau)(g^{\prime\prime\prime} f\bar{u}^{2}u_{y})(y,\tau) {\rm d}y {\rm d}\tau\\
 &+\frac{1}{2}\int_{0}^{\frac{t}{2}}\int_{\mathbb{R}}G(x-y,t-\tau)[g^{\prime\prime}f^\prime\bar{u}^{2}(V_{yy}+\hat{v}_{y})](y,\tau) {\rm d}y {\rm d}\tau\\
 &+\frac{1}{2}\int_{0}^{\frac{t}{2}}\int_{\mathbb{R}}G(x-y,t-\tau)(g^{\prime\prime}f^\prime\bar{u}^{2}\bar{v}_{y})(y,\tau) {\rm d}y {\rm d}\tau.
  \end{split}
\end{equation}
Then it follows that
\begin{equation}\label{3.119c}
 \begin{split}
 &\int_{0}^{\frac{t}{2}}\int_{\mathbb{R}}G(x-y,t-\tau)(g^\prime f\bar{u}_{y}+gf^\prime \bar{v}_{y})(y,\tau) {\rm d}y {\rm d}\tau \\
 =&\int_{0}^{\frac{t}{2}}\int_{\mathbb{R}}G_{x}(x-y,t-\tau)\left(g^\prime f\bar{u}-\frac{1}{2}g^{\prime\prime} f\bar{u}^{2}\right)(y,\tau) {\rm d}y {\rm d}\tau \\
 &-\int_{0}^{\frac{t}{2}}\int_{\mathbb{R}}G(x-y,t-\tau)\left\{\bar{u}[(g^{\prime\prime} f(V_{y\tau}+\hat{v}_{\tau})+g^{\prime}f^\prime (V_{yy}+\hat{v}_{y})]\right\}(y,\tau) {\rm d}y {\rm d}\tau \\
 &+\frac{1}{2}\int_{0}^{\frac{t}{2}}\int_{\mathbb{R}}G(x-y,t-\tau)\{\bar{u}^{2}[g^{\prime\prime\prime} fu_{y}+g^{\prime\prime}f^\prime(V_{yy}+\hat{v}_{y})]\}(y,\tau) {\rm d}y {\rm d}\tau \\
 &+\int_{0}^{\frac{t}{2}}\int_{\mathbb{R}}G(x-y,t-\tau)\left[\left(\frac{1}{2}g^{\prime\prime}\bar{u}^{2}+g-g^{\prime}\bar{u}\right)f^\prime \bar{v}_{y}\right](y,\tau) {\rm d}y {\rm d}\tau\\
 :=&K_{1}(x,t)+K_{2}(x,t)+K_{3}(x,t)+K_{4}(x,t).
  \end{split}
\end{equation}
By employing $\eqref{1.6}_{2}$, \eqref{1.25c}, \eqref{2.6}-\eqref{2.7}, \eqref{3.1a}-\eqref{3.1b} and \eqref{3.108}, we can obtain
\begin{equation}\label{3.119d}
  \left\|K_{1}(t)\right\|\leq C\int_{0}^{\frac{t}{2}}\left\|G_{x}(t-\tau)\right\|_{L^{1}}(\left\|(u\bar{u})(\tau)\right\|+\left\|\bar{u}^{2}(\tau)\right\|){\rm d}\tau\\
  \leq C\int_{0}^{\frac{t}{2}}(t-\tau)^{-\frac{1}{2}}(1+\tau)^{-\frac{3}{4}}{\rm d}\tau\leq Ct^{-\frac{1}{4}},
\end{equation}
and
\begin{equation}\label{3.119e}
 \begin{split}
 &\left\|K_{2}(t)\right\|+\left\|K_{3}(t)\right\| \\
 \leq&C\int_{0}^{\frac{t}{2}}\left\|G(t-\tau)\right\|(1+\tau)^{-\frac{3}{2}}\left\|\bar{u}(\tau)\right\|_{L^{1}}{\rm d}\tau\leq C\int_{0}^{\frac{t}{2}}(t-\tau)^{-\frac{1}{4}}(1+\tau)^{-\frac{3}{2}}{\rm d}\tau\leq Ct^{-\frac{1}{4}}.
  \end{split}
\end{equation}
As for $K_{4}(x,t)$, firstly, it follows from Taylor's expansion that
\begin{equation}\notag
 g(u)=\frac{1}{2}g^{\prime\prime}(\theta_{1}u)u^{2},~~~ g^\prime(u)=g^{\prime\prime}(\theta_{2}u)u,
\end{equation}
for $0<\theta_{1}, \theta_{2}<1$. Noticing that $u=V_{t}+\bar{u}+\hat{u}$, it is easy to obtain that
\begin{equation}\label{3.119g}
\begin{split}
 &\left\|\left(\frac{1}{2}g^{\prime\prime}(u)\bar{u}^{2}+g(u)-g^{\prime}(u)\bar{u}\right)(t)\right\|_{L^{\infty}}\\
 =&\left\|\left(\frac{1}{2}g^{\prime\prime}(u)\bar{u}^{2}+\frac{1}{2}g^{\prime\prime}(\theta_{1}u)u^{2}-g^{\prime\prime}(\theta_{2}u)u\bar{u}\right)(t)\right\|_{L^{\infty}}\leq C(1+t)^{-\frac{3}{2}},
 \end{split}
\end{equation}
thus, one can immediately obtain
\begin{equation}\label{3.120}
 \left\|K_{4}(t)\right\|\leq C\int_{0}^{\frac{t}{2}}\left\|G(t-\tau)\right\|\left\|\bar{v}_{y}(\tau)\right\|_{L^{1}}(1+\tau)^{-\frac{3}{2}}{\rm d}\tau \leq C\int_{0}^{\frac{t}{2}}(t-\tau)^{-\frac{1}{4}}(1+\tau)^{-\frac{3}{2}}{\rm d}\tau\leq Ct^{-\frac{1}{4}}.
\end{equation}
Then, from \eqref{3.117}-\eqref{3.119}, \eqref{3.119c}-\eqref{3.119e} and \eqref{3.120}, we get
\begin{equation}\label{3.121}
\|J_{8}(t)\| \leq Ct^{-\frac{1}{4}}.	
\end{equation}
Consequently,
\begin{equation}\label{3.122}
\|V(t)\| \leq \sum_{i=1}^{8}\|J_{i}(t)\|\leq C(1+\delta M(t))(1+t)^{-\frac{1}{4}}.	
\end{equation}

With the above preparations in hand, we now turn to prove \eqref{3.106}.  Firstly, by combintion of \eqref{3.8}-\eqref{3.11} and \eqref{3.122}, it is easy to check that
\begin{equation}\label{3.123}
\int_{\mathbb{R}}F_{2}V{\rm d} x\leq C\|V_{t}(t)\|^{2}+C{\rm e}^{-t}\|V(t)\|_{2}^{2}+ C\delta (1+t)^{-\frac{3}{2}}+C\delta^{2}M^{2}(t)(1+t)^{-\frac{3}{2}}.	
\end{equation}
Combining \eqref{3.5}-\eqref{3.6} with \eqref{3.123}, we have
\begin{equation}\label{3.124}
\begin{split}
 &\frac{{\rm d}}{{\rm d}t}\int_{\mathbb{R}} \left(\frac{V^{2}}{2}+VV_{t}\right){\rm d} x-\frac{3}{4}\int_{\mathbb{R}} p^\prime(\bar{v})V_{x}^{2}{\rm d}x\\
 & \leq C\|V_{t}(t)\|^{2}+C{\rm e}^{-t}\|V(t)\|_{2}^{2}+C\delta (1+t)^{-\frac{3}{2}}+C\delta^{2}M^{2}(t)(1+t)^{-\frac{3}{2}}.
 \end{split}
\end{equation}
Addition of $\lambda \cdot \eqref{3.124}$, $0<\lambda \ll 1$ to \eqref{3.19} yields
  \begin{align}\label{3.125}
    &\frac{1}{2}\frac{{\rm d}}{{\rm d}t}\int_{\mathbb{R}} \left(V_{t}^{2}+\lambda V^{2}+2\lambda VV_{t}-p^\prime(\bar{v})V_{x}^{2}\right){\rm d} x+\frac{1}{2}\int_{\mathbb{R}}(V_{t}^{2}-\lambda p^\prime(\bar{v})V_{x}^{2}){\rm d}x\nonumber\\
     \leq& C{\rm e}^{-t}\|V(t)\|_{2}^{2}+\frac{{\rm d}}{{\rm d}t}\int_{\mathbb{R}}\left[\int_{\bar{v}}^{V_{x}+\bar{v}+\hat{v}}p(s){\rm d}s-p(\bar{v})V_{x}-\frac{p^\prime(\bar{v})}{2}V_{x}^{2}\right]{\rm d}x\nonumber\\
     &+C\delta (1+t)^{-\frac{3}{2}}+C\delta^{2}M^{2}(t)(1+t)^{-\frac{3}{2}}.
  \end{align}
Next, integrating $(1+t)^{\epsilon_{0}+\frac{1}{2}}\times \eqref{3.125}$ over $(0, t)$ for any fixed $0<\epsilon_{0}<\frac{1}{2}$, we obtain
\begin{align}\notag
  &\frac{1}{2}(1+t)^{\epsilon_{0}+\frac{1}{2}}\int_{\mathbb{R}} \left(V_{t}^{2}+\lambda V^{2}+2\lambda VV_{t}-p^\prime(\bar{v})V_{x}^{2}\right){\rm d} x\nonumber\\
  &+\frac{1}{2}\int_{0}^{t}\int_{\mathbb{R}} (1+\tau)^{\epsilon_{0}+\frac{1}{2}}\left(V_{t}^{2}-\lambda p^\prime(\bar{v}) V_{x}^{2} \right){\rm d} x{\rm d}\tau \nonumber\\
  \leq&C\int_{0}^{t}(1+\tau)^{\epsilon_{0}-\frac{1}{2}}(\|V(\tau)\|_{1}^{2}+\|V_{t}(\tau)\|^{2}){\rm d}\tau+C(\varepsilon +\delta)(1+t)^{\epsilon_{0}+\frac{1}{2}}\|V_{x}(t)\|^{2}\nonumber\\
  &+C\int_{0}^{t}(1+\tau)^{\epsilon_{0}+\frac{1}{2}}{\rm e}^{-\tau}\|V(\tau)\|_{2}^{2}{\rm d}\tau +C(1+\delta^{2} M^{2}(t))(1+t)^{\epsilon_{0}}.	
  \end{align}
  By using \eqref{3.1a}-\eqref{3.1b} and \eqref{3.122}, it is easy to obtain that
  \begin{align}\notag
  &\int_{0}^{t}(1+\tau)^{\epsilon_{0}-\frac{1}{2}}(\|V(\tau)\|_{1}^{2}+\|V_{t}(\tau)\|^{2}){\rm d}\tau \nonumber\\
  &\leq C(1+\delta^{2}M^{2}(t))\int_{0}^{t}(1+\tau)^{\epsilon_{0}-1}{\rm d}\tau+C\int_{0}^{t}(1+\tau)^{\epsilon_{0}-\frac{1}{2}}[(1+\tau)^{-1}+(1+\tau)^{-2}]{\rm d}\tau\nonumber\\
  &\leq C(1+\delta^{2}M^{2}(t))(1+t)^{\epsilon_{0}}+C\leq C(1+\delta^{2}M^{2}(t))(1+t)^{\epsilon_{0}},\nonumber
  \end{align}
  and
  \begin{equation}\notag
  \int_{0}^{t}(1+\tau)^{\epsilon_{0}+\frac{1}{2}}{\rm e}^{-\tau}\|V(\tau)\|_{2}^{2}{\rm d}\tau \leq \sup \limits_{0 \leq \tau \leq t}\|V(\tau)\|_{2}^{2}\int_{0}^{t}(1+\tau)^{\epsilon_{0}+\frac{1}{2}}{\rm e}^{-\tau}{\rm d}\tau\leq C.	
  \end{equation}
  Notice that $\varepsilon$ and $\delta$ are small enough, then it follows that
  \begin{align}\label{3.126}
(1+t)^{\epsilon_{0}+\frac{1}{2}}&(\|V(t)\|_{1}^{2}+\|V_{t}(t)\|^{2})+\int_{0}^{t}(1+\tau)^{\epsilon_{0}+\frac{1}{2}}\left(\|V_{x}(\tau)\|^{2}+\|V_{t}(\tau)\|^{2}\right){\rm d}\tau\nonumber\\
& \leq C(1+\delta^{2}M^{2}(t))(1+t)^{\epsilon_{0}}.	
  \end{align}
 Then, the integration of $(1+t)^{\epsilon_{0}+\frac{3}{2}}\times \eqref{3.19}$ over $(0, t)$, we obtain
  \begin{align}\notag
  &\frac{1}{2}(1+t)^{\epsilon_{0}+\frac{3}{2}}\int_{\mathbb{R}}\left(V_{t}^{2}-p^\prime(\bar{v})V_{x}^{2}\right){\rm d} x+\frac{3}{4}\int_{0}^{t}\int_{\mathbb{R}}(1+\tau)^{\epsilon_{0}+\frac{3}{2}} V_{t}^{2}{\rm d} x{\rm d}\tau\nonumber\\
  \leq &C\int_{0}^{t}(1+\tau)^{\epsilon_{0}+\frac{1}{2}}(\|V_{x}(\tau)\|^{2}+\|V_{t}(\tau)\|^{2}){\rm d}\tau+C(\varepsilon +\delta)(1+t)^{\epsilon_{0}+\frac{3}{2}}\|V_{x}(t)\|^{2}\nonumber\\
  &+C\int_{0}^{t}(1+\tau)^{\epsilon_{0}+\frac{3}{2}}{\rm e}^{-\tau}\|V_{xx}(\tau)\|^{2}{\rm d}\tau+C(1+t)^{\epsilon_{0}}.\nonumber
  \end{align}
  By using \eqref{3.1a}-\eqref{3.1b} and \eqref{3.126}, it is easy to obtain that
  \begin{equation}\label{3.127}
  (1+t)^{\epsilon_{0}+\frac{3}{2}}(\|V_{x}(t)\|^{2}+\|V_{t}(t)\|^{2})+\int_{0}^{t}(1+\tau)^{\epsilon_{0}+\frac{3}{2}}\|V_{t}(\tau)\|^{2}{\rm d}\tau\leq C(1+\delta^{2}M^{2}(t))(1+t)^{\epsilon_{0}}.
  \end{equation}
It follows from \eqref{3.126} and \eqref{3.127} that
\begin{equation}\notag
\sum \limits_{k=0}^{1}(1+t)^{\frac{1}{2}+k}\|\partial_{x}^{k}V(t)\|^{2} \leq C+C\delta^{2}M^{2}(t).
\end{equation}
Thus, one can immediately obtain
\begin{equation}\notag
M^{2}(t)\leq C+C\delta^{2}M^{2}(t).	
\end{equation}
Since $\delta$ is sufficiently small, we have
\begin{equation}\notag
M^{2}(t)\leq C,	
\end{equation}
which implies \eqref{3.106}. Now we have completed the proof of Lemma \ref{L3.8}.

Having obtained Lemma \ref{L3.8}, combining Proposition \ref{p1}, now we turu to proof the following lemma.
\begin{lemma}\label{L3.9}
 Under the assumptions in Theorem \ref{Thm1}, then we have $V(x, t)$ satisfies the following decay estimates:
  \begin{align}
&\label{3.128}\|\partial_{x}^{k}\partial_{t}^{l}V(t)\| \leq C(1+t)^{-\frac{1}{4}-\frac{k}{2}-l},\quad 0\leq k+l \leq 3,~0\leq l \leq 2,\\
&\label{3.129}\|\partial_{t}^{3}V(t)\|\leq C(1+t)^{-\frac{11}{4}}.	
\end{align}  	
       \end{lemma}
 \begin{proof}
  Firstly, from Lemma \ref{L3.8}, one can easily verify that
  \begin{align}\label{3.130}
 (1+t)^{\epsilon_{0}+\frac{1}{2}}(\|V(t)\|_{1}^{2}+\|V_{t}(t)\|^{2})+\int_{0}^{t}(1+\tau)^{\epsilon_{0}+\frac{1}{2}}\left(\|V_{x}(\tau)\|^{2}+\|V_{t}(\tau)\|^{2}\right){\rm d}\tau \leq C(1+t)^{\epsilon_{0}},	
  \end{align}
 and
  \begin{equation}\label{3.131}
  (1+t)^{\epsilon_{0}+\frac{3}{2}}(\|V_{x}(t)\|^{2}+\|V_{t}(t)\|^{2})+\int_{0}^{t}(1+\tau)^{\epsilon_{0}+\frac{3}{2}}\|V_{t}(\tau)\|^{2}{\rm d}\tau\leq C(1+t)^{\epsilon_{0}},
  \end{equation}
for any fixed $0<\epsilon_{0}<\frac{1}{2}$.
  Next, integrating $(1+t)^{\epsilon_{0}+\frac{3}{2}}\times \eqref{3.31}$ over $(0, t)$, we can obtain
  \begin{align}\notag
  &\frac{1}{2}(1+t)^{\epsilon_{0}+\frac{3}{2}}\int_{\mathbb{R}} \left[V_{xt}^{2}+\lambda V_{x}^{2}+2\lambda V_{xt}V_{x}+\left(-p^\prime(\bar{v})+gf^\prime\right)V_{xx}^{2}\right]{\rm d} x\nonumber\\
  &+\frac{1}{2}\int_{0}^{t}\int_{\mathbb{R}}(1+\tau)^{\epsilon_{0}+\frac{3}{2}}\left(V_{xt}^{2}-\lambda p^\prime(\bar{v})V_{xx}^{2}\right){\rm d}x{\rm d}\tau \nonumber\\
   \leq& C\int_{0}^{t}(1+\tau)^{\epsilon_{0}+\frac{1}{2}}(\|V_{x}(\tau)\|_{1}^{2}+\|V_{xt}(\tau)\|^{2}){\rm d}\tau+C(\varepsilon +\delta)(1+t)^{\epsilon_{0}+\frac{3}{2}}\|V_{xx}(t)\|^{2}\nonumber\\
  &+C\int_{0}^{t}(1+\tau)^{\epsilon_{0}+\frac{3}{2}}{\rm e}^{-\tau}\|V_{xx}(\tau)\|^{2}{\rm d}\tau+C(1+t)^{\epsilon_{0}}.\nonumber
  \end{align}
  By using \eqref{3.1a}-\eqref{3.1b} and \eqref{3.130}, we have
  \begin{equation}\notag
  \int_{0}^{t}(1+\tau)^{\epsilon_{0}+\frac{1}{2}}(\|V_{x}(\tau)\|_{1}^{2}+\|V_{xt}(\tau)\|^{2}){\rm d}\tau \leq C(1+t)^{\epsilon_{0}}.	
  \end{equation}
  Then it follows that
\begin{equation}\label{3.132}
  (1+t)^{\epsilon_{0}+\frac{3}{2}}(\|V_{x}(t)\|_{1}^{2}+\|V_{xt}(t)\|^{2})+\int_{0}^{t}(1+\tau)^{\epsilon_{0}+\frac{3}{2}}(\|V_{xx}(\tau)\|^{2}+\|V_{xt}(\tau)\|^{2}){\rm d}\tau \leq C(1+t)^{\epsilon_{0}}.	
  \end{equation}
  Integrating $(1+t)^{\epsilon_{0}+\frac{5}{2}}\times \eqref{3.30}$ over $(0, t)$, we obtain
  \begin{align}\notag
  &\frac{1}{2}(1+t)^{\epsilon_{0}+\frac{5}{2}}\int_{\mathbb{R}}\left(V_{xt}^{2}+\left(-p^\prime(\bar{v})+gf^\prime\right)V_{xx}^{2}\right){\rm d} x+\frac{3}{4}\int_{0}^{t}\int_{\mathbb{R}}(1+\tau)^{\epsilon_{0}+\frac{5}{2}} V_{xt}^{2}{\rm d} x{\rm d}\tau\nonumber\\
  \leq &C\int_{0}^{t}(1+\tau)^{\epsilon_{0}+\frac{3}{2}}(\|V_{xx}(\tau)\|^{2}+\|V_{xt}(\tau)\|^{2}){\rm d}\tau+C\int_{0}^{t}(1+\tau)^{\epsilon_{0}+\frac{1}{2}}\|V_{x}(\tau)\|^{2}{\rm d}\tau\nonumber\\
  &+C(\varepsilon +\delta)(1+t)^{\epsilon_{0}+\frac{5}{2}}\|V_{xx}(t)\|^{2}+C\int_{0}^{t}(1+\tau)^{\epsilon_{0}+\frac{5}{2}}{\rm e}^{-\tau}\|V_{xx}(\tau)\|^{2}{\rm d}\tau+C(1+t)^{\epsilon_{0}}.\nonumber	
  \end{align}
  By using \eqref{3.1a}-\eqref{3.1b}, \eqref{3.130} and \eqref{3.132}, we can immediately obtain
\begin{equation}\label{3.133}
  (1+t)^{\epsilon_{0}+\frac{5}{2}}(\|V_{xx}(t)\|^{2}+\|V_{xt}(t)\|^{2})+\int_{0}^{t}(1+\tau)^{\epsilon_{0}+\frac{5}{2}}\|V_{xt}(\tau)\|^{2}{\rm d}\tau \leq C(1+t)^{\epsilon_{0}}.	
  \end{equation}
 In a similar fashion as above, integrating $(1+t)^{\epsilon_{0}+\frac{5}{2}}\times \eqref{3.56}$ and $(1+t)^{\epsilon_{0}+\frac{7}{2}}\times \eqref{3.55}$ over $(0, t)$, we obtain
 \begin{equation}\label{3.134}
  (1+t)^{\epsilon_{0}+\frac{5}{2}}(\|V_{xx}(t)\|_{1}^{2}+\|V_{xxt}(t)\|^{2})+\int_{0}^{t}(1+\tau)^{\epsilon_{0}+\frac{5}{2}}(\|V_{xxx}(\tau)\|^{2}+\|V_{xxt}(\tau)\|^{2}){\rm d}\tau \leq C(1+t)^{\epsilon_{0}},	
  \end{equation}
\begin{equation}\label{3.135}
  (1+t)^{\epsilon_{0}+\frac{7}{2}}(\|V_{xxx}(t)\|^{2}+\|V_{xxt}(t)\|^{2})+\int_{0}^{t}(1+\tau)^{\epsilon_{0}+\frac{7}{2}}\|V_{xxt}(\tau)\|^{2}{\rm d}\tau \leq C(1+t)^{\epsilon_{0}}.	
  \end{equation}
Integrating $(1+t)^{\epsilon_{0}+\frac{5}{2}}\times \eqref{3.77}$ and $(1+t)^{\epsilon_{0}+\frac{7}{2}}\times \eqref{3.76}$ over $(0, t)$, we obtain
\begin{equation}\label{3.136}
  (1+t)^{\epsilon_{0}+\frac{5}{2}}(\|V_{t}(t)\|_{1}^{2}+\|V_{tt}(t)\|^{2})+\int_{0}^{t}(1+\tau)^{\epsilon_{0}+\frac{5}{2}}(\|V_{xt}(\tau)\|^{2}+\|V_{tt}(\tau)\|^{2}){\rm d}\tau \leq C(1+t)^{\epsilon_{0}},	
  \end{equation}
\begin{equation}\label{3.137}
  (1+t)^{\epsilon_{0}+\frac{7}{2}}(\|V_{xt}(t)\|^{2}+\|V_{tt}(t)\|^{2})+\int_{0}^{t}(1+\tau)^{\epsilon_{0}+\frac{7}{2}}\|V_{tt}(\tau)\|^{2}{\rm d}\tau \leq C(1+t)^{\epsilon_{0}}.	
  \end{equation}
Integrating $(1+t)^{\epsilon_{0}+\frac{9}{2}}\times \eqref{3.88}$ over $(0, t)$, we obtain
\begin{equation}\label{3.138}
  (1+t)^{\epsilon_{0}+\frac{9}{2}}(\|V_{xxt}(t)\|^{2}+\|V_{xtt}(t)\|^{2})+\int_{0}^{t}(1+\tau)^{\epsilon_{0}+\frac{9}{2}}\|V_{xtt}(\tau)\|^{2}{\rm d}\tau \leq C(1+t)^{\epsilon_{0}}.	
  \end{equation}
  Integrating $(1+t)^{\epsilon_{0}+\frac{9}{2}}\times \eqref{3.99}$ and $(1+t)^{\epsilon_{0}+\frac{11}{2}}\times \eqref{3.98}$ over $(0, t)$, we obtain
  \begin{equation}\label{3.139}
  (1+t)^{\epsilon_{0}+\frac{9}{2}}(\|V_{tt}(t)\|_{1}^{2}+\|V_{ttt}(t)\|^{2})+\int_{0}^{t}(1+\tau)^{\epsilon_{0}+\frac{9}{2}}(\|V_{xtt}(\tau)\|^{2}+\|V_{ttt}(\tau)\|^{2}){\rm d}\tau \leq C(1+t)^{\epsilon_{0}},	
  \end{equation}
  and
\begin{equation}\label{3.140}
  (1+t)^{\epsilon_{0}+\frac{11}{2}}(\|V_{xtt}(t)\|^{2}+\|V_{ttt}(t)\|^{2})+\int_{0}^{t}(1+\tau)^{\epsilon_{0}+\frac{11}{2}}\|V_{ttt}(\tau)\|^{2}{\rm d}\tau \leq C(1+t)^{\epsilon_{0}}.	
  \end{equation}
Hence, from \eqref{3.130}-\eqref{3.140}, we can immediately obtain the desired estimates \eqref{3.128}-\eqref{3.129}. The proof of Lemma \ref{L3.9} is completed.
\end{proof}
Combining Lemma \ref{L3.8} with Lemma \ref{L3.9}, one can immediately obtain \eqref{1.28}-\eqref{1.29a} in our main Theorem \ref{Thm1}.
\end{proof}

\vspace{6mm}

 \noindent {\bf Acknowledgements:} The research was supported by the National Natural Science Foundation of China $\#$12171160, 11771150, 11831003 and Guangdong Basic and Applied Basic Research Foundation $\#$2020B1515310015.
\bigbreak

%
%
%
%\vspace{8mm}
{\small

\bibliographystyle{plain}

}

\end{document}